\renewcommand\nomgroup[1]{%
	\item[\bfseries
	\ifstrequal{#1}{M}{Matrices}{%
		\ifstrequal{#1}{V}{Vectors}{%
			\ifstrequal{#1}{Sets}{Sets}{}}}%
	]}
\def\BState{\State\hskip-\ALG@thistlm}
\colorlet{darkred}{red!80!black}
\colorlet{darkgreen}{green!60!black}
\colorlet{darkblue}{blue!80!black}
\colorlet{darkorange}{orange!70!black}
\definecolor{Green}{cmyk}{1, 0.2, 0.4, 0.1}
\definecolor{Orange}{cmyk}{0, 0.61, 0.87, 0}
\definecolor{Purple}{rgb}{0.75, 0.0, 1.0}
\definecolor{purple}{rgb}{0.5,0,0.5}
\definecolor{dgreen}{rgb}{0.1,0.9,0.5}
\definecolor{gabysgreen}{cmyk}{0.80, 0.1, 0.90, 0}
\newcommand{\blue}[1]{{\color{black}#1}}
\newcommand{\irg}[2]{[#1\!:\!#2]}
\newcommand{\inirgs}{i \in \irg{1}{S}}
\newcommand{\R}{\mathbb{R}}						%real numbers
\newcommand{\N}{\mathbb{N}}						%natural numbers
\newcommand{\F}{\FF}					%feasible set
\newcommand{\CPP}{\mathcal{CPP}}
\newcommand{\CPI}{\mathcal{CPI}}				%completely positive partial intersection matrices
\newcommand{\CPS}{\mathcal{CPS}}				%completely positive partial sum
\newcommand{\CBC}{\mathcal{CBC}}				%completely positive block clique
\newcommand{\DNN}{\mathcal{DNN}}
\newcommand{\CMP}{\mathcal{CMP}}				%CompletelyPositive Completable Components
\newcommand{\x}{\vc{x}}							%bold vector x
\newcommand{\lrbr}[1]{\left\lbrace #1 \right\rbrace} %Left and Right braces
\newcommand\gl{\lambda}
\def\x{\mathbf x}
\def\aa{\mathbf a}
\def\y{\mathbf y}
\def\z{\mathbf z}
\def\v{\mathbf v}
\def\w{\mathbf w}
\def\f{\mathbf f}
\def\oo{\mathbf o}
\def\e{\mathbf e}
\def\cc{\mathbf c}
\def\g{\mathbf g}
\def\r{{\mathbf r}}
\def\AA{{\mathcal A}}
\newcommand\CC{{\mathcal C}}
\newcommand\EE{{\mathcal E}}
\newcommand\FF{{\mathcal F}}
\newcommand\II{{\mathcal I}}
\newcommand\KK{{\mathcal K}}
\newcommand\NN{{\mathcal N}}
\newcommand\OO{{\mathcal O}}
\def\SS{{\mathcal S}}
\newcommand\VV{{\mathcal V}}
\newcommand\Ab{{\mathsf{A}}}
\newcommand\Bb{{\mathsf B}}
\newcommand\Cb{{\mathsf C}}
\newcommand\Fb{{\mathsf F}}
\newcommand\Gb{{\mathsf G}}
\newcommand\Hb{{\mathsf H}}
\newcommand\Ib{{\mathsf I}}
\newcommand\Mb{{\mathsf M}}
\newcommand\Ob{{\mathsf O}}
\newcommand\Qb{{\mathsf Q}}
\newcommand\Wb{{\mathsf W}}
\newcommand\Xb{{\mathsf X}}
\newcommand\Yb{{\mathsf Y}}
\newcommand\Zb{{\mathsf Z}}
\newcommand{\scp}[2]{\langle #1, #2 \rangle}						% scalar product || other option: {\langle #1, #2 \rangle}
\newcommand{\tr}[1]{\mathrm{Tr}(#1)}								% trace
\newcommand{\T}{^\mathsf{T}}	
\DeclareMathOperator{\conv}{conv}									% convex hull
\DeclareMathOperator{\diag}{diag}									% diagonalize matrix to vector
\def\beq#1{$$}														% begin equation
\def\eps{\varepsilon}
\def\bea#1{\begin{array}{#1}}
	\def\ea{\end{array}}
\def\ignore#1{}
\theoremstyle{plain}
\newtheorem{thm}{Theorem} % [section]
\newtheorem{exmp}{Example} %[section]
\newtheorem{lem}[thm]{Lemma}
\newtheorem{rem}{Remark}
\theoremstyle{definition}
\theoremstyle{remark}
\begin{document}

	%TITLE TR
	\title{Sparse Conic Reformulation of Structured QCQPs based on Copositive Optimization with Applications in Stochastic Optimization %\thanks{Supported by the
%Vienna Science and Technology Fund (project ICT15-014), and the
%Austrian Science Fund (project DK W 1260-N35).}
}
	
	\author{Markus Gabl} %\thanks{markus.gabl@univie.ac.at}}
	% %
	\affil{IOR, Karlsruhe Institute of Technology, Germany.\\ \texttt{markus.gabl@kit.edu}\\
%\texttt{+43-1-4277-[38652|838672]}
   }
	\renewcommand\Authands{ and }
	\maketitle

	\begin{abstract}
		Recently, Bomze et.\ al.\ introduced a sparse conic relaxation of the scenario problem of a two stage stochastic version of the standard quadratic optimization problem. When compared numerically to Burer's classical reformulation, the authors showed that there seems to be almost no difference in terms of solution quality, whereas the solution time can differ by orders of magnitudes. While the authors did find a very limited special case, for which Burer's reformulation and their relaxation are equivalent, no satisfying explanation for the high quality of their bound was given. This article aims at shedding more light on this phenomenon and give a more thorough theoretical account of its inner workings. We argue that the quality of the outer approximation cannot be explained by traditional results on sparse conic relaxations based on positive semidenifnite or completely positive matrix completion, which require certain sparsity patterns characterized by chordal and block clique graphs respectively, and put certain restrictions on the type of conic constraint they seek to sparsify. In an effort to develop an alternative approach, we will provide a new type of convex reformulation of a large class of stochastic quadratically constrained quadratic optimization problems that is similar to Burer's reformulation, but lifts the variables into a comparatively lower dimensional space. The reformulation rests on a generalization of the set-completely positive matrix cone. This cone can then be approximated via inner and outer approximations in order to obtain upper and lower bounds, which potentially close the optimality gap, and hence can give a certificate of exactness for these sparse reformulations outside of traditional, known sufficient conditions. Finally, we provide some numerical experiments, where we asses the quality of the inner and outer approximations,  thereby showing that the approximations may indeed close the optimality gap in interesting cases.  
		
		\textbf{Keywords:} Quadratic Optimization $\cdot$ Copositive Optimization$\cdot$ Matrix Completion $\cdot$ Conic Optimization

	\end{abstract}	
	\pagebreak	
	 
	\section{Introduction}\label{chap:ReducingtheSizeofConicReformulations} 
	Recently, in \cite{bomze_two-stage_2022}, the authors considered the scenario problem of a two-stage stochastic version of the standard quadratic optimization problem given by 
	\begin{align}\label{eqn:St3QP} \tag{2St3QP}
		\min_{\x\in\R^{n_1},\y_i\in\R^{n_2}} \lrbr{\x\T\Ab\x+ \sum_{i=1}^{S}p_i \left(\x\T\Bb_i\y_i + \y_i\T\Cb_i\y_i\right) \colon (\x,\y_i)\in \Delta, \ \inirgs},	
	\end{align}
	where $\Delta\subset\R^{n_1+n_2}$ is the unit simplex, and $p_i,\ \inirgs$ are probabilities of certain scenarios occurring. This optimization problem can be exactly reformulated into a copositive optimization problem based on Burer's reformulation presented in \cite{burer_copositive_2009}. The reformulation forces a lifting of the space of variables into a space of dimension $O((n_1+Sn_2)^2)$, which makes this reformulation entirely impractical for the purposes of stochastic optimization since the number of scenarios $S$ is typically very high and the copostive optimization problem has to be approximated with semidefinite optimization problems, which are known to scale poorly. In an effort to circumvent this issue, the authors introduced a copositive relaxation that merely requires $O(S(n_1+n_2)^2)$ variables and showed empirically that the approximating SDPs are practical even if the number of scenarios is high. Somewhat surprisingly, they observed that the quality of the solutions they found did not substantially differ from the bound obtained by employing the traditional copositive reformulation. In fact they state that the difference was small enough to possibly be an artifact of numerical inaccuracies of the sdp-solver. Aside from an exactness result for a niche case of (\ref{eqn:St3QP}) no theoretical explanation for this phenomenon was provided. The present article is chiefly motivated by the question: why does the cheap relaxation perform so well? While we were not able to fully answer this question, we are still able to provide valuable theoretical insights that amount to a novel, practical approach to sparse conic reformulations. In short we introduce a generalization of the set-completely positive matrix cones that yield conic relaxations that are sparse to begin with, and which can, much like the traditional set-completely positive matrix cones, be approximated in order to generate lower and upper bounds, that may certify optimality in case the gap between them is zero.   
	
	To set up our exposition, we will now introduce a more general quadratic optimization problem and discuss some important context, specifically copositive optimization and sparse conic reformulations based on matrix completion. To begin with, the class optimization problems in question is given by: 
	\begin{align}\label{eqn:DecomposableQuadraticProblem}
		\begin{split}
			\min_{\x,\y_i} \x\T\Ab\x+ \aa\T\x &+ \sum_{i=1}^{S} \left[\x\T\Bb_i\y_i + \y_i\T\Cb_i\y_i + \cc_i\T\y_i\right]\\
			\mathrm{s.t.:}\  \Fb_i\x+\Gb_i\y_i &= \r_i, \quad  \hspace{0.06cm} i\in\irg{1}{S},\\
			Q_{j}(\x,\y_1,\dots,\y_S) &= 0, \quad \hspace{0.1cm} j\in\irg{1}{K},\\
			\x&\in \KK_0,\\
			\y_i&\in\KK_i ,\quad i\in\irg{1}{S},
		\end{split}
	\end{align}  
	where $\KK_0\subseteq\R^{n_1},\ \KK_i\subseteq\R^{n_2},\ i \in\irg{1}{S}$, are closed, convex cones, $\Ab\in\SS^{n_1}$ (i.e.\ symmetric matrices of order $n_1$), $\ \aa\in\R^{n_1}$, $\Bb_i\in \R^{n_1\times n_2},\ \Cb_i\in\SS^{n_2},\cc_i\in\R^{n_2},\ i \in \irg{1}{S}$ and $\Fb_i\in\R^{m_i\times n_1},\ \Gb_i\in\R^{m_i\times n_2},\ \r_i\in\R^{m_i} ,\ i \in\irg{1}{S}$. Further, $Q_{j}(\cdot)\colon \R^{n_1+Sn_2} \rightarrow \R,\ j\in\irg{1}{K}$ are quadratic functions that do not involve bilinear terms between $\y_i$ and $\y_j$ for $i\neq j$. The special structure in place here is that $\y_i$ does not interact with $\y_j$ in a bilinear fashion in neither the constraints nor the objective, and the statement stays true even if the linear constraints are squared.   
	
	This setup encompasses not only (\ref{eqn:St3QP}), but general two-stage stochastic conic QCQPs over finitely supported distributions, which are important since they are used to approximate two-stage stochastic conic QCQPs with infinite support. In the context of two-stage stochastic optimization, $S$ would be the number of scenarios and $\y_i$ would be variables specific to scenario $i$. Hence, the special structure in (\ref{eqn:DecomposableQuadraticProblem}) is native to all two-stage stochastic QCQPs regardless of the structure of the nominal QCQP. %\red{We will hint at another possible application of this model later in the text}

	Under some well known regularity conditions on the functions $Q_{i}(.)$ (see \cite{burer_copositive_2012,kim_geometrical_2020,eichfelder_set-semidefinite_2013}) our structured QCQP can be reformulated into a conic optimization problem with $\OO\left((n_1+Sn_2)^2\right)$ variables. This reformulation takes the form: 
	\begin{align}\label{eqn:DecomposableQCQPBurer}
		\begin{split}
			\min_{\Xb,\Yb_i,\Zb_i,\x,\y_i} \tr{\Ab_i\Xb }+ \aa\T\x&+\sum_{i=1}^{S} \left[\tr{\Bb_i\Zb_i}+ \tr{\Cb_i\Yb_{i,i}} +\cc_i\T\y\right]\\
			\mathrm{s.t.:}\  \Fb_i\x+\Gb_i\y_i &= \r_i, \quad \hspace{0.7cm} i\in\irg{1}{S},\\
			\diag\left(
			\begin{pmatrix}
				\Fb_i & \Gb_i
			\end{pmatrix}
			\begin{pmatrix}
				\Xb & \Zb_i\T\\\Zb_i & \Yb_i
			\end{pmatrix}
			\begin{pmatrix}
				\Fb_i\T\\ \Gb_i\T
			\end{pmatrix}\right) &= \r_i\circ\r_i, \quad i\in\irg{1}{S},\\
			\hat{Q}_{j}(\x,\Xb,\y_1,\Zb_1,\Yb_1,\dots,\y_S,\Zb_S,\Yb_S) &= 0, \quad \hspace{0.62cm} \ j\in\irg{1}{K},\\
			\begin{pmatrix}
				1 & \x\T &\y_1\T & \dots & \y_S\T \\
				\x& \Xb & \Zb_1\T & \dots & \Zb_S\T \\
				\y_1&\Zb_1 & \Yb_{1,1} & \dots & \Yb_{1,S}\\
				\vdots&\vdots&\vdots&\ddots & \vdots \\
				\y_S & \Zb_S & \Yb_{S,1}&\dots & \Yb_{S,S}
			\end{pmatrix}   &\in \CPP(\R_+\times_{i=0}^S\KK_i).
		\end{split}
	\end{align}
	 for appropriate linear functions $\hat{Q}_{ij}$ with $\hat{Q}_{ij}(\x,\x\x\T,\y_1,\y_1\x\T,\y_1\y_1\T,\dots,\y_S,\y_S\x\T,\y_S\y_S\T) = Q_{ij}(\x,\y_1,\dots,\y_S)$, with $\circ$ denoting the elementwise multiplication of vectors, and the set-completely positive matrix cone is define as 
	\blue{
	\begin{align*} 
		\CPP_n(\KK) &\coloneqq \left\lbrace \Xb \colon \Xb = \sum_{i=1}^{k}\x_i\x_i\T,  \x_i \in \KK,\ i\in \irg{1}{k}, \ k\in\N   \right\rbrace\\ &= \mathrm{clconv}\left\lbrace \x\x\T \colon \x \in \KK \right\rbrace = \{\Xb\Xb\T \colon \colon  \Xb\in \R^{n\times k}\, , \,  \x_i \in \KK, \, i \in\irg{1}{k},\ k\in\N  \},
	\end{align*}}
	for a closed, convex cone $\KK\subseteq\R^n$. For example, $\CPP(\R^n)$ is the positive semidefinite matrix cone, denoted by $\SS^n_+$, and  $\CPP(\R^n_+)$ is the classical completely positive matrix cone, extensively discussed in \cite{berman_completely_2003}. In the literature, optimization over the set-completely positive cone and its dual, the set-copositive matrix cone is colloquially referred to as copositive optimization. In general, set-completely positive matrix cones are intractable and have to be approximated. For example, it is well known that 
	\begin{align*}
		\CPP(\R^n_+) \subseteq \DNN^n \coloneqq \SS^n_+\cap\NN^n,
	\end{align*}
	where $\NN^n$ is the cone of nonnegative $n\times n$ matrices and $\DNN^n$ is called the doubly nonnegative matrix cone. \blue{In the above set containment, equality holds whenever $n\leq 4$.}
	
	While many tractable approximations do exist, be it based on positive semidefinite, second order cone or linear programming constraints, they all have in common that their complexity increases exponentially with the approximations quality. Even simple approximations, such as $\DNN^n$, typically involve semidefinite constraints of the same order as the set-completely positive constraint. As a result the above reformulation is often impractical. Especially in the context of stochastic optimization, where the number of scenarios $S$ is typically very high, the size of the psd-constraints, which is of the order $\OO\left((n_1+Sn_2)^2\right)$, becomes prohibitive. 
	
	Following the basic idea of the authors in \cite{bomze_two-stage_2022}, we can obtain a lower dimensional relaxation by replacing the conic constraint by $S$ smaller conic constraints given by
	\begin{align}\label{eqn:SmallerConicConstraints}
		\begin{pmatrix}
			1 & \x\T &\y_i\T \\
			\x& \Xb & \Zb_i\T  \\
			\y_i & \Zb_i & \Yb_{i}
		\end{pmatrix}  \in \CPP\left(\R_+\times\KK_i\right), \ \inirgs,
	\end{align}
	 \blue{where $\Yb_{i,i}$ was relabeled $\Yb_i$ and $\Yb_{i,j}, \ i \neq j$ were dropped from the model}, so that the number of variables is now $\OO\left(S(n_1+n_2)^2\right)$, therefore linear in $S$.
	 The cost we have to pay is that the resulting optimization problem is not necessarily equivalent to (\ref{eqn:DecomposableQCQPBurer}) and hence to (\ref{eqn:DecomposableQuadraticProblem}), as the conic constraints are clearly a relaxation of the conic constraint of the exact reformulation. It is, however, this relaxation which performed so inexplicably well when it was applied in \cite{bomze_two-stage_2022}. 
	
	In searching for an explanation of this performance, one may turn to the literature on sparse reformulations of conic optimization problems. Results in this field are typically based on theorems on matrix completion. The central question in that area is, when a given matrix with non-specified entries, so called partial matrices, can be completed to a matrix in $\CPP(\KK)$. This is useful in the context of solving a conic optimization problem: if the problem data is sparse so that some of the entries of the matrix variable only appear in the conic constraint, one can check if the removal of that entries leaves a partial matrix for which one can give sufficient conditions so that it is completable to a matrix that fulfills the original conic constraint. If such conditions are available, they replace the conic constraint and the spurious entries of the matrix can be dropped entirely. The result is what we will call a sparse reformulation of the original conic problem. These reformulations can reduce the number of variables substantially, which eases the computational burden so that otherwise unmanageable problems become viable. 
	
	The classical text on such an approach is \cite{vandenberghe_chordal_2015}, where the conic constraint to be massaged is an sdp-constraint. Their approach utilizes the fact that a partial matrix, where the non-specified entries exhibit the so called chordal sparsity pattern can always be completed to a psd-matrix provided all fully specified, principle  submatrices are positive semidefinite. The framework was applied in various contexts such as robust optimization \cite{padmanabhan_exploiting_2021} or optimal power flow \cite{fan_achieving_2018,sliwak_improving_2019}. \blue{In \cite{zheng_chordal_2020} the authors harnessed sparsifications of SDPs computationally by exploiting properties of the ADMM algorithm}. Another approach was recently put forward by \cite{kim_doubly_2020}, who applied classical $\CPP(\R^n_+)$-completion results derived in \cite{drew_completely_1998} in the context of copositive optimization. Their approach necessitates the presence of so called block-clique sparsity patterns in the problem data, owing to the fact that partial matrices with block-clique specification pattern can be completed to matrices in $\CPP(\R^n_+)$ whenever the fully specified, principle submatrices are completely positive. \blue{On a related note, \cite{bettiol_mining_2022} explored sparse convex reformulations of binary quadratic optimization problems based on matrix completion where the set of matrices of interest was not some version of $\CPP$ but the so called Boolean Quadratic Polytope, i.e.\ the convex hull of matrices $\x\x\T$ where $\x\in\lrbr{0,1}^n$. Interestingly, completion results for this subset of $\CPP(\R^n_+)$ seem to be stronger than for general matrices in this cone, which bolsters our hope of finding stronger completion results for other such subsets as well.}
	
	Unfortunately, none of these results are able to explain the phenomenon we seek to investigate and we will spend a full section on discussing their shortcomings and what we can still learn from them about our object of interest. We will argue that, unless $\KK = \R^n$, the required sparsity patterns are, outside of some limited special cases, not the ones present in (\ref{eqn:DecomposableQCQPBurer}), where the sparsity pattern takes the form of an arrow-head. Also, in cases where $\KK$ is neither the positive orthant nor the full space, completion results are, to the best of our knowledge, entirely absent from literature.

	\subsubsection*{Contribution}
	In an effort to remedy these shortcomings, we propose a new approach to sparse conic reformulations. Rather than treating completability of a matrix as an abstract concept we identify a cone that is isomorphic to the cone of completeable partial matrices with arrowhead sparsity pattern, denoted $\CMP$, as a generalization of the set-completely positive matrix cone. We show that the geometry of this cone can be used in order to derive a lower dimensional alternative to the exact reformulation (\ref{eqn:DecomposableQCQPBurer}). Much the same way one uses inner and outer approximations in order to solve copositive optimization problems, we derive inner and outer approximations of $\CMP$ in order to obtain upper and lower bounds to this new conic optimization problem. Numerical experiments show that in practice these approximations exhibit interesting beneficial properties. 
	
	\subsubsection*{Outline}
	The rest of the article is organized as follows: In \cref{sec: Classical approaches to sparse conic optimization and why they fail} we will give a short discussion on existing approaches to sparse conic optimization and discuss the limitations that ultimately make these techniques unfit to tackle sparse reformulations of (\ref{eqn:DecomposableQCQPBurer}). Hence, we develop an alternative approach in \cref{sec:An alternative approach to sparse reformulations}, based on the aforementioned convex cone $\CMP$. This new type of convex reformulation motivates a strategy to sparse optimization that is analogous to classical copositive optimization techniques, where difficult conic constraints are approximated via inner and outer approximations. In \cref{sec:Inner and  outer approximation of CMP based on set-completely positive matrix cones} we present many such approximations and discuss their limitations. Finally, we asses the efficacy of our approach in extensive numerical experiments.

%	\begin{itemize}
%		\item We introduce an new type of sparse convex relaxation that involves a generalization of the set-completely positive matrix cone that is called the cone completely positive, completable, connected components ($\CMP$), which lifts the variables into space with dimesion of $\OO\left(S(n_1+n_2)^2\right)$. 
%		\item We give a geometric argument to proof that this relaxation is in fact exact. The argument is a new way of applying the theory laid out in \cite{kim_geometrical_2020}. 
%		\item Since we cannot optimize over the $\CMP$ directly we provide an inner and outer approximations both in terms of $\CPP(\KK)$. Thus, all the knowledge we have on the latter cone can be directly applied to our new reformulation. The outer approximation is in fact equivalent to the sparse relaxation proposed in \cite{bomze_two-stage_2022}, the inner approximation is entirely new.   
%		\item We show that traditional arguments for establishing a sparse reformulation cannot be applied to establish equivalence between (\ref{eqn:DecomposableQCQPBurer}) and the our proposed outer approximation reformulation outside of niche special cases.
%		\item We provide numerical experiments that \red{werden wir sehen}
%	\end{itemize}

	\subsection*{Notation}
	Throughout the paper matrices are denoted by sans-serif capital letters (e.g.\ $\Ob$ will denote the zero matrix, where the size will be clear from the context), vectors by boldface lower case letters (e.g.\ $\oo$ will denote the zero vector, $\e_i$ will denote a vector of zeros with a one at the $i$-th coordinate) and scalars (real numbers) by simple lower case letters. Sets will be denoted using calligraphic letters, e.g., cones will often be denoted by $\KK$. 
	We use $\SS^n$ to indicate the set of symmetric matrices and  $\SS^n_+$/$\SS^n_-$ for the sets of positive-/negative-semidefinite symmetric matrices, respectively. Moreover, we use $\NN_n$ to denote the set of entrywise nonnegative, symmetric matrices. We also use the shorthand notation $\irg{l}{k}\coloneqq \lrbr{l,l+1,\dots,k-1,k}\subseteq \N$. For a given set $\AA$ we denote its convex hull by $\mathrm{conv}(\AA)$. For a convex set $\CC$, the set of generators of its extreme rays and points is given by $\mathrm{ext}(\CC)$. \blue{Also, for a cone $\KK\subseteq\R^n$ we denote as 
	\begin{align*}
		\KK^*\coloneqq \lrbr{\x\in\R^n\colon \y\T\x \geq 0, \ \forall \y \in \KK },
	\end{align*}
	its dual cone.} We also make use of the Frobenius product of two appropriately sized matrices $\Ab$ and $\Bb$ defined as $\Ab\bullet\Bb \coloneqq \mathrm{trace}(\Ab\T\Bb)$, which can be interpreted as the sum of the inner products of the columns of $\Ab$ and $\Bb$.

 	\section{Classical approaches to sparse conic optimization and why they fail}\label{sec: Classical approaches to sparse conic optimization and why they fail}
 	
 	As stated in the introduction, there are already many approaches for utilizing sparsity patterns in conic optimization problems. At the core of these results lie matrix completion theorems, which we will discuss shortly. But in order to state them we must introduce some essential terms first. 
 	
 	A graph $G =(\VV,\EE)$ is given by its set of vertices $\VV =  \lrbr{v_1,\dots,v_n}$ and its set of edges
 	$\EE\subseteq\lrbr{\lrbr{v,u}\colon v,u\in\VV}$,
 	both of which are finite. A subgraph $T = (\VV_T,\EE_T)$ of a graph $G$ is a graph such that $\VV_T\subseteq\VV$ and $\EE_T\subseteq \EE$. Vertex $v_j$ is \textit{adjacent} to $v_j$ and vice versa if $\lrbr{v_i,v_j} \in \EE$. If $e= \{v_i,v_j\}\in\EE$ then $v_i$ and $v_j$ are \textit{incident} on $e$. A graph where all vertices are adjacent to one another is called a \textit{complete} graph. A \textit{path} that connects vertex $v_i$ with $v_j$ is given by a sequence of edges so that \blue{$\lrbr{\{v_i,v_{k_1}\},\dots,\{v_{k_{p-1}},v_j\}}\subseteq\EE$, $v_{k_i}$} are distinct and $p>1$ is the \textit{length} of that path. A graph is \textit{connected} if any two vertices have a connecting path. A graph that is not connected is \textit{disconnected}. A \textit{cycle} is path that connects a vertex $v$ to itself. A \textit{chord} of a cycle with length greater than 3 is an edge that connects two vertexes who are incident on two different edges of the cycle. A graph is \textit{chordal} if every cycle with length greater 3 has a chord. \blue{A collection of vertices in $\VV$ that induce a subgraph of $G$ that is complete is called a \textit{clique}}. A \textit{block} $B$ of a graph is a subgraph  that is connected, has no disconnected subgraph that is obtained by removing just one vertex and its adjacent edges (i.e.\ a \textit{cut vertex}) from $B$, and is not contained in any other subgraph with these two properties. A \textit{block-clique graph} is a  graph whose blocks are cliques. 
 	
 	A \textit{partial matrix} of order $n$ is a matrix whose entries in the $i$-th row and the $j$-th column are determined if and only if $(i,j)\in\II\subseteq \irg{1}{n}^2$ and are undetermined otherwise. A matrix is said to be \textit{partial positive semidefinite/ completely positive/ doubly nonnegative} if and only if every fully determined principal submatrix is  positive semidefinite/ completely positive/ doubly nonnegative. A partial matrix is \textit{positive semidefinite/ completely positive/ doubly nonnegative completable} if we can specify the undetermined entries so that the fully specified matrix is semidefinite/ completely positive/ doubly nonnegative.
 	
 	The \textit{specification graph} of partial matrix $\Ab$ of order $n$ is a graph $G(\Ab)$ with vertices $\VV = \lrbr{ v_i\colon i\in \irg{1}{n}}$ and edges $\EE$ such that $\lrbr{v_i,v_j}\in\EE$ if and only if the entry $a_{ij}$ is specified. A symmetric matrix with $G(\Ab) = G$  is called a \textit{symmetric matrix realization} of $G$. 
 	
 	The following three theorems give the key results on matrix completion as far as this text is concerned:
 	
 	\begin{thm}\label{thm:SDPCompletionChordalGraphs}
 		All partial positive semidefinite symmetric matrix realizations of a graph $G$ are positive semidefinite completable if and only if $G$ is chordal. 
 	\end{thm}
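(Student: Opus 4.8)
The plan is to prove both implications. This is the classical positive semidefinite completion theorem, and the two directions have very different characters: the necessity (``all realizations completable $\Rightarrow$ $G$ chordal'') is handled by a single counterexample, while the sufficiency (``$G$ chordal $\Rightarrow$ all realizations completable'') carries the real content and proceeds by an elimination/induction argument.

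For necessity I would argue by contraposition. Assuming $G$ is not chordal, it contains an induced chordless cycle $Z$ on vertices $v_1,\dots,v_k$ with $k\ge 4$, and I construct a partial PSD realization of $G$ with no completion. Place ones on the diagonal, the values $1,1,\dots,1,-1$ on the $k$ edges of $Z$ (a single $-1$, so that the product of signs around $Z$ equals $-1$), and zeros on every other specified off-diagonal entry. The one point to verify is that every fully specified principal submatrix is PSD: such a block sits on a clique of $G$, and because $Z$ is induced and chordless a clique contains at most two cycle-vertices and only if they are cycle-adjacent, so each fully specified block is a direct sum of an identity and a $2\times 2$ block $\left(\begin{smallmatrix}1&\pm1\\\pm1&1\end{smallmatrix}\right)$, hence PSD. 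Any PSD completion $\hat\Ab$ would restrict on $Z$ to a Gram matrix of unit vectors $w_1,\dots,w_k$ with $w_{i+1}=\pm w_i$ along the cycle; chaining these equalities around $Z$ and using that the product of the signs is $-1$ forces $w_1=0$, contradicting $\|w_1\|=1$.

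For sufficiency I would first establish a single-entry completion lemma: a partial PSD matrix missing exactly one symmetric pair $(i,j)$ is completable. Writing the remaining indices as $R$ and first assuming $\Ab_{RR}\succ 0$, the Schur complement of $\Ab_{RR}$ reduces PSD-ness of the whole matrix to a $2\times 2$ condition whose two diagonal entries are nonnegative --- this is precisely the PSD-ness of the specified blocks on $R\cup\{i\}$ and $R\cup\{j\}$ --- so choosing the missing entry to annihilate the off-diagonal of that Schur complement yields a PSD completion; the singular case $\Ab_{RR}\succeq 0$ follows by a pseudoinverse/limiting argument. I then globalize using chordality: a non-complete chordal graph has a minimal separator $C$ that is a clique, splitting the vertices as $V_1\cup C\cup V_2$ with no specified entries between $V_1$ and $V_2$. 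Filling that whole cross block by the rank-respecting formula $\Ab_{V_1 C}\Ab_{CC}^{+}\Ab_{C V_2}$ and proving, via a Gram-matrix gluing argument, that PSD-ness of the two halves sharing the common PSD block $\Ab_{CC}$ forces the glued matrix to be PSD, one reduces to completing the (chordal, smaller) induced partial matrices on $G[V_1\cup C]$ and $G[V_2\cup C]$, whose remaining unspecified entries are disjoint and avoid the already-fixed block; recursion finishes the proof.

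The main obstacle I anticipate is the globalization rather than the local completion: one must organize the filling so that fixing one block never later spoils the PSD-ness of a block that becomes fully specified, which is exactly what the clique-separator decomposition and the gluing lemma are designed to guarantee. I expect the two delicate technical points to be the singular-$\Ab_{CC}$ pseudoinverse step in the gluing lemma and checking that the recursion on the two sides introduces no conflicting specifications, both of which should be routine once the decomposition is set up.
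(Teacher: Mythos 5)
The paper does not actually prove this statement: it is invoked as a known classical result (the Grone--Johnson--S\'a--Wolkowicz positive semidefinite completion theorem) with a pointer to Theorem~1.39 of Berman and Shaked-Monderer, so there is no internal argument to compare against; your proposal is a self-contained reconstruction of the classical proof, and it is essentially correct. The necessity direction via the signed chordless cycle (all diagonal entries $1$, edge values $1,\dots,1,-1$ around the cycle, zeros elsewhere) is exactly right: chordlessness guarantees that every fully specified principal submatrix is, up to permutation, an identity plus at most one $\pm 1$ off-diagonal pair, and the Cauchy--Schwarz equality case chained around the cycle forces $w_1=-w_1$ in any would-be completion. The sufficiency direction via a clique minimal separator (Dirac's theorem for chordal graphs), recursion on the induced subgraphs $G[V_1\cup C]$ and $G[V_2\cup C]$, and the PSD gluing lemma with cross block $\Ab_{V_1C}\Ab_{CC}^{+}\Ab_{CV_2}$ is likewise a standard and valid route. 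Two small repairs are worth making. First, your single-entry lemma is just the special case $V_1=\lrbr{i}$, $V_2=\lrbr{j}$ of the gluing lemma, so it becomes redundant once the gluing argument is in place. Second, as literally written the globalization runs in the wrong order: the formula $\Ab_{V_1C}\Ab_{CC}^{+}\Ab_{CV_2}$ cannot be evaluated before the two halves are completed, because $\Ab_{V_1C}$ may itself contain unspecified entries (only $\Ab_{CC}$ is guaranteed fully specified, $C$ being a clique). The recursion on the two halves must come first --- they cannot conflict, since they overlap only in the fully specified block $\Ab_{CC}$, which completion never alters --- and only then is the cross block filled from the completed halves and the gluing lemma applied. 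This is a fix of presentation, not a genuine gap; all the needed ideas are present in your proposal.
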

 	\begin{proof}
 		See \cite[Theorem 1.39]{berman_completely_2003}. 
 	\end{proof}
 	\begin{thm}\label{thm:CPPCompletionBlockCliqueGraphs}
 		Every partial completely positive matrix realization of a graph	$G$ is completely positive completable if and only if G is a block-clique graph
 	\end{thm}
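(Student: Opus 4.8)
The plan is to prove both implications by relating the block structure of $G$ to the existence (or obstruction) of completely positive completions, following the classical argument of \cite{drew_completely_1998}. Throughout I assume, as is standard in matrix completion, that every diagonal entry is specified, and I use the structural characterization that $G$ is block-clique if and only if it is chordal and contains no induced \emph{diamond} ($K_4$ minus an edge); equivalently, $G$ contains no induced cycle $C_k$ with $k\ge 4$ and no induced diamond.

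For the sufficiency direction, I would exploit that a block-clique graph is assembled from its blocks---each of which is a complete graph---glued together along single cut vertices in a tree-like fashion (the block--cut-vertex tree). The heart of the argument is a \textbf{single-vertex gluing lemma}: if $A_1$ is completely positive on a vertex set $\VV_1$ and $A_2$ is completely positive on $\VV_2$ with $\VV_1\cap\VV_2=\{v\}$ and agreeing $(v,v)$-entries, then the partial matrix on $\VV_1\cup\VV_2$ whose only unspecified entries lie between $\VV_1\setminus\{v\}$ and $\VV_2\setminus\{v\}$ admits a completely positive completion. I would prove this constructively: writing $A_1=\sum_k \pvectwo{\pp_k}{\sigma_k}\pvectwo{\pp_k}{\sigma_k}\T$ and $A_2=\sum_\ell \pvectwo{\tau_\ell}{\q_\ell}\pvectwo{\tau_\ell}{\q_\ell}\T$ with nonnegative factors and common mass $\sum_k\sigma_k^2=\sum_\ell\tau_\ell^2=a_{vv}$, one couples the two factorizations through the shared coordinate. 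Taking any nonnegative coupling $\pi_{k\ell}$ with marginals $\sigma_k^2$ and $\tau_\ell^2$ and forming the factors $(\tfrac{\sqrt{\pi_{k\ell}}}{\sigma_k}\pp_k,\ \sqrt{\pi_{k\ell}},\ \tfrac{\sqrt{\pi_{k\ell}}}{\tau_\ell}\q_\ell)$ (treating zero-mass factors separately) yields a nonnegative factorization whose outer products reproduce $A_1$ and $A_2$ on the specified blocks while filling the cross block with the entrywise-nonnegative matrix $\sum_{k,\ell}\tfrac{\pi_{k\ell}}{\sigma_k\tau_\ell}\pp_k\q_\ell\T$; the degenerate case $a_{vv}=0$ forces the corresponding off-diagonal blocks to vanish and is immediate. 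With this lemma in hand I would induct over the block--cut-vertex tree: each leaf block is a clique, hence fully specified and completely positive by the partial-CP hypothesis, and it is attached to the rest of the graph through exactly one cut vertex, so the lemma completes the gluing; disconnected components are handled blockwise.

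For the necessity direction, I would argue the contrapositive: if $G$ is not block-clique it contains an induced $C_k$ ($k\ge4$) or an induced diamond, and in each case I would exhibit a partial completely positive matrix realization that is not completely positive completable. Since completability on $G$ restricts to completability of the induced partial matrix on any vertex subset, it suffices to build the obstruction on the offending induced subgraph and then pad the remaining specified entries of $G$ conservatively so as to keep the whole matrix partial completely positive. For the diamond $a,b,c,d$ (all edges present but $\{c,d\}$), I would rig the two triangles $\{a,b,c\}$ and $\{a,b,d\}$---each a fully specified completely positive $3\times3$ block---so that the nonnegative factorizations they force across the shared edge $\{a,b\}$ drive $c$ and $d$ into incompatible positions, making any nonnegative choice of the $(c,d)$-entry inconsistent with complete positivity of the $4\times4$ matrix. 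For the long induced cycle I would use the classical cycle obstruction: a partial matrix that is partial doubly nonnegative yet whose completions cannot simultaneously be positive semidefinite and entrywise nonnegative, and hence cannot be completely positive.

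I expect the \textbf{main obstacle} to be the necessity direction, specifically the quantitative verification for the diamond and for the long cycle that \emph{every} choice of the unspecified entries fails to be completely positive while \emph{all} fully specified principal submatrices remain completely positive. This is delicate because complete positivity, unlike positive semidefiniteness, is not governed by a purely local certificate, so the two requirements must be balanced simultaneously. The sufficiency direction, by contrast, is essentially self-contained once the coupling construction in the gluing lemma is established, the remaining work being a routine induction over the block decomposition.
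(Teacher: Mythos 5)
Your proposal is correct and follows essentially the same route as the paper, which does not prove this theorem in-house but cites \cite[Theorem 2.33]{berman_completely_2003} (the classical argument of \cite{drew_completely_1998}): sufficiency by gluing CP factorizations across single cut vertices along the block--cut-vertex tree, and necessity via partial-CP obstructions on induced cycles of length at least four and induced diamonds. Note in particular that your coupling construction, specialized to the product coupling $\pi_{k\ell}=\sigma_k^{2}\tau_\ell^{2}/a_{vv}$, is exactly the completion the paper itself re-derives later in its proof that $\CBC(\bar{\KK})\subseteq\CMP(\bar{\KK})$, and both of the obstructions you flag as the main obstacle are realizable (for the diamond, make one triangle singular of rank two so that the PSD-forced value of the $(c,d)$ entry is negative; for the cycle, chain all-ones edge blocks and close the cycle with a zero entry).
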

 	\begin{proof}
 		See \cite[Theorem 2.33]{berman_completely_2003}.
 	\end{proof}
 	\begin{thm}\label{thm:DNNCompletionBlockCliqueGraphs}
 		Every partial doubly nonnegative matrix realization of a graph $G$ is doubly nonnegative completable if and only if G is a block-clique graph
 	\end{thm}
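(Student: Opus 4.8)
The plan is to follow the template of \cref{thm:CPPCompletionBlockCliqueGraphs}, exploiting the observation that the extra entrywise‑nonnegativity constraint in $\DNN^n=\SS^n_+\cap\NN^n$ is met \emph{for free} by the canonical clique‑tree completion whenever the specified data are nonnegative; this is exactly what shrinks the admissible class of graphs from chordal (as in \cref{thm:SDPCompletionChordalGraphs}) to block‑clique. I would prove the two implications separately. For the $(\Leftarrow)$ direction the workhorse is a gluing lemma: if $P=\begin{pmatrix}\mathsf A&\b\\\b\T&c\end{pmatrix}$ and $R=\begin{pmatrix}c&\d\T\\\d&\mathsf E\end{pmatrix}$ are doubly nonnegative and share the single diagonal entry $c$, then
\[
\begin{pmatrix}\mathsf A&\b&\tfrac1c\b\d\T\\[0.15em]\b\T&c&\d\T\\[0.15em]\tfrac1c\d\b\T&\d&\mathsf E\end{pmatrix}\in\DNN ,
\]
with the convention that $c=0$ forces $\b=\d=\oo$ and a block‑diagonal, hence trivially DNN, matrix. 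The membership is immediate: the Schur complement of the central entry $c$ equals $\diag\!\left(\mathsf A-\tfrac1c\b\b\T,\ \mathsf E-\tfrac1c\d\d\T\right)$, which is positive semidefinite because $P,R\succeq\mathsf O$; and the only new block $\tfrac1c\b\d\T$ is entrywise nonnegative because $\b,\d\ge\oo$.

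With the lemma in hand I would induct on the number of blocks of $G$, which (without loss of generality) I take connected. Since any two blocks of a graph meet in at most one cut vertex and every block of a block‑clique graph is a clique, $G$ is a tree of cliques glued at cut vertices. If $G$ has a single block it is complete, the partial matrix is fully specified, and it is DNN by hypothesis. Otherwise pick a leaf block $C$ of the block‑cut tree, attached to the remainder $G'$ at a single cut vertex $v$. The restriction of the partial matrix to $G'$ is again a partial DNN realization of the block‑clique graph $G'$, so by induction it completes to a DNN matrix $M'$, while the clique $C$ contributes a fully specified DNN principal block $M_C$; the two overlap only in the entry $M_{vv}$. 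Applying the gluing lemma with $c=M_{vv}$ fills in precisely the entries between $C\setminus\{v\}$ and $G'\setminus\{v\}$, which are exactly the unspecified ones, and yields a DNN completion of the whole matrix.

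For the $(\Rightarrow)$ direction I argue by contraposition, using the standard fact that a connected graph is block‑clique if and only if it is chordal and contains no induced diamond $K_4-e$; hence a non‑block‑clique $G$ contains, as an induced subgraph, either a chordless cycle $C_k$ with $k\ge4$ or a diamond. On each I exhibit a partial DNN realization admitting no nonnegative positive‑semidefinite completion. For $C_k$ (vertices $1,\dots,k$, edges $\{i,i+1\}$ and $\{k,1\}$) I set all diagonals to $1$, set the edges $\{1,2\},\dots,\{k-2,k-1\}$ to $1$, and set the two remaining edges $\{k-1,k\}$ and $\{k,1\}$ to distinct values $s\ne t$ in $[0,1)$; each fully specified principal submatrix is a $1\times1$ or a $2\times2$ edge block and hence DNN, yet the unit entries force the Gram vectors $v_1=\dots=v_{k-1}$ in any positive‑semidefinite completion, whence $s=\langle v_{k-1},v_k\rangle=\langle v_1,v_k\rangle=t$, a contradiction. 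For the diamond (missing edge $\{1,2\}$) I take the Gram matrix of the planar unit vectors at angles $\tfrac\pi2,\ \gamma-\tfrac\pi2,\ 0,\ \gamma$ (for $v_1,\dots,v_4$ respectively, with $\gamma\in(0,\tfrac\pi2)$), namely
\[
\begin{pmatrix}1&\star&0&\sin\gamma\\\star&1&\sin\gamma&0\\0&\sin\gamma&1&\cos\gamma\\\sin\gamma&0&\cos\gamma&1\end{pmatrix},
\]
and show that the specified inner products with the linearly independent pair $v_3,v_4$ pin $v_1,v_2$ into the plane spanned by $v_3,v_4$ with unit norm, so every completion must take $\langle v_1,v_2\rangle=-\cos\gamma<0$; no nonnegative completion exists.

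Finally I would extend each local obstruction, living on a vertex set $W$, to a partial DNN realization of all of $G$ by assigning the value $0$ to every specified edge with an endpoint outside $W$ and $1$ to the remaining diagonals. Then any fully specified principal submatrix either lies inside $W$ (a clique of the obstruction, already DNN) or contains an outside vertex whose row and column are zero, reducing it block‑diagonally to a smaller DNN submatrix; hence the global matrix is partial DNN. Since $W$ is induced, the principal submatrix on $W$ of any DNN completion of $G$ would itself be a DNN completion of the obstruction, which is impossible, so $G$ admits no DNN completion. I expect the two delicate points to be, first, the forced‑negativity computation for the diamond, i.e.\ verifying $\|v_1^\parallel\|=1$ so that no higher‑dimensional completion can lift $\langle v_1,v_2\rangle$ to a nonnegative value, and second, the bookkeeping in the padding step that keeps every fully specified principal submatrix doubly nonnegative while preserving the embedded obstruction; by contrast the sufficiency direction is essentially forced once the gluing lemma is in place.
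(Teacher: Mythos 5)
Your proof is, as far as I can verify, correct — but note that the paper does not actually prove this statement at all: its entire proof is the citation ``See \cite{drew_completely_1998}.'' What you have written is therefore not an alternative to the paper's argument but a self-contained reconstruction of the classical one from the cited reference (Drew--Johnson), and it follows that route faithfully: sufficiency via gluing along the block-cut tree, necessity via the forbidden induced subgraphs of block-clique graphs (chordless cycles and the diamond). Your gluing lemma checks out — the Schur complement of the shared diagonal entry $c$ in the glued matrix is block diagonal with blocks $\Ab-\tfrac1c\b\b\T$ and $\Eb-\tfrac1c\d\d\T$, both PSD, and the only new block $\tfrac1c\b\d\T$ is nonnegative — and so does the diamond computation: solving for the projections of $v_1,v_2$ onto $\mathrm{span}(v_3,v_4)$ gives coefficients $(-\cot\gamma,\,1/\sin\gamma)$ and $(1/\sin\gamma,\,-\cot\gamma)$, whence $\norm{v_1^\parallel}=\norm{v_2^\parallel}=1$, so both vectors lie in that plane and every PSD completion has $(1,2)$-entry $-\cos\gamma<0$. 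The cycle obstruction (unit entries forcing equal Gram vectors, then $s=t$) and the zero-padding argument extending an obstruction on an induced subset $W$ to all of $G$ are also sound. Two minor points you should make explicit to let this stand as a complete proof: (i) the reduction to connected $G$ in both directions (disconnected block-clique graphs are completed block-diagonally with zero off-blocks, and the padding argument never uses connectivity); and (ii) the standing convention that diagonal entries of a partial matrix are always specified, which your constructions and the induction step (gluing along the single shared entry $M_{vv}$) silently assume, and which the paper's definition of the specification graph leaves implicit.
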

 	\begin{proof}
 		See \cite{drew_completely_1998}.  
 	\end{proof}

 	These theorems can be used in order to establish that a  constraint on a high-dimensional matrix, say $\Xb$, can be replaced by a number of constraints on certain principal submatrices of $\Xb$ without increasing the feasible set. This is achieved by showing that values for the submatrices of $\Xb$ that fulfill the latter constraints can be completed to a full evaluation of $\Xb$ that fulfills the original larger constraint. For the sake of illustration we present the following toy example.   
 	\begin{exmp}
 		Consider the optimization problem 
 		\begin{align}\label{eqn:ExmpProblem}
 			\min_{\Xb\in\SS^{n}_+} \lrbr{\Qb\bullet \Xb \colon \Bb\bullet \Xb = 1 },
 		\end{align}
 		where $(\Qb)_{ij} = (\Bb)_{ij} = 0$ if $|i-j|>1$, the remaining entries of $\Bb$ equal one and those of $\Qb$ are arbitrary. The entries of $\Xb$ that are outside the inner band of width 1 are not present in neither the equality constraint nor the objective. Consider the relaxation of (\ref{eqn:ExmpProblem}) where the psd-constraints is replaced by 
 		\begin{align}\label{eqn:ExmpConstraints}
 			\begin{pmatrix}
 				X_{ii} & X_{ij} \\ X_{ji} & X_{jj}
 			\end{pmatrix}\in \SS^2_+  \quad \forall (i,j) \colon |i-j| =  1,\ i<j,
 		\end{align}
 		and the entries of $\Xb$ outside of the inner band are dropped from the problem. Clearly, we obtain a relaxation of the original problem since the new condition is necessary for the $\Xb$ to be positive semidefinite. Also, the fact that we dropped entries of $\Xb$ can be thought of as a replacement of the matrix $\Xb$ by a partial matrix, say $\Xb_{*}$, whose entries outside the inner band are not specified. In this case the specification graph of $\Xb_{*}$ is easily checked to be chordal, as it doesn't contain any cycles at all. Hence, if all fully specified submatrices of  $\Xb_{*}$ are positive semidefinite, i.e. (\ref{eqn:ExmpConstraints}) holds, then it can be completed to positive semidefinite matrix by \cref{thm:SDPCompletionChordalGraphs}. The resulting matrix would be feasible for (\ref{eqn:ExmpProblem}) with the same objective function value, so that the relaxation turns out to be lossless.   
 	\end{exmp}

 	One may attempt to similarly derive a sparse reformulation of  (\ref{eqn:DecomposableQCQPCMPRefromulation}) by invoking the completion results we discussed above. This would necessitate to show that a partial matrix of the following form 
 	\begin{align*}
 		\hspace{-1cm}
 		\begin{pmatrix}
 			\Xb & \Zb_1\T & \Zb_2\T &\dots & \Zb_{S-1}\T & \Zb_S\T \\
 			\Zb_1 & \Yb_{1,1} & \mathbf{*} & \dots  & \mathbf{*}  & \mathbf{*}\\
 			\Zb_2 & \mathbf{*} & \Yb_{2,2}& \dots  & \mathbf{*}  & \mathbf{*} \\
 			\vdots&\vdots&\vdots&\ddots & \vdots & \vdots  \\
 			\Zb_{S-1}  & \mathbf{*}  & \mathbf{*} & \dots & \Yb_{S-1,S-1}  & \mathbf{*} \\
 			\Zb_S  & \mathbf{*}  & \mathbf{*}&\dots  & \mathbf{*} & \Yb_{S,S}
 		\end{pmatrix},\
 	\end{align*}
 	can be completed to a set-completely positive matrix whenever the submatrices 
 	\begin{align*}
 		\begin{pmatrix}
 			\Xb & \Zb_i\T  \\
 			\Zb_i & \Yb_{i,i}
 		\end{pmatrix}  \in \CPP\left(\KK_0\times\KK_i\right), \ \inirgs.
 	\end{align*}
 	Note, that this would coincide with the model in \cite{bomze_two-stage_2022}, which we discussed in the introduction, so that the matrix completion theory is a promising contender for the desired explanation for the effectiveness of the model. 
 	The strategy appears feasible at first, at least for the case where $\KK_i$ are nonnegative orthants given that in this case, completion results are readily available. Unfortunately it is futile, since the arrowhead structure is not block-clique outside of narrow special cases, as we will now show.

 	\begin{lem}\label{lem:ArrowHeadMatricesAreChordal}
 		Let $S>1$ and consider a partial matrix  where the specified entries exhibit an arrow-head structure, i.e. 
 		\begin{align*}
 			\hspace{-1cm}
 			\begin{pmatrix}
 				\Xb & \Zb_1\T & \Zb_2\T &\dots & \Zb_{S-1}\T & \Zb_S\T \\
 				\Zb_1 & \Yb_{1,1} & \mathbf{*} & \dots  & \mathbf{*}  & \mathbf{*}\\
 				\Zb_2 & \mathbf{*} & \Yb_{2,2}& \dots  & \mathbf{*}  & \mathbf{*} \\
 				\vdots&\vdots&\vdots&\ddots & \vdots & \vdots  \\
 				\Zb_{S-1}  & \mathbf{*}  & \mathbf{*} & \dots & \Yb_{S-1,S-1}  & \mathbf{*} \\
 				\Zb_S  & \mathbf{*}  & \mathbf{*}&\dots  & \mathbf{*} & \Yb_{S,S}
 			\end{pmatrix},\
 		\end{align*}
 		where $ \Xb\in\SS^{n_1},\ \Yb_{i,i}\in\SS^{n_2},\ \Zb_i\in\R^{n_2\times n_1}, \ i \in\irg{1}{S},$ and let $G_{spec}$ be its specification graph. Then $G_{spec}$ is chordal. If $n_1\in\lrbr{0,1}$ then $G_{spec}$ is also a block-clique graph, which is not the case otherwise. 
 	\end{lem}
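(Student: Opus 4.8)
The plan is to discard the matrix entirely and argue purely about the combinatorial structure of $G_{spec}$. First I would fix notation for the vertices: let $H$ be the $n_1$ vertices indexing the rows and columns of the head block $\Xb$, and for each $i\in\irg{1}{S}$ let $B_i$ be the $n_2$ vertices indexing $\Yb_{i,i}$. Reading off the specified entries makes the edge set transparent: the block $\Xb$ turns $H$ into a clique, each $\Zb_i$ makes every head vertex adjacent to every vertex of $B_i$, and each $\Yb_{i,i}$ turns $B_i$ into a clique, while the only unspecified blocks are those between $B_i$ and $B_j$ for $i\neq j$, so there are no edges between distinct petals. In short, $G_{spec}$ is the union of the $S$ cliques supported on $H\cup B_i$, all sharing the common clique $H$ and otherwise vertex-disjoint.

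For chordality I would exhibit a perfect elimination ordering rather than chase chords around cycles. Order the vertices by listing $B_1,\dots,B_S$ first (internal order irrelevant) and $H$ last. If $v\in B_i$, every neighbour of $v$ lies in $(B_i\setminus\{v\})\cup H$, a subset of the clique $H\cup B_i$, so the later neighbours of $v$ form a clique; if $v\in H$, then by the time it is eliminated every petal vertex is gone, so its later neighbours lie in $H$, again a clique. Possessing a perfect elimination ordering is equivalent to chordality, which settles the first claim and automatically covers the degenerate case $n_1=0$, where $G_{spec}$ is a disjoint union of the cliques $B_i$. Alternatively one can argue directly from the paper's definition: a cycle confined to one petal has a chord because the petal is complete, and a cycle meeting two petals must traverse $H$ in two separated places, yielding two non-consecutive head vertices whose guaranteed adjacency supplies a chord.

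For the block-clique claim I would split on $n_1$. When $n_1\le 1$ the graph is block-clique: for $n_1=0$ the connected components are the cliques $B_i$, and for $n_1=1$ the single head vertex $h$ is a cut vertex whose deletion separates the cliques $\{h\}\cup B_i$, so these petals are precisely the blocks and each is a clique (here $S>1$ is what makes $h$ a genuine cut vertex). When $n_1\ge 2$ I would instead show $G_{spec}$ fails to be block-clique by producing a block that is not complete. Choosing $h_1,h_2\in H$ and $b_1\in B_1$, $b_2\in B_2$, the induced subgraph on $\{h_1,h_2,b_1,b_2\}$ contains every edge except $b_1b_2$; more globally the whole graph is $2$-connected, since deleting any single vertex still leaves at least one head vertex joining all petals, so there is no cut vertex. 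Hence $G_{spec}$ is a single block, yet not a clique because $b_1$ and $b_2$ are non-adjacent, so not every block is a clique.

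The step needing the most care is this last, negative direction. The positive assertions (chordality, and block-clique for $n_1\le 1$) are essentially bookkeeping, but to conclude ``not block-clique'' I must be precise that a non-complete $2$-connected graph is a single block and that a block containing two non-adjacent vertices cannot be a clique. I would also flag the standing nondegeneracy hypotheses this direction silently uses: $S>1$, so two distinct petals exist, and $n_2\ge 1$, so each petal is nonempty and furnishes the non-adjacent pair $b_1,b_2$; were $n_2=0$ the graph would collapse to the clique on $H$ and the stated dichotomy would read differently.
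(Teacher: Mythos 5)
Your proof is correct, and for the chordality half it takes a genuinely different route from the paper. The paper argues directly from the definition of chordality: it notes that each group (head and petals) is a clique, that every head vertex is adjacent to \emph{all} other vertices, and that any cycle meeting more than one group must pass through the head, so a chord can always be exhibited explicitly. You instead produce a perfect elimination ordering (petal vertices first, head vertices last) and invoke the standard equivalence between possessing such an ordering and chordality (Fulkerson--Gross); your fallback ``direct'' argument via two non-consecutive head vertices on a cycle meeting two petals is essentially the paper's argument in disguise. The PEO route is shorter, avoids the cycle case analysis entirely, and absorbs the degenerate case $n_1=0$ for free, at the cost of importing a characterization theorem the paper never states; the paper's argument is self-contained relative to its own definitions. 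For the block-clique dichotomy your treatment coincides with the paper's: for $n_1=1$ the single head vertex is a cut vertex and the blocks are the petal-plus-head cliques, for $n_1=0$ the components are the petal cliques, and for $n_1\geq 2$ the graph has no cut vertex (at least one surviving head vertex keeps everything joined after any single deletion), hence is a single block that fails to be complete because vertices in distinct petals are non-adjacent. Two points where you are actually more careful than the paper: you make the two ingredients of the negative direction explicit (a non-complete $2$-connected graph is one block; a block with a non-adjacent pair is not a clique), and you flag the standing hypothesis $n_2\geq 1$, without which the statement for $n_1\geq 2$ would be false since the graph would collapse to the clique on the head vertices.
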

 	\begin{proof}
 		We start out by showing that $G_{spec}$ is chordal in general. We group the nodes of the specification graph into $S+1$ groups where the first group $g_0 = \lrbr{1,\dots n_1}$ are the nodes that correspond to the first $n_1$ rows of the matrix and whose internal edges are specified by the north west entries $\Xb$. The second group $g_1 = \lrbr{n_1+1,\dots,n_1+n_2}$ corresponds to the rows $n_1+1$ to $n_1+n_2$ whose internal edges are specified by the blocks $\Yb_{2,2}$ and whose external edges, connecting to neighbors outside of $g_1$, are specified by $\Zb_1$. The construction of the remaining groups proceeds accordingly. We will now show that any cycle of length greater 3 must have a chord. Note that all the groups are cliques since the blocks $\Yb_{i,i}$ are fully specified. 
 		Thus, a cycle of length greater than 3 must have a chord if it is entirely contained in one of the groups.  We therefore only need to consider cycles that are not entirely contained in one group.	Also, any member of $g_0$ is a neighbor to any other node in the graph since the blocks $\Zb_i, \ i \in \irg{1}{S}$ are fully specified. Thus, if a vertex $v$ of $g_0$ is visited by a cycle, then the edge to any other node in the cycle that is not the predecessor of $v$ gives a chord.
 		A cycle that visits more than one group needs to visit $g_0$ since the other groups are not connected to one another and thus has a chord.
 		
 		If $n_1=1$ then $g_0$ is a singleton. A block cannot contain just vertices from multiple $g_i, \ i \in\irg{1}{S}$ since these groups are pairwise disconnected. A connection can only be established by adding $g_0$ but then the single node in $g_0$ is a cut vertex, i.e. the  subgraph can become disconnected by deleting a single node and its adjacent edges. Hence, a block of $G_{spec}$ must be a subgraph formed from the union of $g_0$ and one $g_i,\ i \in \irg{1}{S}$ and  the respective edges. A subgraph formed from all the nodes of such a union, say $T$,  cannot be contained in any other block since the construction of such a block would require to add nodes from a third group. Thus, $T$ is a block, but it is also a clique since the $q_i$ is a clique and the node in $g_0$ is adjacent to all the members of $g_i$. 
 		
 		If $n_1 = 0$ then $G_{spec}$ consists of $S$ subgraphs that are cliques and pairwise disconnected, hence they are blocks. 
 		
 		Otherwise, the entire graph is its only block since it cannot become disconnected by deleting a single node and its adjacent edges, but this block is not a clique since $g_i, \ i \in \irg{1}{S}$ have no inter-group edges.  
 	\end{proof}

	As a consequence of the lemma, the traditional route for sparse conic reformulations provides little insight: If $\KK_i$ are positive orthants the completion theorems are not applicable since (\ref{eqn:DecomposableQCQPBurer}) lacks the proper sparsity pattern. Also in that case, we cannot compare the $\DNN$ relaxations of (\ref{eqn:DecomposableQCQPBurer}) and its sparse relaxation based on (\ref{eqn:SmallerConicConstraints}), since the same sparsity pattern would be required. If $\KK_i$ are neither the positive orthant nor the full space, we do not even have any completion results to begin with. 
	
	Still, the present methodology allows for at least some insight into the benefits of working with (\ref{eqn:SmallerConicConstraints}), namely in the form of the following performance guarantee.  
	
 	\begin{thm}\label{thm:DecomposableQCQPSDPguarantee}
 		Let $\mathrm{val}(SDP)$ be the optimal value of problem (\ref{eqn:DecomposableQCQPBurer}) after $\CPP(\R_+\times_{i=0}^S\KK_i)$ is replaced by $\SS^{n_1+Sn_2+1}_+$ and let $\mathrm{val}(R)$ be that optimal value after replacing the full conic constraint with the conic constraints in (\ref{eqn:SmallerConicConstraints}). 
 		We have $\mathrm{val}(SDP)\leq\mathrm{val}(R)$ and the statement also holds if we replace the cones $ \CPP(\R_+\times \KK_0\times\KK_i),\ i\in\irg{1}{S}$ in (\ref{eqn:SmallerConicConstraints}) by any other subsets of $\SS^{n_1+n_2+1}_+$.
 	\end{thm}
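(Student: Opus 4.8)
The plan is to prove the inequality by a lifting argument: I will show that every point feasible for the relaxation $(R)$ lifts to a point feasible for the semidefinite relaxation with identical objective value. Since minimization over the larger attainable set of objective values can only decrease the optimum, this yields $\mathrm{val}(SDP)\leq\mathrm{val}(R)$. Concretely, I would start from an arbitrary $(R)$-feasible point, consisting of $\x,\y_i,\Xb,\Zb_i,\Yb_{i,i}$ satisfying the linear constraints, the diagonalized quadratic constraints, the linearized constraints $\hat Q_j$, and the small conic constraints (\ref{eqn:SmallerConicConstraints}). To produce an $SDP$-feasible point (reading the $SDP$ relaxation as the positive semidefinite cone $\SS^{n_1+Sn_2+1}_+$), the only task is to supply values for the off-diagonal blocks $\Yb_{i,j}$, $i\neq j$, that were absent in $(R)$, so that the full arrowhead matrix of (\ref{eqn:DecomposableQCQPBurer}) becomes positive semidefinite.

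The central observation that makes this possible is that these off-diagonal blocks occur in \emph{no} term or constraint other than the large conic constraint. The objective involves only $\Xb,\Zb_i,\Yb_{i,i},\x,\y_i$; the linear and diagonal constraints involve only $\x,\y_i,\Xb,\Zb_i,\Yb_{i,i}$; and, crucially, the structural hypothesis that each $Q_j$ carries no bilinear term between $\y_i$ and $\y_j$ for $i\neq j$ means that the linearizations $\hat Q_j$ contain no $\Yb_{i,j}$ with $i\neq j$. Hence any choice of the missing blocks keeps all of these constraints satisfied and leaves the objective value unchanged, so the problem is reduced purely to achieving membership of the completed matrix in $\SS^{n_1+Sn_2+1}_+$.

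This is exactly a positive semidefinite completion problem. I would view the big matrix as a partial matrix whose specified entries are the head (the leading $1$, the vectors $\x,\y_i$, and the block $\Xb$), the coupling blocks $\Zb_i$, and the diagonal blocks $\Yb_{i,i}$, with the $\Yb_{i,j}$, $i\neq j$, unspecified; note that the shared head entries are identical across all $M_i$ since they come from one $(R)$-point, so this is a consistent partial matrix. Its fully specified principal submatrices are precisely the principal submatrices of the small matrices $M_i := \bigl(\begin{smallmatrix}1&\x\T&\y_i\T\\ \x&\Xb&\Zb_i\T\\ \y_i&\Zb_i&\Yb_{i,i}\end{smallmatrix}\bigr)$, because no fully specified submatrix can straddle two distinct $\y$-groups. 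Since $\CPP(\R_+\times\KK_0\times\KK_i)\subseteq\SS^{n_1+n_2+1}_+$, each $M_i$ is positive semidefinite, so the partial matrix is partial positive semidefinite. Applying \cref{lem:ArrowHeadMatricesAreChordal} with the head block taken to be the $\x/\Xb$-block enlarged by the constant-and-linear row and column (so the head has dimension $n_1+1\geq 1$), its specification graph is chordal; \cref{thm:SDPCompletionChordalGraphs} then yields a positive semidefinite completion, i.e.\ admissible blocks $\Yb_{i,j}$, giving the desired $SDP$-feasible point.

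The final clause of the theorem is then immediate, since the completion step used only that each $M_i$ is positive semidefinite; this remains true if the cones $\CPP(\R_+\times\KK_0\times\KK_i)$ are replaced by any subsets of $\SS^{n_1+n_2+1}_+$. I expect the only non-bookkeeping obstacle to be this completion step, and within it the two verifications that (i) absorbing the extra constant-and-linear row and column into the head still matches the chordal hypothesis of \cref{lem:ArrowHeadMatricesAreChordal} (applied with head size $n_1+1$ rather than $n_1$), and (ii) the maximal fully specified principal submatrices coincide exactly with the $M_i$, so that partial positive semidefiniteness genuinely follows from (\ref{eqn:SmallerConicConstraints}) alone.
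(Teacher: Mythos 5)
Your proof is correct and takes essentially the same route as the paper's: lift an $(R)$-feasible point by viewing the missing blocks $\Yb_{i,j}$, $i\neq j$, as unspecified entries of an arrowhead partial matrix, observe that the fully specified principal submatrices are exactly (principal submatrices of) the small blocks from (\ref{eqn:SmallerConicConstraints}), and invoke \cref{lem:ArrowHeadMatricesAreChordal} together with \cref{thm:SDPCompletionChordalGraphs} to complete to a positive semidefinite matrix, noting that the final clause needs only $M_i\in\SS^{n_1+n_2+1}_+$. Your two explicit verifications, that the linearizations $\hat Q_j$ contain no $\Yb_{i,j}$ with $i\neq j$ and that the chordality lemma is applied with head size $n_1+1$ rather than $n_1$, are details the paper leaves implicit, so no gap remains.
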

 	\begin{proof}
 		Clearly, the two problems have the same objective function, so we only need to compare the feasible sets. Let $\left(\Xb,\Yb_1,\dots,\Yb_S,\Zb_i,\dots,\Zb_S,\x,\y_1,\dots,\y_S\right)$ be such that 
 		\begin{align}\label{eqn:FullySpecifiedBlocks}
 			\begin{pmatrix}
 				1 & \x\T &\y_i\T \\
 				\x& \Xb  &\Zb_i\T\\
 				\y_i& \Zb_i& \Yb_i
 			\end{pmatrix} &\in\blue{\CPP(\R_+\times \KK_0\times\KK_i) \subseteq \SS^{n_1+n_2+1}_+,} \  i\in\irg{1}{S},
 		\end{align}
 		and the linear constraints in (\ref{eqn:DecomposableQCQPBurer}) are fulfilled, i.e.\ we have a feasible solution for the optimization problem defining $\mathrm{val}(R)$. \blue{The setinclusion holds since, $\CPP(\KK_A)\subseteq\CPP(\KK_B)$ whenever $\KK_A\subseteq\KK_B$, and $\SS^n_+ =\CPP(\R^n)$}. All we need to show, is that, after setting $\Yb_{i,i} = \Yb_i,\ i \in \irg{1}{S}$, we can find $\Yb_{i,j}, \ i \neq j$ such that we can construct a positive semidefinite matrix. By \cref{thm:SDPCompletionChordalGraphs}  it suffices to show that the specification graph of the partial matrix where $\Yb_{i,j},\ i\neq j$ are not specified is a chordal graph and that all fully specified principal submatrices are positive semidefinite. So consider the partial matrix 
 		\begin{align*}
 			\begin{pmatrix}
 				1 & \x\T &\y_1\T & \y_2\T & \dots&\y_{S-1}\T & \y_S\T \\
 				\x& \Xb & \Zb_1\T & \Zb_2\T &\dots & \Zb_{S-1}\T & \Zb_S\T \\
 				\y_1&\Zb_1 & \Yb_{1,1} & \mathbf{*} & \dots  & \mathbf{*}  & \mathbf{*}\\
 				\y_2&\Zb_2 & \mathbf{*} & \Yb_{2,2}& \dots  & \mathbf{*}  & \mathbf{*} \\
 				\vdots&\vdots&\vdots&\vdots&\ddots & \vdots & \vdots  \\
 				\y_{S-1} & \Zb_{S-1}  & \mathbf{*}  & \mathbf{*} & \dots & \Yb_{S-1,S-1}  & \mathbf{*} \\
 				\y_S & \Zb_S  & \mathbf{*}  & \mathbf{*}&\dots  & \mathbf{*} & \Yb_{S,S}
 			\end{pmatrix}.
 		\end{align*}
 		Since in all but the first two columns (we use this word now referring to literal columns in the above representation) have unspecified blocks one can only obtain fully specified principal submatrices if one deletes all but one of the partially specified columns and all but the respective rows (again in the literal sense). The so obtained blocks are precisely the blocks in (\ref{eqn:FullySpecifiedBlocks}) and are thus positive semidefinite. The chordality of the specification graph follows from \cref{lem:ArrowHeadMatricesAreChordal}. This completes the proof. 
 	\end{proof}
 	
 	The theorem states that our sparse, hence low dimensional, relaxation is at least as strong as the fully dimensional SDP-relaxation and thus gives a theoretical performance guarantee. It also applies to relaxations of (\ref{eqn:SmallerConicConstraints}) such as the $\DNN$-relaxation since $\DNN^n\subseteq \SS^n_+$. 
 	\begin{rem}
 		We could have arrived at \cref{lem:ArrowHeadMatricesAreChordal} by using the results in \cite{fukuda_exploiting_2001} who describe a chordality-detection procedure for SDPs with chordal sparsity pattern. \blue{The chordality of the arrow head matrices was also observed in \cite{bettiol_mining_2022,zheng_chordal_2020}. However, it is more convenient for the reader if this technical detail is proved here directly}. It is nonetheless important to note that the above result is not the first of its kind, but can be obtained directly from known results in literature. Still, to the best of our knowledge, the context in which we use this technique is original. 
 	\end{rem}

 	\begin{rem}
 		At this point we would also like to highlight a specific shortcoming of the above completion theorems. An inattentive reading of their claims might give the false impression that, as an example, for a partial psd-matrix to be completable, it needs to have a chordal specification graph. This assessment is incorrect. A partial psd-matrix $\Mb$ may have a specification graph $G(\Mb)$ that is not chordal, while still being psd-completeable. All the theorem says is that not all partial psd-matrices with specification graph $G(\Mb)$ are psd-completable. But that does not exclude the possibility that some still can be completed. This is significant, since for a sparse relaxation to be exact it suffices that its optimal set contains just one appropriately completable matrix. To additionally require that all other feasible matrices, or more so, all matrices with the same sparsity pattern are completable is needlessly restrictive, which explains part of the inflexibility of the classical machinery. 
 	\end{rem}
	
	\section{An alternative approach to sparse reformulations}\label{sec:An alternative approach to sparse reformulations}
	
	We have seen that the classical approach to sparse reformulations is limited in several capacities. It is restrictive with respect to the cones $\KK_i$ and it is inflexible with respect to the sparsity structure, such that it is ultimately ill-equipped to tackle sparse reformulations of (\ref{eqn:DecomposableQCQPBurer}). We therefore propose and alternative strategy, where we provide a convex-conic reformulation of (\ref{eqn:DecomposableQuadraticProblem}) based on a generalization of $\CPP$ that rests on a lifting of the space of variables into a space that is of lower dimension than required for the classical $\CPP$-reformulation (\ref{eqn:DecomposableQCQPBurer}). Hence, the reformulation is already sparse, which comes at the price of having to optimize over a new, complicated cone. This, however, is just a new guise of an old problem in copositive optimization, and we will meet in a, thus, familiar fashion: by providing inner and outer approximations, that provide upper and lower bounds on the problem whose gap is hopefully small or even zero. In order to achieve this we will first introduce some necessary concepts, that will allow us to state and proof our main reformulation result. After that, we close this section with a detailed description of our approach.

	\subsection{The space of connected components $\SS_n^{S,k}$ and the cone of completable, completely positive, connected components $\CMP$}\label{sec:The space of connected}
	We define 
	\begin{align*}
		\SS_n^{S,k} &\coloneqq \lrbr{ 
			\left[
			\begin{pmatrix}
				\Xb & \Zb_1\T \\ \Zb_1 & \Yb_1
			\end{pmatrix},\dots,
			\begin{pmatrix}
				\Xb & \Zb_S\T \\ \Zb_S & \Yb_S
			\end{pmatrix} \right]
			\colon 
			\begin{pmatrix}
				\Xb & \Zb_i\T \\ \Zb_i & \Yb_i
			\end{pmatrix}\in \SS^n,\ i \in \irg{1}{S}, \ \Xb\in\SS^k		
		},
	\end{align*}
	i.e. the set of vectors of $S$ symmetric matrices of order $n$ connected by a component of order $k$, which we call the space of connected components. In order to distinguish elements of $\SS_n^{S,k}$ from normal matrices we use san-serif letters braced by rectangular braces, for example $\left[\Ab\right]$. Note, that $\SS_n^{S,k}$ is isomorphic to the space of arrowhead matrices by the isomorphism 
	\begin{align*}
		\Gamma  \colon \SS^{S,k}_n \rightarrow \SS^{k+Sn}, \
		\left[\Ab\right]\mapsto \Gamma\left(\left[\Ab\right]\right) \coloneqq
		\begin{pmatrix}
			\Xb & \Zb_1\T &\dots & \Zb_S \\ 
			\Zb_1 & \Yb_1  & \dots & \Ob \\
			\vdots& \vdots & \ddots & \vdots \\
			\Zb_S & \Ob & \dots  & \Yb_S
		\end{pmatrix},  
	\end{align*}
	where for the inverse we have 
	\begin{align*}
		\Gamma^{-1}\begin{pmatrix}
			\Xb & \Zb_1\T &\dots & \Zb_S \\ 
			\Zb_1 & \Yb_1  & \dots & \Ob \\
			\vdots& \vdots & \ddots & \vdots \\
			\Zb_S & \Ob & \dots  & \Yb_S
		\end{pmatrix} = \left[
		\begin{pmatrix}
			\Xb & \Zb_1\T \\ \Zb_1 & \Yb_1
		\end{pmatrix},\dots,
		\begin{pmatrix}
			\Xb & \Zb_S\T \\ \Zb_S & \Yb_S
		\end{pmatrix} \right]\in \SS^{S,k}_n.
	\end{align*}
	Thus, $\SS_n^{S,k}$ is a vector space with a natural inner product $\left[\Ab\right]\odot\left[\Bb\right] \coloneqq \Gamma\left(\left[\Ab\right]\right)\bullet\Gamma\left(\left[\Ab\right]\right)$, sum $\left[\Ab\right]\oplus\left[\Bb\right] \coloneqq \Gamma\left(\left[\Ab\right]\right)+\Gamma\left(\left[\Ab\right]\right)$ and scalar multiplication $\gl\left[\Ab\right] \coloneqq \Gamma^{-1}\left(\gl \Gamma\left(\left[\Ab\right]\right)  \right)$. For notational convenience we will expand the meaning of the inverse $\Gamma^{-1}$ so that it is applicable to non-arrowhead matrices as well, where the nonzero off-diagonal blocks are treated as though they were blocks of zeros as in the definition above. Also, we define a second, analogous isomorphism $\Gamma_*(\cdot)$ that maps into the space of partial matrices where the blocks of zeros in the definition of $\Gamma(\cdot)$ are not specified. We also will use the shorthand notation
	\begin{align*}
		\left[
		\begin{pmatrix}
			\Xb & \Zb_1\T \\ \Zb_1 & \Yb_1
		\end{pmatrix},\dots,
		\begin{pmatrix}
			\Xb & \Zb_S\T \\ \Zb_S & \Yb_S
		\end{pmatrix} \right] = 
		\left[
		\begin{pmatrix}
			\Xb & \Zb_i\T \\ \Zb_i & \Yb_i
		\end{pmatrix}\right]_{i\in\irg{1}{S}}.
	\end{align*}

	The central object we are interested in is the following subset of $\SS_n^{S,k}$: 
	\begin{align*}
		\CMP \left(\KK_0,\dots,\KK_S\right) &\coloneqq \mathrm{conv}\lrbr{ 
			\left[
			\begin{pmatrix}
				\x \\ \y_i
			\end{pmatrix}
			\begin{pmatrix}
				\x \\ \y_i
			\end{pmatrix}\T
			\right]_{i\in\irg{1}{S}}
			\colon 
			\begin{pmatrix}
				\x \\ \y_i
			\end{pmatrix}\in\KK_0\times \KK_i,\ i \in \irg{1}{S}	
		},
	\end{align*}
	where $\KK_0\subseteq \R^{k},\ \KK_i\subseteq \R^{n-k},\ i \in \irg{1}{S}$ are convex cones, which we refer to as \textit{ground cones}. 
	%Since in most cases discussed in this text $\KK_i, \ \inirgs$ are identical we will often denote $\KK_0\otimes\KK \coloneqq (\KK_0,\KK\dots,\KK)$ as w vector of $S+1$ cones, to be used as an argument for $\CMP$. 
	We often use $\CMP$ without its arguments as a colloquial term, in case the respective ground cones are not important to, or clear from, the context at hand. The same is true for all abbreviations of its inner and outer approximations that will be discussed later in the text.  
	
	We call $\CMP$ the cone of \textit{completable, completely positive, connected components} and we will justify that name in a latter section. Further, we define  $\mathrm{gen} \CMP$ to be the set of its generators, i.e.\ the set we obtain by omitting the $\conv$-operator in the definition of $\CMP$. \blue{We also like to note, that for the case $S = 1$ the cone $\CMP$ reduces to $\CPP$, with its ground cone given by $\KK_0\times\KK_1$. Further, if $k=n$  the ground cone reduces to $\KK_0$,  so that $\CMP$ can be seen as a generalization of $\CPP$. However, we will see later in the text, that in fact a certain type of correspondence holds between the two objects (see \cref{rem:RemarkonCMP}).}

%	\begin{rem}
%		In order to keep the discussion in this text concise we provide a detailed discussion and analysis in a separate paper, namely \red{ZITAT}	
%	\end{rem}

	\subsection{Main result: a new type of convex reformulation, with reduced dimension}\label{sec:AgeometricalApproachTheCMP}
	
	The derivation of our main result relies heavily on the very general framework from \cite{kim_geometrical_2020}, for achieving convex reformulations for a large array of problems.  In the following paragraphs we will give a small and simplified account of their results in order to make the derivation of our main result as transparent as possible. The two theorems we discuss shortly are specializations of theorems in \cite{kim_geometrical_2020}, which we prove here for the readers convenience. To distinguish this more abstract discussion from the rest of the paper, and to highlight the special role of the sets we are about to introduce, we diverge from the convention of denoting sets via calligraphic capital letters and use blackboard bold capital letters.
	
	We start out be investigating a more general question. So, let $\mathbb{V}$ be a vector space of dimension $n$. For a (possibly nonconvex) cone $\mathbb{K}\subseteq \mathbb{V}$, and vectors $\Qb,\Hb_0\in\mathbb{V}$ and a convex set $\mathbb{J}\subseteq\mathrm{conv}(\mathbb{K})$. We want to know when we have the equality:
	\begin{align*}
		\min_{\Xb\in \mathbb{V}} \lrbr{\scp{\Qb}{\Xb} \colon \Xb\in\mathbb{K}\cap\mathbb{J},\ \scp{\Hb_0}{\Xb} = 1} =\min_{\Xb\in \mathbb{V}} \lrbr{\scp{\Qb}{\Xb} \colon \Xb\in\mathbb{J},\ \scp{\Hb_0}{\Xb} = 1}?
	\end{align*}  
	Defining $\mathbb{H}\coloneqq \lrbr{\Xb\colon \scp{\Hb_0}{\Xb} = 1 }$, we can equivalently ask for conditions for the equality 
	\begin{align*}
		\mathrm{conv}(\mathbb{H}\cap\mathbb{K}\cap\mathbb{J}) =\mathbb{H}\cap\mathbb{J}.
	\end{align*}
	The following theorem gives an answer based on convex geometry. 
	\begin{thm}\label{thm:KimsGeneralGeometricApproach}
		For $\mathbb{H},\mathbb{K},\mathbb{J}$ as above, assume that $\mathbb{H}\cap\mathbb{J}\neq \emptyset$ is bounded and that $\mathbb{J}$ is a face of $\mathrm{conv}(\mathbb{K})$. Then $\mathrm{conv}(\mathbb{H}\cap\mathbb{K}\cap\mathbb{J}) =\mathbb{H}\cap\mathbb{J}$.
	\end{thm}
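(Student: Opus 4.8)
The plan is to establish the two inclusions of $\conv(\mathbb{H}\cap\mathbb{K}\cap\mathbb{J}) = \mathbb{H}\cap\mathbb{J}$ separately, with essentially all of the work going into $\supseteq$. The inclusion $\subseteq$ is immediate: since $\mathbb{H}=\lrbr{\Xb\colon\scp{\Hb_0}{\Xb}=1}$ is an affine hyperplane and $\mathbb{J}$ is convex, the set $\mathbb{H}\cap\mathbb{J}$ is convex and contains $\mathbb{H}\cap\mathbb{K}\cap\mathbb{J}$, so taking convex hulls preserves the containment. Before addressing the reverse inclusion I would record two structural facts. First, a face of a convex cone is again a convex cone containing the origin, so $\mathbb{J}$ is a convex cone; moreover $\mathbb{J}\neq\lrbr{\0}$, since $\scp{\Hb_0}{\0}=0\neq 1$ means $\0\notin\mathbb{H}$, so $\mathbb{J}=\lrbr{\0}$ would force $\mathbb{H}\cap\mathbb{J}=\emptyset$, contradicting the hypothesis. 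Second, $\conv(\mathbb{K})$ is a convex cone and $\mathbb{J}\subseteq\conv(\mathbb{K})$.

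For the reverse inclusion, fix $\Xb\in\mathbb{H}\cap\mathbb{J}$. By Carath\'eodory in $\conv(\mathbb{K})$ I can write $\Xb=\sum_i\lambda_i\Vb_i$ as a finite convex combination with $\lambda_i>0$ and $\Vb_i\in\mathbb{K}$. Since $\Xb$ lies in the face $\mathbb{J}$ and is expressed as a convex combination of points $\Vb_i\in\conv(\mathbb{K})$ with strictly positive weights, the defining property of a face yields $\Vb_i\in\mathbb{J}$, and hence $\Vb_i\in\mathbb{K}\cap\mathbb{J}$ for every $i$. The next step is to rescale each $\Vb_i$ onto $\mathbb{H}$, which requires that $\beta_i\coloneqq\scp{\Hb_0}{\Vb_i}>0$.

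This is where boundedness enters, and it is the crux of the argument. I claim $\scp{\Hb_0}{\Vb}>0$ for every $\Vb\in\mathbb{J}\setminus\lrbr{\0}$. The set $\mathbb{H}\cap\mathbb{J}$ is a bounded base of the cone $\mathbb{J}$; a nonempty convex set is bounded if and only if its recession cone is trivial, and the recession cone of $\mathbb{H}\cap\mathbb{J}$ is exactly $\mathbb{J}\cap\lrbr{\Vb\colon\scp{\Hb_0}{\Vb}=0}$. Thus $\scp{\Hb_0}{\cdot}$ vanishes on $\mathbb{J}$ only at $\0$; being a continuous functional that is nonvanishing on $\mathbb{J}\setminus\lrbr{\0}$ and equal to $1>0$ at some point, it has constant positive sign there whenever $\mathbb{J}\setminus\lrbr{\0}$ is connected. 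Consequently each $\beta_i>0$, so I may set $\Wb_i\coloneqq\Vb_i/\beta_i$, which lies in $\mathbb{H}\cap\mathbb{K}\cap\mathbb{J}$ because $\mathbb{K}$ and $\mathbb{J}$ are cones and $\scp{\Hb_0}{\Wb_i}=1$. It then remains to recombine: $\Xb=\sum_i\lambda_i\Vb_i=\sum_i(\lambda_i\beta_i)\Wb_i$, where the coefficients satisfy $\lambda_i\beta_i>0$ and $\sum_i\lambda_i\beta_i=\scp{\Hb_0}{\sum_i\lambda_i\Vb_i}=\scp{\Hb_0}{\Xb}=1$, so this is a genuine convex combination of points of $\mathbb{H}\cap\mathbb{K}\cap\mathbb{J}$, giving $\Xb\in\conv(\mathbb{H}\cap\mathbb{K}\cap\mathbb{J})$.

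I expect the positivity claim $\scp{\Hb_0}{\cdot}>0$ on $\mathbb{J}\setminus\lrbr{\0}$ to be the main obstacle. The connectedness argument is clean as soon as $\mathbb{J}$ is pointed, and in fact a bounded base forces pointedness except in the degenerate situation where $\mathbb{J}$ is a single line, in which case $\mathbb{H}\cap\mathbb{J}$ reduces to a single point and $\scp{\Hb_0}{\cdot}$ changes sign. This case I would dispatch directly: writing the base point as $\sum_i\lambda_i\Vb_i$ with $\Vb_i\in\mathbb{K}\cap\mathbb{J}$, the averaged identity $\sum_i\lambda_i\scp{\Hb_0}{\Vb_i}=1>0$ guarantees at least one $\Vb_i$ with $\scp{\Hb_0}{\Vb_i}>0$, and the base point is then a positive multiple of that $\Vb_i\in\mathbb{K}$, hence already lies in $\mathbb{H}\cap\mathbb{K}\cap\mathbb{J}$. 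Together these cases close the reverse inclusion and complete the proof.
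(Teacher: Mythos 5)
Your proof is correct and follows the same skeleton as the paper's argument: the inclusion $\subseteq$ is immediate from convexity of $\mathbb{H}\cap\mathbb{J}$, and for $\supseteq$ you decompose $\Xb\in\mathbb{H}\cap\mathbb{J}$ into elements of $\mathbb{K}$, push them into $\mathbb{K}\cap\mathbb{J}$ via the face property, rescale onto $\mathbb{H}$ and recombine. Where you genuinely depart from the paper is at the step you rightly call the crux: the paper dispatches the claim $\scp{\Hb_0}{\Xb_i}>0$ in one line (``since $\mathbb{H}\cap\mathbb{J}\neq\emptyset$ is bounded''), whereas you observe that boundedness by itself only forces $\scp{\Hb_0}{\cdot}$ to be \emph{nonvanishing} on $\mathbb{J}\setminus\lrbr{\0}$, and that upgrading ``nonzero'' to ``positive'' needs the connectedness of $\mathbb{J}\setminus\lrbr{\0}$ (valid when $\mathbb{J}$ is pointed) together with a separate treatment of the degenerate case where $\mathbb{J}$ is a line. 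This extra care is not pedantry: in the line case the paper's assertion is false for an arbitrary decomposition --- take $\mathbb{K}=\mathbb{J}=\R\v$ with $\scp{\Hb_0}{\v}=1$, so that $\mathbb{H}\cap\mathbb{J}=\lrbr{\v}$ is bounded, yet $\v=2\v+(-\v)$ writes $\v$ as a sum of elements of $\mathbb{K}\setminus\lrbr{\0}$ with $\scp{\Hb_0}{-\v}<0$ --- and your case analysis repairs exactly this gap (it is harmless in the paper's applications, where $\conv(\mathbb{K})$ sits inside a pointed cone of positive semidefinite matrices, but the theorem as stated admits the degenerate case). Two small points to tighten: before dividing by $\beta_i$ you should discard any zero summands $\Vb_i=\0$, which is harmless since $\mathbb{K}$ is a cone and the remaining terms can be rescaled back into a convex combination; and the equivalence ``bounded iff trivial recession cone'' you invoke is a theorem about \emph{closed} convex sets, but you only need the elementary direction, namely that a nonzero $\Vb\in\mathbb{J}$ with $\scp{\Hb_0}{\Vb}=0$ yields the unbounded ray $\Xb_0+t\Vb$ inside $\mathbb{H}\cap\mathbb{J}$, which follows with no closedness assumption because the convex cone $\mathbb{J}$ is closed under addition.
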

	\begin{proof}
		For the "$\subseteq$"-inclusion, since $\mathbb{H}\cap\mathbb{K}\cap\mathbb{J}\subseteq \mathbb{H}\cap\mathbb{J}$ and the latter set is convex, there is nothing left to show.
		For the converse, let $\Xb\in \mathbb{H}\cap\mathbb{J}$. Then $\Xb\in \mathrm{conv}(\mathbb{K})$ since $\mathbb{J}\subseteq \mathrm{conv}(\mathbb{K})$, so that $\Xb = \sum_{i=1}^{n}\Xb_i$ with $ \Xb_i \in \mathbb{K}\setminus\lrbr{\Ob}$ but also $\Xb_i \in \mathbb{J}$ since $\mathbb{J}$ is a face of  $\mathrm{conv}(\mathbb{K})$ so that $\Xb_{i}\in \mathbb{K}\cap\mathbb{J}$.
		Now, $\scp{\Hb_0}{\Xb_{i}}>0$ since $\mathbb{H}\cap\mathbb{J}\neq \emptyset$  is bounded. Define $\gl_{i} = \scp{\Hb_0}{\Xb_{i}}.$ 
		We have $\scp{\Hb_0}{\Xb} = \sum_{i}\scp{\Hb_0}{\Xb_{i}} =  \sum_{i}\gl_{i} = 1$ and 
		$\gl_{i}^{-1}\Xb_{i}\eqqcolon \bar\Xb_{i} \in \mathbb{K}\cap\mathbb{J}$ and thus 
		$\Xb = \sum_{i}\gl_{i} \bar\Xb_{i} \in \mathrm{conv}(\mathbb{H}\cap\mathbb{K}\cap\mathbb{J})$.   
	\end{proof}
	This theorem motivates the search for a condition that lets us identify faces of convex cones, which are provided in the following theorem.
	\begin{thm}\label{thm:IdentifyFacesofK}
		Assume that $\mathbb{J} = \lrbr{\Xb\in\mathrm{conv}(\mathbb{K})\colon \scp{\Ab_i}{\Xb} = 0,\ i \in \irg{1}{m}}$ and define 
		$\mathbb{J}_p \coloneqq \lrbr{ \Xb\in\mathrm{conv}(\mathbb{K})\colon \scp{\Ab_i}{\Xb} = 0,\ i \in \irg{1}{p}}$ so that $\mathbb{J}_m=\mathbb{J}$ and  $\mathbb{J}_0=\mathrm{conv}(\mathbb{K})$. \linebreak
		If $\Ab_p \in \mathbb{J}_{p-1}^*,\ \blue{p} \in \irg{1}{m}$ then $\mathbb{J}$ is a face of $\mathrm{conv}(\mathbb{K})$. 
	\end{thm}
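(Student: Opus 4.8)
The plan is to prove the statement by induction on $p$, establishing that every $\mathbb{J}_p$ is a face of $\mathrm{conv}(\mathbb{K})$; the instance $p=m$ is then exactly the claim. The base case is immediate, since $\mathbb{J}_0 = \mathrm{conv}(\mathbb{K})$ is trivially a face of itself. The whole argument rests on two elementary facts about convex cones that I would isolate up front. First, an \emph{exposed-face} fact: if $\Ab$ lies in the dual cone $\mathbb{C}^*$ of a convex cone $\mathbb{C}$, then $\lrbr{\Xb\in\mathbb{C}\colon \scp{\Ab}{\Xb}=0}$ is a face of $\mathbb{C}$. Second, \emph{transitivity of faces}: if $\mathbb{F}$ is a face of $\mathbb{G}$ and $\mathbb{G}$ is a face of $\mathrm{conv}(\mathbb{K})$, then $\mathbb{F}$ is a face of $\mathrm{conv}(\mathbb{K})$.

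For the exposed-face fact I would argue directly with the conic notion of face used in the proof of \cref{thm:KimsGeneralGeometricApproach}: suppose a point of the exposed set decomposes as $\Xb = \Xb_1 + \Xb_2$ with $\Xb_1,\Xb_2 \in \mathbb{C}$. Then $0 = \scp{\Ab}{\Xb} = \scp{\Ab}{\Xb_1} + \scp{\Ab}{\Xb_2}$, and since $\Ab\in\mathbb{C}^*$ forces both summands to be nonnegative, both must vanish, so $\Xb_1,\Xb_2$ again lie in the exposed set. Transitivity follows equally directly: a decomposition of a point of $\mathbb{F}$ into summands from $\mathrm{conv}(\mathbb{K})$ is first pushed into $\mathbb{G}$ (because $\mathbb{G}$ is a face of $\mathrm{conv}(\mathbb{K})$) and then into $\mathbb{F}$ (because $\mathbb{F}$ is a face of $\mathbb{G}$).

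With these in hand the inductive step is short. Assuming $\mathbb{J}_{p-1}$ is a face of $\mathrm{conv}(\mathbb{K})$, I would first rewrite $\mathbb{J}_p = \lrbr{\Xb\in\mathbb{J}_{p-1}\colon \scp{\Ab_p}{\Xb}=0}$, which is legitimate because the constraints indexed $1,\dots,p-1$ are already absorbed into the definition of $\mathbb{J}_{p-1}$. The hypothesis $\Ab_p\in\mathbb{J}_{p-1}^*$ then lets me apply the exposed-face fact with $\mathbb{C}=\mathbb{J}_{p-1}$, showing that $\mathbb{J}_p$ is a face of $\mathbb{J}_{p-1}$. Transitivity combined with the inductive hypothesis upgrades this to $\mathbb{J}_p$ being a face of $\mathrm{conv}(\mathbb{K})$, closing the induction.

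I do not anticipate a genuine obstacle here; the one point demanding care is the bookkeeping around the dual cone. The hypothesis constrains $\Ab_p$ to lie in the dual of the \emph{already restricted} face $\mathbb{J}_{p-1}$ rather than in the dual of the ambient cone $\mathrm{conv}(\mathbb{K})$, so the exposed-face argument must be run relative to $\mathbb{J}_{p-1}$; feeding it the ambient cone would require the stronger and unavailable assumption $\Ab_p\in\mathrm{conv}(\mathbb{K})^*$. Once transitivity of faces is available this distinction is handled automatically, which is precisely why structuring the proof around the nested sets $\mathbb{J}_p$ rather than around $\mathbb{J}$ directly is the natural route.
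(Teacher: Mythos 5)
Your proof is correct and is essentially the paper's own argument: the paper likewise inducts using the fact that a face of a face is a face, and establishes that $\mathbb{J}_p$ is a face of $\mathbb{J}_{p-1}$ by exactly your exposed-face computation ($0 = \scp{\Ab_p}{\Xb_1}+\scp{\Ab_p}{\Xb_2}$ with both terms nonnegative since $\Ab_p\in\mathbb{J}_{p-1}^*$). Your write-up merely makes the two ingredients (exposed faces and transitivity) more explicit as standalone facts.
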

	\begin{proof}
		\blue{Since} a face of a face a convex set is itself a face of that set, the claim will follow by induction if we can show that 	$$\Ab_p \in \mathbb{J}_{p-1}^* \implies \mathbb{J}_p \mbox{ is a face of }\mathbb{J}_{p-1}.$$ So let $\mathbb{J}_p\ni\Xb = \Xb_1+\Xb_2$ with $\Xb_i\in \mathbb{J}_{p-1},\ i \in \lrbr{1,2}$. We have $\scp{\Ab_p}{\Xb_i}\geq0$ since  $\Ab_p \in \mathbb{J}_{p-1}^*$ so that $0 = \scp{\Ab_p}{\Xb} = \scp{\Ab_p}{\Xb_1}+\scp{\Ab_p}{\Xb_2}$ implies that actually $\scp{\Ab_p}{\Xb_i}=0$ and we indeed have $\Xb_i\in\mathbb{J}_p,\ i\in\lrbr{1,2}$. 
	\end{proof}
	
	Based on the above theorems, it is quite straight forward to prove the classical result from \cite{burer_copositive_2009}, at least for the case where the linear portion of the set is bounded, with $\mathbb{K} = \mathrm{ext}\CPP(\R_+\times\KK)$ and $\mathbb{J}$ equal to the feasible set of the conic reformulation (we omit laying out the details here, but the steps required are equivalent to the ones laid out in the proof of \cref{thm:DecomposableQCQPExactCMPReformulation}). A natural question is, whether we can execute a similar strategy for proving the exactness of a conic reformulation of reduced dimension by replacing the cone of extreme rays of $\CPP(\KK)$ with another appropriately structured object as our choice for $\mathbb{K}$.
	
	In the following theorem we show that by choosing $\mathbb{K}  =\mathrm{gen}\CMP\left(\left(\R_+\times\KK_0\right),\KK_1,\dots,\KK_S\right)$ and $\mathbb{J}$ and $\mathbb{H}$ appropriately we can use \cref{thm:KimsGeneralGeometricApproach} in order to obtain an exact conic reformulation of (\ref{eqn:DecomposableQuadraticProblem}).  
	
	\begin{thm}\label{thm:DecomposableQCQPExactCMPReformulation}
		Considering (\ref{eqn:DecomposableQuadraticProblem}), assume $\FF_i~\coloneqq~\lrbr{\left(\x\T,\y_i\T\right)\in\KK_0\times\KK_i\colon \Fb_i\x+\Gb_i\y_i = \r_i}$  are nonempty bounded sets. Further, assume that 
		\begin{align}\label{asmp:KeyAssumption}
			\begin{pmatrix}
				\x,\y_i
			\end{pmatrix}\in \FF_i,\ i \in \irg{1}{S} \implies Q_{j}(\x,\y_1,\dots,\y_S)\geq 0,\ j \in \irg{1}{K}. 
		\end{align}
		Then (\ref{eqn:DecomposableQuadraticProblem}) is equivalent to the following conic optimization problem: 
		\begin{align}\label{eqn:DecomposableQCQPCMPRefromulation}
			\begin{split}
				\min_{[\Xb]\in \SS_{n_1+n_2+1}^{S,n_1+1}} [\Cb]\odot[\Xb]& \\
				\mathrm{s.t.:} \ 
				[\Hb_0]\odot[\Xb] &= 1,\\
				[\Fb_i]\odot[\Xb] &= 0,\ i \in \irg{1}{S},\\
				\hat{Q}_{j}\left([\Xb]\right) &= 0, \ j \in \irg{1}{K}, \\
				[\Xb] &\in \CMP\left(\left(\R_+\times\KK_0\right),\KK_1,\dots,\KK_S\right),
			\end{split}
		\end{align}
		where $[\Cb],[\Hb_0],[\Fb_i]\in\SS_{n_1+n_2+1}^{S,n_1+1}, \ i\in\irg{1}{S}$ are defined as 
		\begin{align*}
			[\Cb]&\coloneqq \Gamma^{-1}
			\begin{pmatrix}
				0 & \tfrac{1}{2}\aa\T & \tfrac{1}{2}\cc_1\T & \dots & \tfrac{1}{2}\cc_S\T\\
				\tfrac{1}{2}\aa &\Ab & \tfrac{1}{2}\Bb_1 &\dots & \tfrac{1}{2}\Bb_S \\ 
				\tfrac{1}{2}\cc_1\T &\tfrac{1}{2}\Bb_1\T & \Cb_1 & \dots & \Ob\\
				\vdots&\vdots& \vdots & \ddots & \vdots \\
				\tfrac{1}{2}\cc_S\T &\tfrac{1}{2}\Bb_S\T & \Ob & \dots  & \Cb_S
			\end{pmatrix}, \quad [\Hb_0] = \Gamma^{-1}\left(\e_1\e_1\T\right),   \\ \\
			[\Fb_i] &\coloneqq \Gamma^{-1} \left(\left(-\r_i, \Fb_i,\Ob,\dots,\Gb_i,\dots,\Ob\right)\T\left(-\r_i, \Fb_i,\Ob,\dots,\Gb_i,\dots,\Ob\right)\right).
		\end{align*}
		and $\hat{Q}_{j}(\cdot)\colon\SS^{S,n_1+1}_{n_1+n_2+1} \rightarrow \R$ are linear functions such that 
		\begin{align*}
			\hat{Q}_{j}\left(\Gamma^{-1}\left(
			\begin{pmatrix}
				x_0\\\x\\\y_1\\\vdots\\\y_S
			\end{pmatrix}
			\begin{pmatrix}
				x_0\\\x\\\y_1\\\vdots\\\y_S
			\end{pmatrix}\T\right) \right) = Q_{j}(\x,\y_1,\dots,\y_S), \ j \in \irg{1}{K}
		\end{align*}
	\end{thm}
	\begin{proof}
		Consider the following equivalences 
		\begin{align*}
			\Fb_i\x+\Gb_i\y_i&= \r_i,\ \y_i\in\KK_i,\ i \in\irg{1}{S}, \ \x\in\KK_0,\\
			Q_{j}(\x,\y_1,\dots,\y_S)&=0, \ j \in \irg{1}{K},\\
			&\Updownarrow\\
			\begin{Vmatrix}
				\left(-\r_i, \Fb_i,\Ob,\dots,\Gb_i,\dots,\Ob\right)
				\begin{pmatrix}
					x_0\\\x\\\y_1\\\vdots\\\y_S
				\end{pmatrix}
			\end{Vmatrix}^2& = 0,\ \y_i\in\KK_i,\ i \in\irg{1}{S}, \ \x\in\KK_0,\ x_0\geq 0 ,\ x_0^2 = 1,\\
			Q_{j}(\x,\y_1,\dots,\y_S)&=0, \ j \in \irg{1}{K},\\
			&\Updownarrow\\
			[\Xb] = \Gamma^{-1}\left(
			\begin{pmatrix}
				x_0\\\x\\\y_1\\\vdots\\\y_S
			\end{pmatrix}
			\begin{pmatrix}
				x_0\\\x\\\y_1\\\vdots\\\y_S
			\end{pmatrix}\T\right),\ [\Fb_i]\odot[\Xb]& = 0,\ \y_i\in\KK_i,\ i \in\irg{1}{S}, \ \x\in\KK_0,\ x_0\geq 0 ,\ x_0^2 = 1,\\
			\hat{Q}_{j}([\Xb]) &= 0 \ j\in \irg{1}{K}\\
			&\Updownarrow\\
			[\Hb_0]\odot[\Xb] &= 1,\\
			[\Fb_i]\odot[\Xb] &= 0,\ i \in \irg{1}{S},\\
			\hat{Q}_{j}([\Xb]) &= 0 \ j\in \irg{1}{K}, \\
			[\Xb] &\in \mathrm{gen}\CMP\left(\left(\R_+\times\KK_0\right),\KK_1,\dots,\KK_S\right).
		\end{align*}
		Invoking \cref{thm:KimsGeneralGeometricApproach}, we specify 
		\begin{align*}
			\mathbb{K} &= \mathrm{gen}\CMP\left(\left(\R_+\times\KK_0\right),\KK_1,\dots,\KK_S\right), \\
			\mathbb{H} &= \lrbr{[\Xb]\in\SS_{n_1+n_2+1}^{S,n_1+1}\colon [\Hb_0]\odot[\Xb] = 1},
		\end{align*}
		and \blue{we need to show that }
		\begin{align*}
			\mathbb{J} &= \lrbr{[\Xb]\in\CMP\left(\left(\R_+\times\KK_0\right),\KK_1,\dots,\KK_S\right)\colon 
				\begin{array}{l}
					[\Fb_i]\odot[\Xb] = 0,\ i \in \irg{1}{S}\\
					\hat{Q}_{j}\left([\Xb]\right) = 0, \ j\in \irg{1}{K}
			\end{array}},
		\end{align*} is a face of $\CMP\left(\left(\R_+\times\KK_0\right),\KK_1,\dots,\KK_S\right)$. By \cref{thm:IdentifyFacesofK}, this will follow if we can show that $[\Fb_i]\odot[\Xb]\geq 0, \ \forall [\Xb]\in \CMP\left(\left(\R_+\times\KK_0\right),\KK_1,\dots,\KK_S\right),\ i \in \irg{1}{S}$ and that $\hat{Q}_{j}([\Xb])\geq 0,\ j \in \irg{1}{K}$ whenever $[\Xb]$ fulfills the homogeneous and conic constraints in the description of the feasible set of the conic optimization problem. We will first show, that the statement of the theorem \blue{would hold} if the quadratic constraints were omitted. Indeed for any of the $[\Fb_i]$ and any  $[\Xb]\in\mathrm{gen}\CMP\left(\left(\R_+\times\KK_0\right),\KK_1,\dots,\KK_S\right)$ we have 
		\begin{align*}
			\hspace{-1cm}
			\begin{split}
				[\Fb_i]\odot[\Xb] =& \left(\left(-\r_i, \Fb_i,\dots,\Gb_i,\dots\right)\T\left(-\r_i, \Fb_i,\dots,\Gb_i,\dots\right)\right)\bullet
				\begin{pmatrix}
					x_0^2 & x_0\x\T & x_0\y_1\T & \dots & x_0\y_S\T\\
					x_0\x&\x\x\T & \x\y_1\T &\dots & \x\y_S\T \\ 
					x_0\y_1&\y_1\x\T & \y_1\y_1\T  & \dots & \Ob \\
					\vdots&\vdots& \vdots & \ddots & \vdots \\
					x_0\y_S&\y_S\x\T & \Ob & \dots  & \y_S\y_S\T
				\end{pmatrix}\\
				=&\begin{pmatrix}
					-\r_i,\Fb_i,\Gb_i
				\end{pmatrix}\T\begin{pmatrix}
					-\r_i,\Fb_i,\Gb_i
				\end{pmatrix}\bullet
				\begin{pmatrix}
					x_0^2 & x_0\x\T & x_0\y_i\T\\
					x_0\x&\x\x\T & \x\y_i\T\\ 
					x_0\y_i&\y_i\x\T &\y_i\y_i\T
				\end{pmatrix}\\
				=& \begin{Vmatrix}
					\begin{pmatrix}
						-\r_i,\Fb_i,\Gb_i
					\end{pmatrix} 
					\begin{pmatrix}
						x_0\\ \x\\ \y_i
					\end{pmatrix}
				\end{Vmatrix}^2\geq 0.
			\end{split}
		\end{align*}
		To complete the first part of the argument we need to show that the feasible set is bounded. To this end we consider its recession cone which by \cite[Corollary 8.3.3.]{rockafellar_convex_2015} is  given by 
		\begin{align*}
			\hspace{-1cm}
			0^+\FF\coloneqq \lrbr{[\Xb] \in \CMP\left(\left(\R_+\times\KK_0\right),\KK_1,\dots,\KK_S\right) \colon 
				[\Hb_0]\odot[\Xb] = 0,\
				[\Fb_i]\odot[\Xb] = 0,\ i \in \irg{1}{S}
			}.
		\end{align*}
		Take an arbitrary $\left[\Xb\right]\in 0^+\FF$, then 
		\begin{align*}
			\left[\Fb_i\right]\odot\left[\Xb\right] &= \sum_{l=1}^{k}\gl_l
			\begin{Vmatrix}
				\begin{pmatrix}
					-\r_i,\Fb_i,\Gb_i
				\end{pmatrix} 
				\begin{pmatrix}
					x^0_l\\\x_l\\\y^i_l
				\end{pmatrix}
			\end{Vmatrix}
			^2 = 0, \ i \in \irg{1}{S} \mbox{ and }\\
			\left[\Hb_0\right]\odot\left[\Xb\right] &= \sum_{l=1}^{k} \left(x^0_l\right)^2 = 0, \mbox{ implying that } x^0_l = 0,\ l \in \irg{1}{k}. 
		\end{align*}
		Thus, for any $i \in \irg{1}{S}$ and $l\in\irg{1}{k}$ we have $\Fb_i\x_l+\Gb_i\y^i_l = \oo$ and $\left(0,\x_l,\y^i_l\right)\in \R_+\times\KK_0\times\KK_i$ so that we have a element of the recession cone of $\FF_i$, which only contains the origin by the boundedness assumption, so that $\left[\Xb\right] = \left[\Ob\right]$. So far our arguments imply that 
		\begin{align*}
			\hat{\mathbb{J}} \coloneqq \lrbr{[\Xb]\in\CMP\left(\left(\R_+\times\KK_0\right),\KK_1,\dots,\KK_S\right)\colon [\Fb_i]\odot[\Xb] = 0,\ i \in \irg{1}{S}}
		\end{align*}  
		is a face of $\mathbb{K}$, hence its extreme points correspond to extreme rays of $\mathbb{K}$ by \cref{thm:KimsGeneralGeometricApproach}, that is \linebreak  $\mathrm{gen}\CMP\left(\left(\R_+\times\KK_0\right),\KK_1,\dots,\KK_S\right)$. But then (\ref{asmp:KeyAssumption}) implies that  $\hat{Q}_{j}([\Xb])\geq 0,\ j\in \irg{1}{K}$ whenever $[\Xb]\in\hat{\mathbb{J}}$ so that by \cref{thm:IdentifyFacesofK} the set $\mathbb{J}$ is a face of $\mathbb{K}$ and our theorem follows from \cref{thm:KimsGeneralGeometricApproach}. 
	\end{proof}
	
	While the above representation of the conic problem is convenient for the application of Theorems \ref{thm:KimsGeneralGeometricApproach} and \ref{thm:IdentifyFacesofK} and the statement of the proof, we can use \cite[Proposition 3]{burer_copositive_2012} in order to present it in a more familiar form: 
	\begin{align}\label{eqn:CMPsimple}
		\begin{split}
			\min_{\Xb,\Yb_i,\Zb_i,\x,\y_i} \Ab_i\bullet\Xb + \aa\T\x&+\sum_{i=1}^{S} \left[\Bb_i\bullet\Zb_i+ \Cb_i\bullet\Yb_{i} +\cc_i\T\y\right]\\
			\mathrm{s.t.:}\  \Fb_i\x+\Gb_i\y_i &= \r_i, \quad \hspace{0.7cm} i\in\irg{1}{S},\\
			\diag\left(
			\begin{pmatrix}
				\Fb_i & \Gb_i
			\end{pmatrix}
			\begin{pmatrix}
				\Xb & \Zb_i\T\\\Zb_i & \Yb_i
			\end{pmatrix}
			\begin{pmatrix}
				\Fb_i\T\\ \Gb_i\T
			\end{pmatrix}\right) &= \r_i\circ\r_i, \quad i\in\irg{1}{S},\\
			\hat{Q}_{j}(\x,\Xb,\y_1,\Zb_1,\Yb_1,\dots,\y_S,\Zb_S,\Yb_S) &= 0, \quad \hspace{0.6cm} \ j\in\irg{1}{K},\\
		\left[
		\begin{pmatrix}
			1  & \x\T & \y_i\T\\
			\x &\Xb & \Zb_i\T \\ 
			\y_i &\Zb_i & \Yb_i
		\end{pmatrix}\right]_{i\in\irg{1}{S}} &\in \CMP\left(\left(\R_+\times\KK_0\right),\KK_1,\dots,\KK_S\right).
		\end{split}
	\end{align}

	Before discussing this new type of conic reformulation, we want to point out, that there is a another way to prove \cref{thm:DecomposableQCQPExactCMPReformulation}. First, we make the following observation:
	\begin{thm}\label{thm:CharacterizationOfCompletability}
		The partial matrix  
		\begin{align*}
			\Mb_{*}\coloneqq 
			\begin{pmatrix}
				\Xb & \Zb_1\T &\dots & \Zb_S \\ 
				\Zb_1 & \Yb_1  & \dots & * \\
				\vdots& \vdots & \ddots & \vdots \\
				\Zb_S & * & \dots  & \Yb_S
			\end{pmatrix},
		\end{align*}
		is completable to a matrix in $\CPP(\KK_0\times_{i=1}^S\KK_i)$ if and only if there are decompositions 
		\begin{align*}
			\begin{pmatrix}
				\Xb & \Zb_i\T \\ \Zb_i & \Yb_i
			\end{pmatrix} = 
			\begin{pmatrix}
				\bar{\Xb}\bar{\Xb}\T  & \bar{\Xb}\bar{\Yb}_i\T \\ \bar{\Yb}_i\bar{\Xb}\T & \bar{\Yb}_i\bar{\Yb}_i\T
			\end{pmatrix},  \mbox{ with }
			\begin{pmatrix}
				\bar{\Xb}\\	\bar{\Yb}_i
			\end{pmatrix} \in \KK_0^{r}\times\KK_i^{r} , \ i\in \irg{1}{S}, \ r\in\N,
		\end{align*}
		hence, if and only if 
		\begin{align*}
				\left[\begin{pmatrix}
				\Xb & \Zb_i\T \\ 
				\Zb_i & \Yb_i
			\end{pmatrix}\right]_{i\in\irg{1}{S}} &\in \CMP\left(\left(\R_+\times\KK_0\right),\KK_1,\dots,\KK_S\right).
		\end{align*}
	\end{thm}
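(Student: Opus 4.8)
The plan is to prove the stated characterization as a chain of two equivalences: first, that completability of $\Mb_*$ to a matrix in $\CPP(\KK_0\times_{i=1}^S\KK_i)$ is equivalent to the existence of the displayed block factorization with a \emph{shared} left factor $\bar{\Xb}$; and second, that this factorization is nothing more than an unfolding of the definition of $\CMP$. The first equivalence carries the genuine content, while the second is essentially bookkeeping once the generators of $\CMP$ are written out. Throughout I would use the factorised description $\CPP_n(\KK)=\{\Wb\Wb\T : \text{columns of }\Wb\in\KK\}$ supplied in the introduction.

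For the first equivalence, in the ``only if'' direction I would start from a completion $\Mb\in\CPP(\KK_0\times_{i=1}^S\KK_i)$ that agrees with $\Mb_*$ on the specified blocks, and write $\Mb=\Wb\Wb\T$ with every column of $\Wb$ in $\KK_0\times_{i=1}^S\KK_i$. Partitioning $\Wb$ into row blocks $\bar{\Xb},\bar{\Yb}_1,\dots,\bar{\Yb}_S$ according to the factors $\KK_0,\KK_1,\dots,\KK_S$, the specified $(0,0)$-, $(0,i)$- and $(i,i)$-blocks of $\Wb\Wb\T$ are exactly $\bar{\Xb}\bar{\Xb}\T$, $\bar{\Yb}_i\bar{\Xb}\T$ and $\bar{\Yb}_i\bar{\Yb}_i\T$, which yields the claimed decompositions with $r$ the number of columns of $\Wb$; the memberships $(\bar{\Xb}\T,\bar{\Yb}_i\T)\T\in\KK_0^r\times\KK_i^r$ are read directly off the columns of $\Wb$. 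Conversely, given the decompositions with a common $\bar{\Xb}$ and common $r$, I would simply stack $\Wb\coloneqq(\bar{\Xb}\T,\bar{\Yb}_1\T,\dots,\bar{\Yb}_S\T)\T$: each of its columns lies in $\KK_0\times_{i=1}^S\KK_i$ precisely because the top block $\bar{\Xb}$ is the same for all $i$, so $\Wb\Wb\T\in\CPP(\KK_0\times_{i=1}^S\KK_i)$, its specified blocks reproduce $\Xb,\Zb_i,\Yb_i$, and the off-diagonal blocks $\bar{\Yb}_i\bar{\Yb}_j\T$ ($i\neq j$) furnish a valid filling of the unspecified entries.

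For the second equivalence, reading the decompositions column-wise gives
\[
\left[\begin{pmatrix}\Xb & \Zb_i\T\\ \Zb_i & \Yb_i\end{pmatrix}\right]_{i\in\irg{1}{S}} = \sum_{l=1}^{r}\left[\pvectwo{\bar{\x}_l}{\bar{\y}^i_l}\pvectwo{\bar{\x}_l}{\bar{\y}^i_l}\T\right]_{i\in\irg{1}{S}},
\]
where $\bar{\x}_l,\bar{\y}^i_l$ are the $l$-th columns of $\bar{\Xb},\bar{\Yb}_i$ and $(\bar{\x}_l,\bar{\y}^i_l)\in\KK_0\times\KK_i$. Each summand is a generator of $\CMP$, and since the generator set is invariant under nonnegative scaling (the ground cones being cones), $\CMP$ is a convex cone and any finite sum of generators — in particular the right-hand side — lies in it; conversely a point of $\CMP$ is a finite convex, hence conic, combination of such generators, and absorbing the weights into the vectors and collecting them columnwise into $\bar{\Xb}$ and $\bar{\Yb}_i$ recovers the factorization. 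The one genuinely substantive point, and the step I expect to require the most care, is the insistence on a \emph{single} shared factor $\bar{\Xb}$ across all $S$ blocks: this is exactly what separates joint completability to one $\CPP$ matrix from the $2\times2$ blocks being independently completely positive, and it is the reason the stacking in the ``if'' direction produces columns in the product cone. I would also note the harmless notational matching that the first ground cone in the $\CMP$ expression carries the homogenising factor $\R_+$ (so the role of $\KK_0$ there is played by $\R_+\times\KK_0$), which leaves every step above unchanged.
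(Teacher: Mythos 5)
Your proposal is correct and follows essentially the same route as the paper: the ``if'' direction stacks $\bar{\Xb},\bar{\Yb}_1,\dots,\bar{\Yb}_S$ and uses the off-diagonal products $\bar{\Yb}_i\bar{\Yb}_j\T$ as the completion, while the ``only if'' direction reads the shared-factor decompositions off a factorization $\Mb=\Wb\Wb\T$ of the completed matrix. The only difference is that you explicitly carry out the column-wise bookkeeping identifying the factorization with membership in $\CMP$ (including the scaling argument that turns conic combinations of generators into convex ones), a step the paper treats as immediate from the definition.
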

	\begin{proof}
		Given said decompositions we can create a matrix 
		\begin{align*}
			\begin{pmatrix}
				\bar{\Xb} \\ \bar{\Yb}_1 \\ \vdots \\ \bar{\Yb}_S
			\end{pmatrix} \mbox{ for which }
			\begin{pmatrix}
				\bar{\Xb} \\ \bar{\Yb}_1 \\ \vdots \\ \bar{\Yb}_S
			\end{pmatrix}
			\begin{pmatrix}
				\bar{\Xb} \\ \bar{\Yb}_1 \\ \vdots \\ \bar{\Yb}_S
			\end{pmatrix}\T = 
			\begin{pmatrix}
				\bar{\Xb}\bar{\Xb}\T & \bar{\Xb}\bar{\Yb}_1\T& \dots & \bar{\Xb}\bar{\Yb}_S\T \\
				\bar{\Yb}_1\bar{\Xb}\T & \bar{\Yb}_1\bar{\Yb}_1\T& \dots &\bar{\Yb}_1\bar{\Yb}_S\T\\
				\vdots & \vdots & \ddots &\vdots\\
				\Yb_S\bar{\Xb}\T&\bar{\Yb}_S\bar{\Yb}_1\T&\dots& \bar{\Yb}_S\bar{\Yb}_S\T 
			\end{pmatrix}\in \CPP(\times_{i=0}^{S}\KK_i),
		\end{align*}
		is the desired completion of $\Mb_*$. Conversely, if $\Mb_{*}$ has a completion $\Mb\in\ \CPP(\times_{i=0}^{S}\KK_i)$ then by definition of the latter cone we have 
		\begin{align*}
			\Mb  = \begin{pmatrix}
				\bar{\Xb}\bar{\Xb}\T & \bar{\Xb}\bar{\Yb}_1\T& \dots & \bar{\Xb}\bar{\Yb}_S\T \\
				\bar{\Yb}_1\bar{\Xb}\T & \bar{\Yb}_1\bar{\Yb}_1\T& \dots &\bar{\Yb}_1\bar{\Yb}_S\T\\
				\vdots & \vdots & \ddots &\vdots\\
				\Yb_S\bar{\Xb}\T&\bar{\Yb}_S\bar{\Yb}_1\T&\dots& \bar{\Yb}_S\bar{\Yb}_S\T 
			\end{pmatrix},
		\end{align*}
		so  that 	
		\begin{align*}
			\begin{pmatrix}
				\Xb & \Zb_i\T \\ \Zb_i & \Yb_i
			\end{pmatrix} = 
			\begin{pmatrix}
				\bar{\Xb}\bar{\Xb}\T  & \bar{\Xb}\bar{\Yb}_i\T \\ \bar{\Yb}_i\bar{\Xb}\T & \bar{\Yb}_i\bar{\Yb}_i\T
			\end{pmatrix},  \mbox{ with }
			\begin{pmatrix}
				\bar{\Xb}\\	\bar{\Yb}_i
			\end{pmatrix} \left(\KK_0\times\KK_i\right)^r, \ i\in \irg{1}{S},\ r\in \N.
		\end{align*}
	\end{proof}
	\begin{rem}\label{rem:RemarkonCMP}
		The \blue{theorem} is easily derived, but it highlights the key difficulty for the construction of a completion of the arrow-head arrangement of a set of matrix blocks connected by a common submatrix $\Xb$. If all of the blocks have representations as convex-conic combinations (i.e.\ nonnegative linear combinations) where the parts of the representations that form the connecting $\Xb$-component are identical for all blocks, obtaining the completion is simply a matter of concatenating the individual factors of the decompositions. However, there is no guarantee that decompositions that are coordinated in this manner do exist. \blue{Nonetheless, we can now clearly see the correspondence between $\CMP$ and $\CPP$ hinted at, at the end of \cref{sec:The space of connected}: From any matrix in $\CPP$, if its ground cone is given by $\times_{i=0}^S\KK_i$,  one can carve out an arrow head shaped partial matrix $\Mb_*$ so that $\Gamma_{*}^{-1}(\Mb_*)$ is an element of $\CMP$. Conversely, such an element uniquely corresponds to a partial arrow head matrix that can be completed to at least one element of $\CPP$.} 	
	\end{rem}
	
	Now, it is clear that the following optimization problem is equivalent to (\ref{eqn:DecomposableQCQPBurer}): 
	\begin{align}
		\begin{split}
			\min_{\Xb,\Yb_i,\Zb_i,\x,\y_i} &\Ab_i\bullet\Xb + \aa\T\x+\sum_{i=1}^{S} \left[\Bb_i\bullet\Zb_i+ \Cb_i\bullet\Yb_{i} +\cc_i\T\y\right]\\
			\mathrm{s.t.:}\ &\mbox{ the linear constraints of (\ref{eqn:DecomposableQCQPBurer}) hold and } \\
			&\begin{pmatrix}
				1 & \x\T &\y_1\T & \dots & \y_S\T \\
				\x& \Xb & \Zb_1\T & \dots & \Zb_S\T \\
				\y_1&\Zb_1 & \Yb_{1} & \dots &* \\
				\vdots&\vdots&\vdots&\ddots & \vdots \\
				\y_S & \Zb_S & * &\dots & \Yb_{S}
			\end{pmatrix} \mbox{ can be completed to a matrix in } \CPP(\R_+\times_{i=0}^S\KK_i),
		\end{split}
	\end{align}
	but the latter constraint holds whenever the conic constraint in (\ref{eqn:CMPsimple}) holds. Thus, we can close the relaxation gap between (\ref{eqn:CMPsimple}) and (\ref{eqn:DecomposableQuadraticProblem}) by appealing to Burer's reformulation and \cref{thm:CharacterizationOfCompletability}. However, we believe it is valuable to have a direct proof that is solely based on the geometry of $\CMP$ and does not explicitly reference matrix completion. Firstly, we avoid referencing something abstract, namely completability, by invoking something relatively concrete, i.e.\ the geometry of the respective convex cone. Secondly, the proof shows that the homogenized feasible set of (\ref{eqn:CMPsimple}) is a face of the respective instance of $\CMP$, which may be a useful insight for future investigations of this object. Finally, the proof is a somewhat unexpected application of  the theory laid out in \cite{kim_geometrical_2020}, which may inspire similar approaches to convex reformulations where a desired property, in our case completability, is inscribed in the structure of the cone $\mathbb{K}$.  
	  
	To summarize, the reformulation we obtained is similar to the one obtainable from \cite{burer_copositive_2009} in that it is a linear-conic optimization problem over an appropriately structured convex cone. The advantage of our reformulation is that the number of variables is $S(n_1+n_2)(n_1+n_2+1)/2$, while for the traditional approach this number would be $(n_1+Sn_2)(n_1+Sn_2+1)/2$, which is a bigger number if $S$ is big enough. However, similarly to $\CPP$, we cannot directly optimize over $\CMP$ since no workable description is yet known for this novel object. We therefore propose the following strategy.
	
	\subsection{A new strategy for sparse conic reformulations}
	
	As stated before, optimizing over $\CMP$ necessitates the applications of appropriate inner and outer approximations of that cone. On the one hand, we thus look for necessary conditions $[\Mb]\in\SS^{S,k}_n$ has to meet lest completing $\Gamma_{*} \left([\Mb]\right)$ to a matrix in the respective $\CPP$-cone is impossible, and we denote the subset of connected components that meet these conditions by $\CC_{nes}\supseteq \CMP$. On the other hand we look for subsets $\CC_{suf}\subseteq \CMP$, in other words, we look for sufficient conditions on a connected component $[\Mb]\in\SS^{S,k}_n$ so that $\Gamma_{*} \left([\Mb]\right)$ is in fact completable. 
	
	As we will show in the next section such necessary and/or sufficient conditions can be formulated in terms of $\CPP$ constraints. Such constraints are again intractable in general so we need an additional step in order to take advantage of these approximations. Set-compeltely positive matrix cones are very well studied objects and strong inner and outer approximations feature prominently in the existing literature \blue{(see \cite{bomze_copositive_2012} for extensive discussion)}. These approximations can thus be used to find tractable approximations of $\CC_{suf}$ and $\CC_{nes}$. More precisely, whenever we describe $\CC_{nes}$ via set-completely postive constraints, we can loosen these constraints via tractable outer approximation of $\CPP$ as to obtain a new set $\CC_{outer}\supseteq\CC_{nes}$. Conversely, replacing $\CPP$ in the description $\CC_{suf}$ with a tractable inner approximation we obtain an inner approximation $\CC_{inner}\subseteq\CC_{suf}$. In total we get:
	
	$$\CC_{inner} \ \subseteq \ \CC_{suf} \ \subseteq \ \CMP \ \subseteq \CC_{nes} \ \subseteq \ \CC_{outer},$$
	hence, tractable inner and outer approximations of $\CMP$. 
	
	The two step nature of our proposed approximation procedure stems from the fact that there are two sources of difficulty that necessitate resorting to approximations. The first one is the requirement of completability, which is addressed by the inner two of the above inclusions. The second one is the requirement of set-completely positivity, addressed by the outer two of the above inclusions. 
	
	Hence, whenever we approximately solve (\ref{eqn:CMPsimple}) by replacing $\CMP$ by its tractable inner and outer approximations we incur a relaxation gap that consists of two components. The portion of the gap \blue{that} results from a failure of meeting the completability requirement we henceforth refer to as \textit{completability gap}, while the portion of the gap the stems from the approximation error caused by the relaxation of the $\CPP$ constraints will be refered to as the \textit{completepositivity gap}.   
	
	In the next section we will mostly be concerned with narrowing the completability gap by providing promising examples for $\CC_{nes}$ and $\CC_{suf}$. Also, most of the discussion in the rest of the article will focus on the quality of this gap. We will, however, also provide some references to approximations of $\CPP$, in order to give some orientation on how to narrow the completepositivity gap as well. In the section on our numerical experiments we will also show some strategies on how to bypass this gap entirely, albeit in limited cases.

	\section{Inner and  outer approximations of $\CMP$ based on set-completely positive matrix cones}\label{sec:Inner and  outer approximation of CMP based on set-completely positive matrix cones}
	
	Our goal in this section is to identify conditions on an element $[\Mb] \in \SS^{S,k}_n$ that are either sufficient or necessary for $\Gamma_*([\Mb])$ to have a set-completely positive completion. In the following discussion we will show that many such conditions can be given in terms of set-completely \blue{positive cone constraints.}   
	
	\subsection{An outer approximation via necessary conditions}

	For a vector of ground cones $\bar{\KK} \coloneqq \left(\KK_0,\dots,\KK_S\right)$, we define yet another generalization of the set-completely positive matrix cone 
	\begin{align*}
		\CPI\left(\bar{\KK}\right) &\coloneqq \lrbr{ 
			\left[
			\begin{pmatrix}
				\Xb & \Zb_1\T \\ \Zb_1 & \Yb_1
			\end{pmatrix},\dots,
			\begin{pmatrix}
				\Xb & \Zb_S\T \\ \Zb_S & \Yb_S
			\end{pmatrix} \right]
			\colon 
			\begin{pmatrix}
				\Xb & \Zb_i\T \\ \Zb_i & \Yb_i
			\end{pmatrix}\in \CPP(\KK_0\times\KK_i),\ i \in \irg{1}{S}
		},
	\end{align*}
	for which we can prove the following.

	\begin{thm}
		We have that $\CPI\left(\bar{\KK}\right)  \supseteq \CMP\left(\bar{\KK}\right)$.% is a convex superset. 
	\end{thm}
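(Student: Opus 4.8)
The plan is to prove the inclusion directly, exploiting that $\CMP\left(\bar{\KK}\right)$ is by definition the convex hull of its generators, together with the observation that $\CPI\left(\bar{\KK}\right)$ is itself a convex set containing every such generator. The inclusion then follows because the convex hull is the smallest convex set containing the generators.

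First I would check that $\mathrm{gen}\,\CMP\left(\bar{\KK}\right)\subseteq\CPI\left(\bar{\KK}\right)$. A generator of $\CMP\left(\bar{\KK}\right)$ has the form $\left[\begin{pmatrix}\x\\\y_i\end{pmatrix}\begin{pmatrix}\x\\\y_i\end{pmatrix}\T\right]_{i\in\irg{1}{S}}$ with $\left(\x\T,\y_i\T\right)\T\in\KK_0\times\KK_i$ for each $i$. Its $i$-th block is the rank-one matrix $\begin{pmatrix}\x\\\y_i\end{pmatrix}\begin{pmatrix}\x\\\y_i\end{pmatrix}\T$, which, by the very definition of the set-completely positive cone, is a member of $\CPP\left(\KK_0\times\KK_i\right)$. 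Since this holds for every $i\in\irg{1}{S}$, the generator satisfies the defining block-wise membership condition of $\CPI\left(\bar{\KK}\right)$.

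Second I would argue that $\CPI\left(\bar{\KK}\right)$ is convex. For $[\Ab],[\Bb]\in\CPI\left(\bar{\KK}\right)$ and $\gl\in[0,1]$, the combination $\gl[\Ab]\oplus(1-\gl)[\Bb]$ is again an element of $\SS^{S,k}_n$: because the operations $\oplus$ and scalar multiplication act block-wise through the linear isomorphism $\Gamma$, the shared upper-left block of the result is the corresponding convex combination of the common blocks of $[\Ab]$ and $[\Bb]$, hence is common to all $S$ blocks, so the arrowhead connected-component structure is preserved. Its $i$-th block equals $\gl A_i+(1-\gl)B_i$ with $A_i,B_i\in\CPP\left(\KK_0\times\KK_i\right)$; since each $\CPP\left(\KK_0\times\KK_i\right)$ is a convex cone, this block again lies in $\CPP\left(\KK_0\times\KK_i\right)$, so the combination belongs to $\CPI\left(\bar{\KK}\right)$.

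Combining the two steps yields $\CMP\left(\bar{\KK}\right)=\conv\left(\mathrm{gen}\,\CMP\left(\bar{\KK}\right)\right)\subseteq\CPI\left(\bar{\KK}\right)$. I do not anticipate a genuine obstacle: the statement merely records that demanding each arrowhead block be separately set-completely positive (while sharing the connecting component $\Xb$) is a relaxation of demanding one coordinated completely positive decomposition of the whole arrangement, which is precisely the gap highlighted in the remark following \cref{thm:CharacterizationOfCompletability}. The only point deserving care is verifying that convex combinations stay within $\SS^{S,k}_n$, i.e.\ that they preserve the shared-$\Xb$ structure, and this is immediate from the block-wise linearity of $\oplus$.
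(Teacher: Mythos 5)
Your proof is correct and follows essentially the same route as the paper: the paper likewise observes that each generator of $\CMP\left(\bar{\KK}\right)$ has blocks of the form $\begin{pmatrix}\x\\\y_i\end{pmatrix}\begin{pmatrix}\x\\\y_i\end{pmatrix}\T\in\CPP\left(\KK_0\times\KK_i\right)$ and then concludes by convexity of $\CPI\left(\bar{\KK}\right)$. Your write-up merely spells out the convexity verification that the paper leaves implicit, which is a harmless (and arguably welcome) addition.
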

	\begin{proof}
%		We start out by showing that $\CPI$ is a convex cone. To see this consider that for the sum of  two members $[\bar{\Xb_1}],[\bar{\Xb_2}]\in\CPI$ we have 
%		\begin{align*}
%			[\bar{\Xb_1}]\oplus[\bar{\Xb_2}] = &\left[
%			\begin{pmatrix}
%				\Xb^1 & (\Yb^1_i)\T \\ \Yb^1_i & \Zb^1_i
%			\end{pmatrix}\right]_{\inirgs} 			
%			\oplus
%			\left[
%			\begin{pmatrix}
%				\Xb^2 & (\Yb^2_i)\T \\ \Yb^2_i & \Zb^2_i
%			\end{pmatrix}\right]_{\inirgs} 			\\
%			=& \left[
%			\begin{pmatrix}
%				\Xb^1+\Xb^2 & (\Zb^1_i+\Zb^2_i)\T \\ (\Zb^1_i+\Zb^2_i) & \Yb^1_i+\Yb^2_i
%			\end{pmatrix} \right]_{\inirgs},
%		\end{align*}
%		where 
%		\begin{align*}
%			\begin{pmatrix}
%				\Xb^1+\Xb^2 & (\Zb^1_i+\Zb^2_i)\T \\ (\Zb^1_i+\Zb^2_i) & \Yb^1_i+\Yb^2_i
%			\end{pmatrix}\in \CPP^n ,\ i\in\irg{1}{S},
%		\end{align*}
%		and the top left entries are identical as required. 
	By setting $\Xb = \x\x\T,\ \Zb_i = \y_i\x,\ \Yb_i = \y_i\y_i\T$ we see that the generators of $\CMP$ are contained in $\CPI$ and by convexity $\CMP$ itself is contained.
	\end{proof}
		
	We thus have an outer approximations of $\CMP$ in terms of set-completely positive matrix blocks, which is convenient for approximately optimizing of over $\CMP$ since set-completely positive optimization is a well researched field.

	\subsection{Inner approximations via sufficient conditions}
	
	We define 
	\begin{align*}
		\CPS\left(\bar{\KK}\right) &\coloneqq \lrbr{ 
			\left[
			\begin{pmatrix}
				\Xb & \Zb_i\T \\ \Zb_i & \Yb_i
			\end{pmatrix} \right]_{\inirgs}
			\colon 
			\begin{pmatrix}
				\Wb_i & \Zb_i\T \\ \Zb_i & \Yb_i
			\end{pmatrix}\in \CPP(\KK_0\times\KK_i),\ i \in \irg{1}{S},\ \sum_{i=1}^S \Wb_i=  \Xb 
		}.
	\end{align*}
	While it is not immediately obvious, the above cone is in fact a subset of $\CMP$. In fact the generators of $\CPS$ are a subset of the generators of $\CMP$ as we will now show. 
	
	\begin{thm}\label{thm:CPS}
		$\CPS\left(\bar{\KK}\right)\subseteq\CMP\left(\bar{\KK}\right)$ if $\KK_i, \ i \in \irg{1}{S}$ contain the origin. 
	\end{thm}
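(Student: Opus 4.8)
The plan is to show directly that every element of $\CPS(\bar{\KK})$ is a finite nonnegative combination of generators of $\CMP(\bar{\KK})$. Since the tuples parametrising $\mathrm{gen}\CMP(\bar{\KK})$ range over the product cone $\KK_0\times\KK_1\times\cdots\times\KK_S$, scaling such a tuple by $\sqrt{t}$ scales the associated generator by $t\geq 0$; hence $\mathrm{gen}\CMP(\bar{\KK})$ is closed under nonnegative scaling and its convex hull $\CMP(\bar{\KK})$ is a convex cone. Consequently any finite nonnegative combination of generators again lies in $\CMP(\bar{\KK})$, so it suffices to produce one such representation for an arbitrary $[\Mb]\in\CPS(\bar{\KK})$.

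First I would unpack the definition of $\CPS(\bar{\KK})$: there are matrices $\Wb_i$ with $\sum_{i=1}^S\Wb_i=\Xb$ such that each block lies in the relevant set-completely positive cone, and I would apply the defining decomposition of that cone to write
\begin{align*}
\begin{pmatrix}\Wb_i & \Zb_i\T \\ \Zb_i & \Yb_i\end{pmatrix} = \sum_{l=1}^{k_i}\begin{pmatrix}\x_{i,l}\\\y_{i,l}\end{pmatrix}\begin{pmatrix}\x_{i,l}\\\y_{i,l}\end{pmatrix}\T, \qquad \begin{pmatrix}\x_{i,l}\\\y_{i,l}\end{pmatrix}\in\KK_0\times\KK_i.
\end{align*}
For each pair $(i,l)$ I then form a generator $G_{i,l}$ of $\CMP(\bar{\KK})$ out of the defining tuple whose shared vector is $\x_{i,l}\in\KK_0$, whose $i$-th entry is $\y_{i,l}\in\KK_i$, and whose remaining entries are the zero vector; concretely the $i$-th matrix component of $G_{i,l}$ is $\begin{pmatrix}\x_{i,l}\x_{i,l}\T & \x_{i,l}\y_{i,l}\T \\ \y_{i,l}\x_{i,l}\T & \y_{i,l}\y_{i,l}\T\end{pmatrix}$ and its $j$-th matrix component for $j\neq i$ is $\begin{pmatrix}\x_{i,l}\x_{i,l}\T & \Ob \\ \Ob & \Ob\end{pmatrix}$. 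This is the only place the hypothesis enters: padding the unused entries with $\oo$ requires $\oo\in\KK_j$ for all $j$, which is exactly the assumption that the ground cones contain the origin.

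To finish I would sum over all pairs and read off the $j$-th matrix component of $\sum_{i=1}^S\sum_{l=1}^{k_i}G_{i,l}$. The terms with $i=j$ reassemble $\sum_l(\x_{j,l},\y_{j,l})(\x_{j,l},\y_{j,l})\T = \begin{pmatrix}\Wb_j & \Zb_j\T\\ \Zb_j & \Yb_j\end{pmatrix}$, while the terms with $i\neq j$ carry the zero vector in the $j$-th entry and therefore contribute only $\x_{i,l}\x_{i,l}\T$ to the top-left block and nothing elsewhere, summing to $\sum_{i\neq j}\Wb_i$ there. Hence the top-left block of component $j$ equals $\Wb_j+\sum_{i\neq j}\Wb_i=\Xb$ and the off-diagonal and bottom-right blocks are precisely $\Zb_j$ and $\Yb_j$, so the total sum equals $[\Mb]$.

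I expect no analytic difficulty here; the argument is purely structural. The one point requiring care is the bookkeeping in the last step: one must check that it is exactly the gluing constraint $\sum_i\Wb_i=\Xb$ which forces the common top-left block $\Xb$ to reappear in every component once the zero-padded generators are superposed, and that the padding pollutes no off-diagonal or bottom-right block. It is also worth recording explicitly why $\CMP(\bar{\KK})$ is a convex cone, since that is what licenses passing from a finite nonnegative combination of generators to membership in $\CMP(\bar{\KK})$.
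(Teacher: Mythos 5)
Your proposal is correct and follows essentially the same route as the paper's proof: decompose each block of the $\CPS$ element via its $\CPP(\KK_0\times\KK_i)$ representation, zero-pad the factors into coordinated rank-one generators of $\CMP$ (this is where containing the origin is used, exactly as in the paper), and observe that the gluing constraint $\sum_i\Wb_i=\Xb$ makes the superposition reproduce the common top-left block in every component. The only cosmetic difference is organizational — you assemble explicit generators $G_{i,l}$ and sum, while the paper verifies the same identity blockwise — so there is nothing substantive to add.
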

	\begin{proof}
	 Let $[\bar{\Xb}] \in \CPS$, we need to  to show that 
		\begin{align*}
			\begin{pmatrix}
				\Xb & \Zb_i\T \\ \Zb_i & \Yb_i
			\end{pmatrix} = 
			\sum_{k=1}^r 
			\begin{pmatrix}
				\x^k\\ \y_i^k
			\end{pmatrix}
			\begin{pmatrix}
				\x^k\\ \y_i^k
			\end{pmatrix}\T, \ \mbox{with }
			\begin{pmatrix}
				\x^k\\ \y_i^k
			\end{pmatrix}\in\KK_0\times\KK_i, \ k \in \irg{1}{r},\ \inirgs.
		\end{align*}
		for some fixed $r\in\N$. The important aspect is that the decomposition of the $\Xb$-component does not change across $\inirgs$. We have  
		\begin{align*}
			\begin{pmatrix}
				\Wb_i & \Zb_i\T \\ \Zb_i & \Yb_i
			\end{pmatrix} = 
			\sum_{k=1}^{r_i} 
			\begin{pmatrix}
				\w_i^k\\ \y_i^k
			\end{pmatrix}
			\begin{pmatrix}
				\w_i^k\\ \y_i^k
			\end{pmatrix}\T \mbox{with }
			\begin{pmatrix}
				\w^k_i\\ \y_i^k
			\end{pmatrix}\in\KK_0\times\KK_i,\ k \in \irg{1}{r_i}, \ \inirgs.
		\end{align*}
		We can set $r = \sum_{i=1}^Sr_i$ and we have $\Xb = \sum_{i=1}^S\Wb_i = \sum_{i=1}^S\sum_{k=1}^{r_i} \w_i^k(\w_i^k)\T$ so that 
		\begin{align*}
			\begin{pmatrix}
				\Xb & \Zb_i\T \\ \Zb_i & \Yb_i
			\end{pmatrix} = 
			\sum_{k=1}^{r_i} 
			\begin{pmatrix}
				\w_i^k\\ \y_i^k
			\end{pmatrix}
			\begin{pmatrix}
				\w_i^k\\ \y_i^k
			\end{pmatrix}\T +
			\sum_{j\in \irg{1}{S}\setminus\lrbr{i}}
			\sum_{k=1}^{r_j} 
			\begin{pmatrix}
				\w_j^k\\ \oo
			\end{pmatrix}
			\begin{pmatrix}
				\w_j^k\\ \oo
			\end{pmatrix}\T.		
		\end{align*}
		with 
		\begin{align*}
			\begin{pmatrix}
				\w_i^k\\ \y_i^k
			\end{pmatrix}\in\KK_0\times\KK_i,\ k \in \irg{1}{r_i},\ 
			\begin{pmatrix}
				\w_j^k\\ \oo
			\end{pmatrix}\in\KK_0\times\KK_i,\ k \in \irg{1}{r_j}
			,\ j \in \irg{1}{S}\setminus\lrbr{i},
		\end{align*}
		where the last inclusion holds, since $\KK_i$ contain the origin. 
	\end{proof}
	
	For obtaining a second approximation, we can use a slight generalization of known results on matrix completion to obtain another inner approximation for the case $\KK_0=\R^{n_1}_+$. We define 
	\begin{align*}
		\hspace{-1cm}\mathcal{CBC}_k \left(\bar{\KK}\right) &\coloneqq \lrbr{ 
			\left[
			\begin{pmatrix}
				\Xb & \Zb_i\T \\ \Zb_i & \Yb_i
			\end{pmatrix} \right]_{\inirgs}
			\colon 
			\begin{pmatrix}
			x & \z_i\T \\ \z_i & \Yb_i
			\end{pmatrix}\in \CPP(\R_+\times\KK_i), 
			\begin{array}{l}
				\Zb_i = \z_i\e_k\T\\
				\Xb  = x\e_k\e_k\T 
			\end{array},
			\ i \in \irg{1}{S}\
		},
	\end{align*}
	and 
	\begin{align*}
		\mathcal{CBC}\left(\bar{\KK}\right) \coloneqq \sum_{k=1}^{n_1}\mathcal{CBC}_k \left(\bar{\KK}\right).
	\end{align*}
	Then we can prove the containment 
	\begin{thm}
		$\mathcal{CBC}\left(\bar{\KK}\right)\subseteq \CMP\left(\bar{\KK}\right)$ for $\KK_0=\R^{n_1}_+$.
	\end{thm}
	\begin{proof}
		Since convex cones are closed under addition the statement will follow if we show that 
		$\mathcal{CBC}_k\left(\bar{\KK}\right)\subseteq \CMP\left(\bar{\KK}\right)$ for any $k\in\irg{1}{n_1}$. For an element of $\CBC_k$ consider the partial matrix 
		\begin{align*}
			\Mb\coloneqq\begin{pmatrix}
				x & \z_1\T &\dots & \z_S\T \\ 
				\z_1 & \Yb_1  & \dots & * \\
				\vdots& \vdots & \ddots & \vdots \\
				\z_S & *  & \dots  & \Yb_S
			\end{pmatrix}, \mbox{for which by construction }
			\begin{pmatrix}
				x & \z_i\T \\ \z_i & \Yb_i  
			\end{pmatrix} \in \CPP\left(\R_+\times \KK_i\right) \mbox{ holds.}
		\end{align*}
	\blue{
	If $x=0$ then it follows that $\z_i = \oo, \  \inirgs$ in which case a completion of $\Mb$ to a matrix in $\CPP(\R_+\times_{i=1}^S\KK_i)$ is easily constructed by concatenating the zero vector with the decompositions of $\Yb_i= \bar{\Yb}_i\bar{\Yb}_i\T, \ \bar{\Yb}_i\in\KK_i^{r_i}, \ \inirgs$, where we can insert columns of zeros in case $r_i$ are not all identical.} Thus, we can assume $x=1$. We will proof that $\Mb$ can \blue{still} be completed to a member in $\CPP(\R_+\times_{i=1}^S\KK_i)$. The desired inclusion then follows since zero rows and columns can be added in order to obtain a member of $\CPP(\R_+^{n_1}\times_{i=1}^S\KK_i)$. 
	
	Our proof involves merely a slight adaptation of the argument used for the completion of partial completely positive matrices given in \cite{drew_completely_1998}, who considered the case where $\KK_i$ are all positive orthants. We show that such an assumption is unnecessary. Let us proceed by induction and start by showing that the first $(2n_2+1)\times (2n_2+1)$ principal submatrix of $\Mb$ can be completed to a matrix in $\CPP\left(\R_+\times\KK_1\times\KK_2\right)$. After a \blue{permutation}, this matrix can be written as
	\begin{align*}
		\bar{\Mb}\coloneqq\begin{pmatrix}
			\Yb_1 & \z_1  & \Xb\T \\
			\z_1\T & 1 & \z_2\T \\
			\Xb & \z_2  & \Yb_2 
		\end{pmatrix} 
	\end{align*}
	where we replaced the unspecified entries by $\Xb$. Observe that the submatrices 
	\begin{align*}
		\Mb_1 \coloneqq\begin{pmatrix}
			\Yb_1 & \z_1 \\ \z_1\T & 1
		\end{pmatrix}\in\CPP\left(\KK_1\times\R_+\right), \
		\Mb_2 \coloneqq\begin{pmatrix}
			1 & \z_2\T \\ \z_2 & \Yb_2
		\end{pmatrix}\in\CPP\left(\R_+\times\KK_2\right),
	\end{align*} 
	so that 
	\begin{align*}
		\Mb_1 = \sum_{l=1}^{m_1}
		\begin{pmatrix}
			\f_l \\ f^0_l
		\end{pmatrix}
		\begin{pmatrix}
			\f_l \\ f^0_l
		\end{pmatrix}\T \mbox{ with } 
	\begin{pmatrix}
		\f_i \\ f^0_i
	\end{pmatrix}\in \KK_1 \times \R_+,\\
	\Mb_2 = \sum_{k=1}^{m_2}
	\begin{pmatrix}
	  	g^0_k \\ \g_k
	\end{pmatrix}
	\begin{pmatrix}
		g^0_k \\ \g_k
	\end{pmatrix}\T \mbox{ with } 
	\begin{pmatrix}
		g^0_k \\ \g_k
	\end{pmatrix}\in\R_+\times \KK_2 .\\
	\end{align*}
	\blue{Let us define $m_1m_2$ vectors as follows:}
	\begin{align}
		\v_{lk} \coloneqq
		\begin{pmatrix}
			 g^0_k\f_l \\
			f^0_lg^0_k\\
			f^0_l\g_k
		\end{pmatrix} \in \KK_1\times \R_+ \times\KK_2, \ l \in\irg{1}{m_1}, \ k \in\irg{1}{m_2}.
	\end{align}
	Then the matrix $\sum_{k,l}\v_{lk}\v_{lk}\T$ is the matrix $\bar{\Mb}$ with $\Xb =\z_2\z_1\T$. Hence, after undoing the perturbation, we generate the desired completion. For the $j$-th induction step we can repeat the argument with $\KK_1$ replaced by $\times_{i=1}^{j-1}\KK_1$ and $\KK_2$ replaced by $\KK_j$. 
	\end{proof}
	
	Finally, we present a simple, yet, as we will see in the numerical experiments, very effective inner approximation, which is applicable whenever $\KK_i\in\lrbr{\R^{n_2}_+,\ \R^{n_2}}, \ \inirgs$. Again, we express it as the sum of simpler cones given by 
	\blue{ 
		\begin{align*}
		\hspace{-1cm}\mathcal{DDC}_{k,s} \left(\bar{\KK}\right) &\coloneqq \lrbr{ 
			\left[
			\begin{pmatrix}
				\Xb & \Zb_i\T \\ \Zb_i & \Yb_i
			\end{pmatrix} \right]_{\inirgs}
			\colon 
			\begin{pmatrix}
				\Xb & \z \\ \z\T & y
			\end{pmatrix}\in \CPP(\KK_0\times\R_+), 
			\begin{array}{l}
				\Zb_s = \z\e_k\T,\\
				\Yb_s = y\e_k\e_k\T,\\
				\Yb_i = \Ob,\  \inirgs\setminus\lrbr{s},\\
				\Zb_i = \Ob,\  \inirgs\setminus\lrbr{s}
			\end{array}
		}.
	\end{align*}
	}
	We can then define  
	\begin{align*}
		\mathcal{DDC} \left(\bar{\KK}\right) \coloneqq \sum_{s=1}^{S}\sum_{k=1}^{n_2}\mathcal{DDC}_{k,s} \left(\bar{\KK}\right),
	\end{align*}
	about which the following statement is easily proved. 
	
	\begin{thm}
		We have $\mathcal{DDC} \left(\bar{\KK}\right) \subseteq \CMP \left(\bar{\KK}\right)$ if $\KK_i\in\lrbr{\R^{n_2}_+,\ \R^{n_2}}, \ \inirgs$.
	\end{thm}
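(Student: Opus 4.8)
The plan is to follow the template of \cref{thm:CPS} almost verbatim, the only new ingredient being the coordinate selection performed by $\e_k$. First I would record that $\CMP(\bar\KK)$ is a convex cone: its generator set $\mathrm{gen}\,\CMP(\bar\KK)$ is invariant under the scaling $(\x,\y_i)\mapsto(\sqrt{\gl}\,\x,\sqrt{\gl}\,\y_i)$ for $\gl\ge 0$ — which stays inside $\KK_0\times\KK_i$ because these are cones — so $\CMP(\bar\KK)=\conv(\mathrm{gen}\,\CMP(\bar\KK))$ is a convex cone and hence closed under addition. Since $\mathcal{DDC}(\bar\KK)=\sum_{s=1}^{S}\sum_{k=1}^{n_2}\mathcal{DDC}_{k,s}(\bar\KK)$ is a Minkowski sum, it then suffices to prove $\mathcal{DDC}_{k,s}(\bar\KK)\subseteq\CMP(\bar\KK)$ for each fixed $s\in\irg{1}{S}$ and $k\in\irg{1}{n_2}$.

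Fixing such $s$ and $k$, I would take an arbitrary element of $\mathcal{DDC}_{k,s}(\bar\KK)$ and expand the governing block. By definition there is a factorization $\begin{pmatrix}\Xb & \z\T\\ \z & y\end{pmatrix}=\sum_{l=1}^{r}\begin{pmatrix}\u_l\\ t_l\end{pmatrix}\begin{pmatrix}\u_l\\ t_l\end{pmatrix}\T$ with $\u_l\in\KK_0$ and $t_l\ge 0$, so that $\Xb=\sum_l\u_l\u_l\T$, $\z=\sum_l t_l\u_l$ and $y=\sum_l t_l^2$, while the defining constraints force the $s$-th block to carry $\Yb_s=y\e_k\e_k\T$ and the off-diagonal $\Zb_s=\e_k\z\T$ (a single nonzero row $\z\T$ in position $k$), all remaining $\Zb_i,\Yb_i$ with $i\ne s$ vanishing.

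I would then exhibit the element as a nonnegative combination of $\CMP$-generators, one per index $l$, built from $\x=\u_l\in\KK_0$, $\y_s=t_l\e_k$, and $\y_i=\oo$ for $i\ne s$. This is where the hypothesis $\KK_i\in\lrbr{\R^{n_2}_+,\R^{n_2}}$ enters and is in fact the only place it is needed: since $t_l\ge 0$ and $\e_k\in\R^{n_2}_+$ we have $t_l\e_k\in\KK_s$, and $\oo\in\KK_i$ for every $i$, so each such tuple is a genuine generator. Its $s$-th block equals $\begin{pmatrix}\u_l\u_l\T & t_l\u_l\e_k\T\\ t_l\e_k\u_l\T & t_l^2\e_k\e_k\T\end{pmatrix}$ and each block with $i\ne s$ equals $\begin{pmatrix}\u_l\u_l\T & \Ob\\ \Ob & \Ob\end{pmatrix}$.

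Summing over $l$ finishes the argument: the common north-west block is $\sum_l\u_l\u_l\T=\Xb$ in every one of the $S$ components, the $s$-th off-diagonal accumulates to $\sum_l t_l\e_k\u_l\T=\e_k\z\T=\Zb_s$ and its diagonal to $\sum_l t_l^2\e_k\e_k\T=y\e_k\e_k\T=\Yb_s$, while every block with $i\ne s$ collapses to $\begin{pmatrix}\Xb & \Ob\\ \Ob & \Ob\end{pmatrix}$, exactly as required. The only point demanding care is this block bookkeeping — ensuring the rank-one second-stage mass is deposited in coordinate $k$ and that the shared $\Xb$-part is identical across all $S$ components — which is precisely the coordinated-decomposition phenomenon flagged in \cref{thm:CharacterizationOfCompletability}; here it holds by construction, so there is no real obstacle beyond careful indexing.
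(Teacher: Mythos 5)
Your proof is correct and takes essentially the same approach as the paper: reduce to a single $\mathcal{DDC}_{k,s}$ via the fact that the convex cone $\CMP$ is closed under addition, and then observe that each element lifts to $\CMP$ by the zero completion --- your explicit conic combination of generators built from a $\CPP(\KK_0\times\R_+)$ factorization of the governing block is precisely the rank-one decomposition witnessing the paper's terse claim that ``filling out the unspecified entries of $\Gamma_{*}([\Mb])$ with zeros'' works. Your version merely spells out the block bookkeeping (and quietly fixes the dimension slip in the paper's definition, writing $\Zb_s=\e_k\z\T$ rather than $\z\e_k\T$) that the paper leaves to the reader.
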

	\begin{proof}
	 	Since $\CMP$ is convex, it is enough to proof that $\mathcal{DDC}_{s,k} \left(\bar{\KK}\right) \subseteq \CMP \left(\bar{\KK}\right)$ for any $k\in\irg{1}{n_2}, \ s \in\irg{1}{S}$. For any $[\Mb]\in \mathcal{DDC}_{s,k} \left(\bar{\KK}\right)$ the required completion of $\Gamma_{*}([\Mb])$ is easily obtained by filling out the unspecified entries with zeros. 
	\end{proof}
	Note, that the statement remains true if we merely work with a selection of $\mathcal{DDC}_{k,s}$ in order to alleviate some \blue{of} the numerical burden. 
	
	All these inner and outer approximations we now discussed represent an effort to tackle the completability gap. But, as we laid out at the beginning of this section, they all have it in common that they are constructed using set-completely positive matrix cones, over which we cannot optimize directly. In this text we will discuss some instances where the completepositivity gap can be bypassed conveniently, so that we can focus on assessing the extent of the completability gap.

	\subsubsection{Limitations of the inner approximations of $\CMP$}\label{sec:Limitations of the inner approximations}
	We will now critically asses the strength of the inner approximations discussed above. Of course, an obvious limitation of $\mathcal{CBC}$ and $\mathcal{DDC}$  is that the $\Xb$ and the $\Yb_i$ components respectively can only be diagonal matrices. In case of $\mathcal{CBC}$, this has some undesirable consequences when approximating an exact reformulation of (\ref{eqn:DecomposableQuadraticProblem}) based on \cref{thm:DecomposableQCQPExactCMPReformulation}. 
	
	Obviously, if $\CMP$ is replaced by $\mathcal{CBC}$, then $\x = \oo$, since these values reside in off-diagonal of the north-west blocks. But then \cref{thm:DecomposableQCQPExactCMPReformulation} implies that $\x$ is the convex combination of some $\x_j,\ j \in \irg{1}{k}$ that are part of a feasible solution to (\ref{eqn:DecomposableQuadraticProblem}). Since the feasible set is bounded we get $\x_j= \oo$ as well, which eventually yields  $\Zb_i =  \sum_{j=1}^{k}\gl_j\y^i_j\x_j\T = \Ob$, so that the approximations eliminates these components entirely.  
	
	A similar deficiency can be identified for $\CPS$. To see this, note the from the proof of \cref{thm:CPS} we have that all extreme rays of $\CPS$ are in fact rank one, in the sense that all matrix components are rank one matrices. In other words the generators of $\CPS$ are a subset of the generators of $\CMP$. Hence, if $\CMP$ is replaced by $\CPS$ in (\ref{eqn:DecomposableQCQPCMPRefromulation}) the extreme points of the feasible \blue{set} can be shown to be rank one as well, by invoking a similar argument as in \cref{thm:DecomposableQCQPExactCMPReformulation}. If $\Yb_i,\y_i, \Wb_i, \ \inirgs$ are part of feasible extremal solution of the respective approximation we get
	\begin{align*}
		\begin{pmatrix}
			w_0^i  & \y_i\T \\ \y_i & \Yb_i
		\end{pmatrix}\in\SS^{n_2+1}_+, \ \inirgs, \quad \sum_{i=1}^{S}w^i_0 = 1,
	\end{align*}  
	where $w_0^i$ are the north-west entries of $\Wb_i,\ \inirgs$. By Schur complementation we get $\SS^{n_\blue{2}}_+ \ni w_0^i\Yb_i-\y_i\y_i\T = w_0^i\y_i\y_i\T-\y_i\y_i\T$, for any fixed $\inirgs$, which implies that either $w^0_i = 1$ or $\y_i = \oo$. Thus, the approximations based on $\CPS$ eliminates all but one of the $\Yb_i$ components and as a consequence all but one of the $\Zb_i$ components. Depending on the model at hand, this can be an advantage as we will see in the numerical experiments in \cref{sec:Numerical experiments}. However, in case $(\Fb_i,\ \Gb_i, \r_i)$ is identical across $\inirgs$, the approximations actually eliminates all $\Yb_i$ and $\Zb_i$ components. To see this, consider that in said case we have that $\diag(\Gb_i\Yb_i\Gb_i\T) = \Ob$ for one $i$ forces the same for all $i$, which by boundedness implies $\Yb_i = \Ob$, entailing $\Zb_i = \Ob$ for all $\inirgs$.  
	
	Despite these limitations we have found instances of (\ref{eqn:DecomposableQuadraticProblem}) where the inner approximations yield favorable results. We will discuss these instances in the next section, where we conduct numerical experiments assessing the efficacy of the inner and outer approximations.

	\section{Numerical experiments}\label{sec:Numerical experiments}
	
	As discussed in the introduction, the authors of \cite{bomze_two-stage_2022} tried to solve (\ref{eqn:St3QP}) using copositive reformulations, where they compared the traditional model akin to (\ref{eqn:DecomposableQCQPBurer}), with what we can now conceptualize as the $\CPI$ relaxation of the $\CMP$ reformulation of (\ref{eqn:St3QP}). For both models, they used their respective $\DNN$ relaxations in order to produce solutions.  For the purpose of certifying optimality, they exploited the fact that either relaxation also produces feasible solutions, hence upper bounds, since both leave the original space of variables in tact. For many instances, these bounds alone closed the optimality gap, but for some gaps persisted, even though they were narrowed by extensive polishing procedures, about which we will not go into detail here. What we are setting out to do in this section is revisiting these instances and new variants of them, in order to see if the bounds we introduced in this article can further narrow the optimality gap. 
	
	In what follows we will use Mosek as a conic optimization solver, and Gurobi as a global optimization solver, to both of which we interface via the YALMIP environment in Matlab (see \cite{Lofberg2004}). All experiments were run on a Intel Core i5-9300H CPU with 2.40GHz and 16GB of ram.

	In our epxeriments consider the following problem %generalizations of (\ref{eqn:St3QP})
	\begin{align}\label{eqn:St3QPgen} 
		v(\FF) \coloneqq \min_{\x\in\R^{n_1},\y_i\in\R^{n_2}} \lrbr{\x\T\Ab\x+ \sum_{i=1}^{S}p_i \left(\x\T\Bb_i\y_i + \y_i\T\Cb_i\y_i\right) \colon (\x,\bar{\y})\in \FF,},	
	\end{align}
 with the following specifications for $\FF$: 
	\begin{align*}
		\FF_1 &\coloneqq \lrbr{ 
			\begin{pmatrix}
				\x \\ \bar{\y}
			\end{pmatrix}\in \R^{n_1+Sn_2}_+ \colon \e\T\x+\e\T\y_i = 1,\ \inirgs
		}, \\
%		\FF_2 &\coloneqq \lrbr{ 
%		\begin{pmatrix}
%			\x \\ \bar{\y}
%		\end{pmatrix}\in \R^{n_1}\times\R^{Sn_2}_+ \colon \e\T\x+\e\T\y_i = 1,\ \inirgs, \ \|\x\|\leq 1 
%		 },\\
	 \FF_2 &\coloneqq \lrbr{ 
	 	\begin{pmatrix}
	 		\x \\ \bar{\y}
	 	\end{pmatrix}\in \lrbr{0,1}^{S}\times\R^{Sn_2} \colon \e\T\x = (S-1),\ \sum_{i=1}^{S}\y_i\T\y_i = 1,\ \y_ix_i = \oo,\ \inirgs 
	  },\\
 	 \FF_3 &\coloneqq \lrbr{ 
 		\begin{pmatrix}
 			\x \\ \bar{\y}
 		\end{pmatrix}\in \R^{n_1}_{\blue{+}}\times\R^{Sn_2} \colon \x\T\x \blue{+} \sum_{i=1}^{S}\y_i\T\y_i = 1 
 	}.
	\end{align*}
		where $\bar{\y} \coloneqq \left(\y_1,\dots,\y_S\right)$.
		
	The data for the objective functions coefficients were generated using the same two approaches as in \cite{bomze_two-stage_2022}. Next to setting $p_i = 1/S, \ \inirgs$, the following two schemes for generating the problem data have been implemented:
	
	\begin{itemize}
		\item[Scheme 1:] For the first one, we sample  $n_1+n_2$ points from the unit square. The first $n_1$ points are fixed and their mutual distances are used to populate the entries in $\Ab$. For the other $n_2$ points we assume that they are only known to lie in square with side length $2\eps$, where their position follows a uniform distribution. For these points $S$ samples are generated and for the $s$-th sample, the distances between them and the first $n_1$ points populate the entries of $\Cb_s$ and $\Bb_s$ respectively.  
		
		\item[Scheme 2:] For the second one, we choose $A_{ij} \sim \mathcal{U}_{\lrbr{0,1}}$, $B_{ij}\sim\mathcal{U}_{[0:10]}$, $C_{ij} \sim \mathcal{U}_{[0,0.1]}$, independently of each other, where $\mathcal{U}_{\mathcal{M}}$ is the uniform distribution with support  $\mathcal{M}$.
	\end{itemize}
	\blue{It was observed in \cite{bomze_two-stage_2022} that Scheme 2 consistently produced instances where the gap generated via the $\CPI$-approximation was large. Note, that in the experiments there, the authors focused exclusively on $\FF_1$. }
	
	We will now proceed with a discussion of the different instances of $\FF_i, \ i \in \irg{1}{\blue{3}}$, where we present the respective conic reformulations/relaxations and the inner and outer approximations of its sparse counterpart. Regarding the inner approximations, note that \blue{one could combine them by using Minkowski sums of the different cones. However,  for a given problem there will} only ever be one non-redundant approximation. The reason is that the linear functions attain the optimum at an extreme point of the feasible set and the extreme rays of a sum of cones are a subset of the extreme rays of the individual cones. Thus, for every $\FF_i, \ i \in \irg{1}{\blue{3}}$ we will discuss the merits of only one specific inner approximation at a time. The \blue{lower bounds} will be obtained by using $\CPI$ by default. The focus of the discussion will be the quality of the bounds obtained. Specifically, we are interested in assessing the completability gap, which necessitates guaranteeing a completepositivity gap of zero. We will discuss how the latter was achieved case by case.    
	
	\subsection{Using $\mathcal{DDC}$ under $\FF_1$}
	By choosing $\FF= \FF_1$ we are recovering the scenario problem for the two-stage stochastic standard quadratic optimization problem introduced in \cite{bomze_two-stage_2022}. In the experiments conducted there, a conic lower bound was used that is equivalent to the outer approximation of (\ref{eqn:DecomposableQCQPCMPRefromulation}) based on $\CPI$. Since the original space of variables is preserved, the conic relaxation also yielded an upper bound that conveniently closed the optimality gap for all instances generated by sc heme 1. However, the gaps generated by the $\CPI$-approximation were typically large. %The lower bounds however, were numerically almost identical to the ones obtainable from the traditional approach based on (\ref{eqn:DecomposableQCQPBurer}), which is known to perform well.
	%The authors were able to narrow the gap substantially using the upper bound to warm start local and global optimization procedures. 
	In this section we will test whether the gap can be also be improved by using the inner approximations introduced here. 
	
	Due to the limitations discussed in \cref{sec:Limitations of the inner approximations}, the only inner approximation that is meaningfully applicable here is the one based on $\mathcal{DDC}$. Thus, the approximation will involve $Sn_2$ constraints involving $\CPP(\R^{n_1+1}_+\times\R_+)$. In case $n_1+2\leq 4$ these constraints can be represented via semidefinite constraints, since 
	$\CPP(\R^n_+) = \SS^n_+\cap \NN^n\eqqcolon \DNN^n$ whenever $n\leq 4$, so that the completepositivity gap can be conveniently bypassed. If $n_1+2>4$ the relaxation based on $\DNN$ is an outer approximation of $\mathcal{DDC}$, which itself is an inner approximation of $\CMP$ so that we cannot qualify the resulting approximation as neither outer nor inner. However, the original space of variables stays in tact regardless so that in cases where $n_1+2>4$ we can still obtain another upper bound that potentially narrows the optimality gap.

	\subsection{Using $\CPS$ under $\FF_2$}
	
	The model encodes selecting one out of $S$ groups of variables to be nonzero and optimizing the objective using just these variables. \blue{The activation and deactivation of the different groups is modeled via the variable $\x$, so that $n_1 = S$ in this model.} While there are more straightforward ways of encoding this process, the one presented here is the one for which the conic bounds behaved most favorably.  
	 
	In order to obtain a $\CMP$ reformulation for computing $v(\FF_2)$ via \cref{thm:DecomposableQCQPExactCMPReformulation} we would have to do some prior adaption of the problem. First of all, $x_i\in \{0,1\}, \inirgs$ can be reformulated as quadratic constraints $x^2_i-x_i =0$, so that in order for the assumptions of the theorem to hold, we would have to introduce redundant constraints and additional variables given by $x_i+s_i = 1,\ s_i\geq0, \ \inirgs$. Secondly, in order for $\y_ix_i=0$ to fulfill said assumptions we would have to split each $\y_i$ into a positive and a negative component and enforce the constraints for both components. Lastly, the quadratic constraints would need to be absorbed into the a second order cone constraint.  Due to the introduction of this many variables, we would have no chance at bypassing the completepositivity gap. Thus, we will merely work with the  the following $\CMP$ based relaxation: 
	\begin{align}\label{eqn:CMPRelaxationofF3}
		\begin{split}
			\min_{\Xb,\Yb_i,\Zb_i,\x,\y_i} \Ab_i\bullet\Xb &+\sum_{i=1}^{S} \blue{p_i} \left[\Bb_i\bullet\Zb_i+ \Cb_i\bullet\Yb_{i} \right]\\
			\mathrm{s.t.:}\ \e\T\x & = (S-1), \\
			\e\e\T\bullet\Xb & = (S-1)^2, \\
			\blue{\diag\left(\Xb\right)}& =  \x,\\
			\sum_{i=1}^{S}\blue{\Ib\bullet}\Yb_i &= 1, \\
			\Zb_i\e_{\blue{i}}& = 0, \ \inirgs, \\ 
			\left[
			\begin{pmatrix}
				1  & \x\T & \y_i\T\\
				\x &\Xb & \Zb_i\T \\ 
				\y_i &\Zb_i & \Yb_i
			\end{pmatrix}\right]_{i\in\irg{1}{S}} &\in \CMP\left(\R_+^{n_1+1},\R^{n_2},\dots, \R^{n_2}\right).
		\end{split}
	\end{align}
	
	When working with the $\CPS$ based \blue{upper} bound, we obtain a problem with $S$ conic constraints involving $\CPP(\R_+^{n_1+1}\times\R^{n_2})$. Here we can use a result from \cite[Theorem 1]{natarajan_reduced_2017}, which states that  
	\begin{align}\label{eqn:Natarajan}
		\CPP(\R_+^{n_1+1}\times\R^{n_2}) = 
		\lrbr{
		\begin{pmatrix}
			\Mb_1  & \Mb_2\T \\ \Mb_2 & \Mb3
		\end{pmatrix}\in \SS^{n_1+n2+1}_{\blue{+}}\colon \Mb_1 \in \CPP(\R^{n_1+1}_+) 
		}.
	\end{align}
	This allows us to bypass the completepositivity gap whenever $n_1+1 \leq 4$.

	\blue{
	However, while the above problem is clearly a lower bound, we cannot proof that it is tight based on the theory we have discussed in this text. Thus, when we calculate the optimal values obtained based on inner and outer approximations of $\CMP$ we merely bound the optimal value of the the $\CMP$ relaxation. Nonetheless, the space of original variables $\x$ and $\y_i, \ \inirgs$ stays in tact for any of these approximations so that we can obtain upper bounds to the original problem and hence a valid optimality gap. It is however not as straight forward as in the previous model since the values of  $\x$ and $\y_i, \ \inirgs$ obtained from approximations of  the $\CMP$ relaxation do not necessarily fulfill the nonlinear constraints in $\FF_2$, as their counterparts in the relaxation are only imposed on the lifted variables. 
	
	We therefore have to employ some rounding in order to obtain feasible solutions. In order for $\x$ to be feasible for $\FF_2$ all entries have to be equal to one except for a single one, say the $j$-the entry, that is equal to zero. Also, all $\y_i, \ \inirgs\setminus\lrbr{j}$ are zero so that $\y_j\T\y_j = 1$. For a solution obtained from a relaxation we therefore round the smallest entry of $\x$, again say the $j$-th entry, down to zero, while the rest is rounded up to one. Similarly, all $\y_i$ are set to zero except for $\y_j$. We obtain a feasible value for this variable by dividing it by its norm so that eventually $\y_j\T\y_j = 1$ holds.
	
	We also want to point out that in this specific case the rounding procedure can actually improve the upper bound obtained from $\x$ and $\y_i, \ \inirgs$ compared to their infeasible pre-rounding values. This is counter intuitive at first, since usually, when we take optimal solutions of a relaxation, we have to sacrifice some performance in order to turn them into feasible solutions of the original problem. However, this intuition leads us astray in this instance. Remember that $\x$ and $\y_i, \ \inirgs$ do not appear in the objective of the $\CMP$ relaxation and its approximations. They are just some values that are needed in order to make the optimal choices one the other variables feasible and that do not have any specific relation with the optimal value of the approximation itself beyond that. Thus, changing these variables may take their implied objective function value of the original problem in either direction. 
	}

	\subsection{Using $\CBC$ under $\FF_3$}
	In order to bypass the weakness of $\CBC$ outlined in \cref{sec:Limitations of the inner approximations} we will work with a simplified, sparse, conic reformulation given by 
	\begin{align}\label{eqn:F3CBC}
		\begin{split}
			\min_{\Xb,\Yb_i,\Zb_i,\x,\y_i} \Ab_i\bullet\Xb +\sum_{i=1}^{S} \blue{p_i} \left[\Bb_i\bullet\Zb_i\right.&\left.+ \Cb_i\bullet\Yb_{i}\right]\\
			\mathrm{s.t.:}\
			 \Ib\bullet\Xb + \sum_{i=1}^{S}\Ib\bullet\Yb_i & = 1,\\
			\left[
			\begin{pmatrix}
				\Xb & \Zb_i\T \\ 
				\Zb_i & \Yb_i
			\end{pmatrix}\right]_{i\in\irg{1}{S}} &\in \CMP\left(\R^{n_1}_+,\R^{n_2},\dots, \R^{n_2}\right).
		\end{split}
	\end{align}
	\blue{Note, that we cannot apply \cref{thm:DecomposableQCQPExactCMPReformulation} directly here since the single quadratic constraint does not fulfill the assumption of the theorem as it can, after the constant is put on the left-hand side, take both positive and negative values over the remaining feasible set. However,} the fact that this is in fact a valid reformulation and not just a lower bound can be deduced from \cref{thm:KimsGeneralGeometricApproach} by choosing $\mathbb{H}$ to be the hyperplane corresponding to the one linear constraint that is present in (\ref{eqn:F3CBC}), and $\mathbb{J}$ to be all of $\CMP$. The boundedness of $\mathbb{J}\cap\mathbb{H}$ follows from the fact that the identity matrix $\Ib$ is positive definite. In this reformulation $\x$ is absent so that the problem lined out in \cref{sec:Limitations of the inner approximations} is mute. Of course, this comes at the cost, of having merely a single upper bound given by the optimal solution of the $\mathcal{CBC}$ approximation. However, since $\KK_i= \R^{n_2}, \ \inirgs$ we can use the fact that $\CPP(\R_+\times\R^n) = \SS^n_+$ (see \cite[Section 2]{bomze_interplay_2021}) in order to close the completepositivity gap regardless of the dimension of the problem. We also like to note, that \blue{in our experiments, the lower bound was often very close to zero}, which leads to optimality gaps being \blue{ reported as $\infty$}. In order to avoid this inconvenience we added 1 as a constant to the objective function.

	\subsection{Design of the experiments and results}
	\blue{
	Based on the models discussed above we conducted three experiments. In the first one, we wanted to asses the quality of the bounds obtained from our approximations. This was done by evaluating the global optimality gaps obtained from these bounds, but also by comparing these gaps to the global optimality gaps that a benchmark solver, namely Gurobi, can produce in a reasonable amount of time. Gurobi is a commercial solver that employs branch and bound and related strategies to solve QCQPs globally, which makes it a good benchmark for our procedures, as they produce bounds on the global solution as well.  
	
	As it turned out that the conic bounds can be calculated very time efficiently, we conducted a second experiment where we set the time limit for Gurobi to the time it took for the conic approach to produce the respective optimality gaps. This allows us to assess a potential gain in efficiency for global solvers, if they employed the bounds derived in this text as a pre-solving step. 
	
	Finally, we compared the sparse bounds with the bounds obtained from relaxations of the full model, i.e. the model where the full $\CPP$ constraint was present, as in (\ref{eqn:DecomposableQCQPBurer}), rather than its sparse counterpart based on $\CMP$. In \cite{bomze_two-stage_2022}, these experiments have been conducted for instances of $\FF_1$, where the authors observed close to no gap between the full models and the sparse models. We repeat these experiments for instances of $\FF_2$ and $\FF_3$ to test whether this phenomenon persists, but also to test whether the advantage of the sparse models with respect to computation time is also maintained for the inner approximations. }

	\subsection{Quality of the bounds}
	
	For each of the models we generated two types of \blue{instances}. For the first one we choose the dimension of the problem such, that the completepositivity gap could be bypassed and one where that is not the case. For the latter instances, we worked with outer approximations of the respective set-completely positive constraints. Hence, the $\CPI$ relaxation was further relaxed, so that  the resulting problem can be qualified as a valid lower bound. The relaxation of the inner approximation does not allow for such a qualification, since we obtain lower bound to an upper bound. However, the relaxation yields valid upperbounds as a byproduct since the original space of variables stays in tact for all but the $\CBC$ \blue{approximation} of $\FF_3$. However, for the latter the completepositivity gap can be bypassed regardless of the dimension of the problem data. For every choice on $(n_1,n_2,S)$ and $\F_i, \ i = 1,2,3$ we generated 10 instances from scheme 1 and 2 respectively. For every instance we calculated the $\CPI$ lower bound, the bounds and approximations based on the respective inner approximations, and in addition we used upper and lower bounds achieved by Gurobi within a 5 minute time limit. 
	
	The results are summarized in \cref{tbl:GapsTable}. The "instance-types" are indicated by a quadruple of the form $n_1\_ n_2\_S\_ s$, where $s\in\lrbr{1,2}$ indicates the scheme by which we constructed the instances. In the multi-column  "Conic Gaps" we report the average gap between the $\CPI$ lower bound and the feasible solution generated from the $\CPI$ bound (UB), the optimal value of the inner approximations (I) and the feasible solution generated from the latter approximations (IUB). \blue{Note, that we also considered instances of sizes for which we could not guarantee that the completepositivity gap is eliminated. In these cases the bounds based on the  inner approximations are not valid upper bounds, since they stem from relaxations of inner approximations. For these cases we still report the gaps, but they appear in the table in parenthesis. However, we do like to mention at this point that these "invalid upper bounds" never fell below any of the lower bounds we calculated, which suggests that the completepositivity gap is small at least for our experiments.}  For "Gurobi Gaps" we calculate these gaps with respect to the lower bound fond by Gurobi instead of the one obtained from $\CPI$. In addition we present the gap between the $\CPI$ based lower bound and the upperbound generated by Gurobi (O), and we also report the optimality gap obtained by Gurobi itself within the 5 minute time limit (G). All the gaps are reported in percentages relative to the respective lower bound. Finally, in the last two multi-columns, we count the number of times the conic and the Gurobi gaps were smaller then $0.01\%$, at which point we consider the instance solved.        
	
	For the experiments on $\FF_1$ we see that \blue{the} phenomenon already documented in \cite{bomze_two-stage_2022} persists: instances from scheme 1 are regularly solved via $\CPI$ alone, while that is not the case for the scheme 2 instances. However, for these instances the solutions from the $\mathcal{DDC}$ approximation yield excellent bounds, revealing that both approximations are very good, albeit not quite good enough to solve the instances on a $0.01\%$ tolerance threshold. \blue{The feasible solutions produced by via $\mathcal{DDC}$ are only slightly better, than the ones produced by $\CPI$, and sometimes even worse.} We also note that $\CPI$ on average yields a much better lower bound than Gurobi does within the time limit, sometimes even certifying optimality of Gurobi's feasible solution. The upper bound provided by $\mathcal{DDC}$ performs \blue{worse} to Gurobi's upperbound, when measured relative to Gurobi's lower bound.    
	 
	Regarding $\FF_2$, we see that the conic gaps were narrowed quite substantially by the inner approximation and regularly closed. Gurobi on its own performed similarly except for the largest instance types, for which it was outperformed quite substantially. What is remarkable is the fact that the upper bounds of the approximations of the $\CMP$ relaxation seem to also upper bound Gurobi's lower bounds. Conversely, Gurobi's upper bounds seem to live close to the $\CPI$ based lower bounds on average. This might indicate that the $\CMP$ relaxation may in fact be tight despite the fact that \cref{thm:DecomposableQCQPExactCMPReformulation} is not applicable. We hope we can address this phenomenon in future research. Again, instances from scheme 2 seemed to be a greater challenge. Note that the smallest gaps are regularly produced by the feasible solution of the inner approximation.  
	
	Finally we can see that for $\FF_3$ the upper bound based on $\mathcal{CBC}$, unfortunately, performed quite poorly, which is surprising, given that the derivation of $\mathcal{CBC}$ is the one that is closest to classical results in matrix completion. On the brighter side, we see that the $\CPI$ produced good lower bounds that narrow the gap to Gurobi's feasible solution better than Gurobi itself.  
	
	We also recorded the average time spent on the different approaches in \cref{tbl:TimesTable}. We decomposed these running times into the time the respective solver used to produce the bounds (solver time), the internal model-building time reported by Yalmip (yalmip time) and the time our implementation used for building the model that is passed to Yalmip (model time). We like to point out a couple of things. Firstly, for the majority of the instance types Gurobi ran into the time limit on average. Also, on average the $\mathcal{DDC}$ approximations are more demanding for Mosek than the other approximations. Still, the models were solved quite quickly, certainly quicker than 5 minutes. Hence, our methods can produce good bounds with reasonable effort. Finally we would like to point out that there are also spikes in the model time for some of the inner approximations. This, however, is an artifact of our implementation that can potentially be avoided with better programming.   
	\blue{
	\subsection{Restricting Gurobi's time limit to the conic solution time} \label{sec:Restricting Gurobi's time limit to the conic solution time}
	
	For this experiment we used the same setup as in the previous section, but this time Gurobi's time limit was set to the time it took the conic solver to produce both the solutions of the inner and the outer approximations. This was done on a per instance basis, but the average solution times can be gathered from \cref{tbl:TimesTable}. The results are summarized in \cref{tbl:TimeLimit}, where the nomenclature is as in the previous section.  
	
	\begin{table}[htbp]
		\begin{center}
			\begin{tabular}{rl|ccc|cc}
				&\multirow{2}{*}{\textbf{Instance-types}}	& \multicolumn{3}{c|}{\textbf{Conic Gaps}} & \multicolumn{2}{c}{\textbf{Gurobi Gaps}} \\
				&  &  {UB} & {I} & {IUB} & {G} & {O} \\ 
				\hline
				\hline
				\multirow{8}{*}{$\FF_1$}
				&10\_10\_10\_1 & \textbf{0,00}  & (2,57)  & 2,19  & 71,01  & 16,60 \\ 
				&20\_20\_20\_1 & \textbf{0,00}  & (3,42)  & 3,21  & 175,97 & 37,68 \\ 
				&2\_10\_10\_1  & \textbf{0,00}  & 18,36 & 17,66 & 176,46 & 18,25 \\ 
				&2\_5\_5\_1    & \textbf{0,00}  & 16,97 & 16,21 & 33,80  & 4,36 \\ \cline{2-7}
				&10\_10\_10\_2 & 39,57 & ({0,29})  & \textbf{36,92} & 185,61 & 53,12 \\ 
				&20\_20\_20\_2 & 67,37 & ({0,32})  & \textbf{46,35} & 359,74 & 67,32 \\ 
				&2\_10\_10\_2  & 1,66  & \textbf{0,25}  & 23,42 & 100,60 & 25,00 \\ 
				&2\_5\_5\_2    & 2,53  & \textbf{0,18}  & 18,67 & 77,26  & 16,29 \\ 
				\hline
				\hline
				\multirow{4}{*}{$\FF_2$}
				&3\_10\_3\_1 & \textbf{0,00} & \textbf{0,00} & \textbf{0,00} & 4,43$^*$ & 0,44$^*$ \\ 
				&3\_5\_3\_1 & \textbf{0,00} & \textbf{0,00} & \textbf{0,00} & \textbf{0,00} & \textbf{0,00} \\  \cline{2-7}
				&3\_10\_3\_2 & 2,66 & 2,66 & 2,66 & \textbf{2,17}$^*$ & 2,70$^*$ \\ 
				&3\_5\_3\_2 & 1,21 & 1,21 & 1,21 & \textbf{0,00} & 1,21 \\ 
			\end{tabular}
			\caption{Comparison with Gurobi under a time limit}
			\label{tbl:TimeLimit}
		\end{center}
	\end{table}
	
	The first thing, that stands out is that we omitted results on $\FF_3$ and on the larger instances of $\FF_2$. For these instances Gurobi was not able to obtain optimality gaps within the time limit because it could not find a feasible solution and/or was not able to produce a lower bound. The issue was not resolved even after the time limit was expanded by several seconds. For the gaps marked with an asterisk in \cref{tbl:TimeLimit}, the same issue was present, but was resolved by expanding the time limit by an additional second. Of course this biases the respective results in favor of Gurobi, but we decided to include these results anyway, since the question what a slight loosening of the time limit could achieve is also interesting. 
	
	From the results we did include we see several things. First, we observe that the gaps produced via the conic approach were often much better than what Gurobi could achieve in the same time. Only in some of the instances of $\FF_2$ did Gurobi show an advantage, but only in cases where Gurobi operated under an extended time limit. Moreover, the feasible solutions generated by the conic approach were of better quality on average than the ones produced by Gurobi. It is therefore highly plausible, that a conic pre-solver could benefit the running time of global solvers such as Gurobi.

	\subsection{Comparing the sparse and the full conic models}\label{sec:Comparing the sparse and the full conic models}
	
	In our final experiments, we calculated the lower bounds obtained from relaxations of the full model (\ref{eqn:DecomposableQCQPBurer}), where the entire completely positive constraint was present, rather than its sparse surrogate. In cases where the original space of variables was preserved, we also calculated the associated upper bounds. For models $\FF_2$ and $\FF_3$ we considered the full model to be given by (\ref{eqn:CMPRelaxationofF3}) and (\ref{eqn:F3CBC}) where the $\CMP$ constraint was replaced by the respective full $\CPP$ constraint. These conic constraints were approximated using \cite[Theorem 1]{natarajan_reduced_2017} as depicted in (\ref{eqn:Natarajan}), where the completely positive constraints on the submatrices were relaxed to doubly nonnegative constraints. The results are summarized in \cref{tbl:FullComparison}. 
	
	\begin{table}[htbp]
		\begin{center}
			\begin{tabular}{rl|cccc|cc}
				&\multirow{2}{*}{\textbf{Instance-types}}	& \multicolumn{4}{c|}{\textbf{Gaps}}  & \multicolumn{2}{c}{\textbf{\# Gaps}}  \\
				& & M & F & UB & I &  UB &  I \\ \hline \hline
				\multirow{6}{*}{$\FF_1$}
				&10\_10\_10\_1 & 0,00 & 0,00 & 0,00 & (2,57) & 10 & (0) \\ 
				&2\_10\_10\_1 & 0,00 & 0,00 & 0,00 & 18,36 & 10 & 0  \\ 
				&2\_5\_5\_1 & 0,00 & 0,00 & 0,00 & 16,97 & 10 & 0  \\ \cline{2-8}
				&10\_10\_10\_2 & 0,04 & 22,12 & 27,52 & (0,31) & 0 & (6) \\ 
				&2\_10\_10\_2 & 0,00 & 0,04 & 1,66 & 0,25 & 1 & 0  \\ 
				&2\_5\_5\_2 & 0,00 & 1,88 & 2,53 & 0,18 & 2 & 1  \\ 
				\hline \hline
				\multirow{6}{*}{$\FF_2$}
				&10\_10\_10\_1 & 0,00 & 0,11 & 0,01 & (0,01) & 8 & (8) \\ 
				&3\_10\_3\_1 & 0,00 & 0,36 & 0,00 & 0,00 & 9 & 9  \\ 
				&3\_5\_3\_1 & 0,00 & 5,48 & 0,00 & 0,00 & 6 & 6  \\ \cline{2-8}
				&10\_10\_10\_2 & 0,00 & 1,77 & 1,34 & (1,21) & 9 & (1)  \\ 
				&3\_10\_3\_2 & 0,00 & 5,73 & 2,66 & 2,66 & 8 & 8  \\ 
				&3\_5\_3\_2 & 0,00 & 2,73 & 1,21 & 1,21 & 9 & 9  \\ 
				\hline
				\hline
				\multirow{6}{*}{$\FF_3$}&10\_10\_10\_1 & 0,00 & - & - & - & - & -  \\ 
				&10\_3\_10\_1 & 0,00 & - & - & - & - & - \\ 
				&5\_3\_5\_1 & 0,00 & - & - & - & - & -  \\  \cline{2-8}
				&10\_10\_10\_2 & 0,00 & - & - & - & - & -  \\ 
				&10\_3\_10\_2 & 0,00 & - & - & - & - & -  \\ 
				&5\_3\_5\_2 & 0,00 & - & - & - & - & -\\ 
			\end{tabular}
			\caption{Comparison with the full model}
			\label{tbl:FullComparison}
		\end{center}
	\end{table}

	In the first column (M) we report the gap between the lower bound from the full model and the lower bound based on $\CPI$. The second one (F) gives the gap between the lower bound of the full model and the upper bound associated with the feasible solution that the full model produced. The other two columns (UB, I) are calculated as in the other experiments. The final two columns report the number of times the optimality gap of the full model was worse than the one produced by either $\CPI$ alone (UB) or by $\CPI$ and the respective inner approximation (I). 
	
	We see that there is almost no difference between the lower bounds generated by the full and the sparse models. This is an observation already made for $\FF_1$ in \cite{bomze_two-stage_2022}, and it is replicated here for $\FF_2$ and $\FF_3$ as well. However, for the upper bounds the picture is more complicated. For instances of $\FF_1$ generated with scheme 1 the upper bounds from the full model and $\CPI$ are similar while the upper bounds from the inner approximation lack behind. But the later at least sometimes produces superior upperbounds when it comes to instances produced by scheme 2. For instances of $\FF_2$ the upperbounds produced by the sparse models are often better than what the full model is capable of producing. For $\FF_3$ no upperbounds were produced by the full model since the space of the original variables is absent. Thus, we also omitted the other upper bounds since there was nothing to compare them to.  
	
	As in \cite{bomze_two-stage_2022}, we again see that the bounds based on $\CPI$ can be computed much faster than the bounds from the full model. This holds true for the newly tested instances of $\FF_2$ and $\FF_3$ as well. On top of that, the same holds true for the inner approximations to the point where it is almost always faster to evaluate both the inner and the outer sparse approximations than it is to solve the full model. Disadvantages exist for the overall model building time, but this is a matter of implementation that is avoidable, at least in principle, by better implementations.   
	
	\begin{table}[htbp]
		\begin{tabular}{rl|rrr|rrr|rrr}
			&\multirow{2}{*}{\textbf{Instance-types}} & \multicolumn{3}{|c}{\textbf{Solver Time}} & \multicolumn{3}{|c}{\textbf{Yalmip Time}} & \multicolumn{3}{|c}{\textbf{Model Time}} \\
			&  & Full & Inner & Outer & Full & Inner & Outer & Full & Inner & Outer \\ \hline \hline
			\multirow{6}{*}{$\FF_2$}&10\_10\_10\_1 & 17,973 & 0,141 & 0,157 & 0,100 & 0,106 & 0,090 & 0,098 & 0,166 & 0,089 \\ 
			&3\_10\_3\_1 & 0,051 & 0,012 & 0,012 & 0,080 & 0,080 & 0,084 & 0,027 & 0,042 & 0,025 \\ 
			&3\_5\_3\_1 & 0,011 & 0,006 & 0,006 & 0,077 & 0,078 & 0,077 & 0,026 & 0,041 & 0,025 \\ \cline{2-11}
			&10\_10\_10\_2 & 41,611 & 0,130 & 0,296 & 0,088 & 0,092 & 0,085 & 0,081 & 0,140 & 0,082 \\ 
			&3\_10\_3\_2 & 0,188 & 0,014 & 0,044 & 0,077 & 0,078 & 0,077 & 0,026 & 0,042 & 0,024 \\ 
			&3\_5\_3\_2 & 0,025 & 0,007 & 0,013 & 0,077 & 0,085 & 0,077 & 0,025 & 0,041 & 0,024 \\ 
			\hline
			\hline
			\multirow{6}{*}{$\FF_3$}&10\_10\_10\_1 & 18,368 & 0,170 & 0,093 & 0,112 & 0,190 & 0,102 & 0,090 & 0,765 & 0,064 \\ 
			&10\_3\_10\_1 & 0,170 & 0,062 & 0,027 & 0,114 & 0,327 & 0,147 & 0,069 & 0,631 & 0,061 \\ 
			&5\_3\_5\_1 & 0,016 & 0,010 & 0,008 & 0,102 & 0,117 & 0,103 & 0,037 & 0,146 & 0,030 \\ \cline{2-11}
			&10\_10\_10\_2 & 14,511 & 0,161 & 0,089 & 0,091 & 0,156 & 0,085 & 0,066 & 0,542 & 0,051 \\ 
			&10\_3\_10\_2 & 0,161 & 0,049 & 0,031 & 0,105 & 0,171 & 0,103 & 0,061 & 0,541 & 0,059 \\ 
			&5\_3\_5\_2 & 0,017 & 0,012 & 0,008 & 0,107 & 0,116 & 0,107 & 0,035 & 0,143 & 0,028 \\ 
		\end{tabular}
		\caption{Running times including the full model }
		\label{}
	\end{table}
	
}
	
	\section*{Conclusion}
	In this text we presented a new approach to sparse conic optimization based on a generalization of the set-completely positive matrix cone, which was motivated by the study of the two-stage stochastic standard quadratic optimization problem. Using innner and outer approximations of said cone allows for certificates of exactness of a sparsification outside of traditional matrix completion approaches. We demonstrate in numerical experiments, that this approach can close or at least narrow the optimality gap in interesting cases. We think that this provides a \blue{proof} of concept that may motivate future research. Interesting questions remain, for example, about the quality of the inner approximations and whether they can be proven to be exact for special cases. 
	 
	\subsubsection*{Data availability statement}
	
	The datasets generated and/or analyzed during the current study are available from the corresponding author on reasonable request.
	
\begin{landscape}
\begin{table}[htbp]
	\begin{tabular}{ll|rrr|rrrrr|cccc|ccc}
		& \multirow{2}{*}{\textbf{Instance-types}} & \multicolumn{3}{c|}{\textbf{Conic Gaps}} & \multicolumn{5}{c|}{\textbf{Gurobi Gaps}} & \multicolumn{4}{c|}{\textbf{\# Gurobi Gaps}} & \multicolumn{3}{c}{\textbf{\#  Conic Gaps}}\\
		&  & \multicolumn{1}{c}{UB} & \multicolumn{1}{c}{I} & \multicolumn{1}{c|}{IUB} & \multicolumn{1}{c}{G} & \multicolumn{1}{c}{UB} & \multicolumn{1}{c}{I} & \multicolumn{1}{c}{IUB} & \multicolumn{1}{c|}{O} & UB & I & IUB & O & UB & I & IUB \\ 
		\hline
		\hline
		\multirow{8}{*}{$\FF_1$}
		&10\_10\_10\_1 & \textbf{0,00} & (2,57)  & 2,19    & 27,20  & 25,59 & (28,93)  & 28,44  & \textbf{1,27}  & 0 & 0 & (0) & 0 & \textbf{10} & (0) & 0 \\ 
		&20\_20\_20\_1 & \textbf{0,00} & (3,42)  & 3,21    & 164,45 & 95,96 & (102,58) & 102,19 & \textbf{35,32} & 0 & 0 & (0) & 0 & \textbf{10} & (0) & 0 \\ 
		&2\_10\_10\_1  & \textbf{0,00} & 18,36 & 17,66   & 23,16  & 21,63 & 44,12  & 43,31  & \textbf{1,24}  & 0 & 0 & 0 & 0 & \textbf{10} & 0 & 0 \\ 
		&2\_5\_5\_1    & \textbf{0,00} & 16,97 & 16,21   & 6,68   & 6,58  & 24,76  & 23,97  & \textbf{0,09}  & 0 & 0 & 0 & \textbf{1} & \textbf{10} & 0 & 0 \\ 
		\cline{2-17}
		&10\_10\_10\_2 & 39,57 & (0,29) & \textbf{36,92}   & 40,36  & 94,88  & (40,36) & 91,35  & \textbf{0,30}    & 0 & 0 & (0) & 0     & 0 & (0) & 0 \\ 
		&20\_20\_20\_2 & 67,37 & (0,32) & \textbf{46,35}   & 117,23 & 216,92 & (89,96) & 177,13 & \textbf{14,72}   & 0 & 0 & (0) & 0     & 0 & (0) & 0 \\ 
		&2\_10\_10\_2  & 1,66  & \textbf{0,25} & 23,42   & 0,48   & 2,15   & 0,73  & 24,04  & \textbf{0,00}    & 0 & 0 & 0 & \textbf{10}    & \textbf{1} & 0 & 0 \\ 
		&2\_5\_5\_2    & 2,53  & \textbf{0,18} & 18,67   & 0,08   & 2,61   & 0,25  & 18,77  & \textbf{0,00}    & 0 & 0 & 1 & \textbf{10}    & \textbf{1} & 0 & \textbf{1} \\ 
		\hline
		\hline
		\multirow{8}{*}{$\FF_2$}& 
		 10\_10\_10\_1  & \textbf{0,01} & ({0,01}) & \textbf{0,01}    & \textbf{0,01} & \textbf{0,01} & (0,01) & \textbf{0,01} & \textbf{0,01}    & 6 & 0 & (6) & \textbf{8}    & \textbf{8} & (8) & \textbf{8} \\ 
		 &20\_20\_20\_1 & \textbf{0,00} & ({0,00}) & \textbf{0,00}    & 4,53 & 4,35 & (4,35) & 4,35 & 0,17    & 0 & 0 & (0) & 0    & \textbf{9} & (10)& \textbf{9} \\ 
		 &3\_10\_3\_1   & \textbf{0,00} & \textbf{0,00} & \textbf{0,00}    & 0,01 & 0,01 & 0,01   & 0,01 & \textbf{0,00}    & 7 & 7 & 7 & \textbf{10}   & 10& 10& 10 \\ 
		 &3\_5\_3\_1    & \textbf{0,00} & \textbf{0,00} & \textbf{0,00}    & \textbf{0,00} & \textbf{0,00} & 0,01   & \textbf{0,00} & \textbf{0,00}    & \textbf{10}& 9 & \textbf{10}& \textbf{10}   & \textbf{10}& 9 & \textbf{10} \\ 
		\cline{2-17}
		&10\_10\_10\_2 & 1,63 & (1,48) & \textbf{1,48}    & \textbf{0,02}  & 0,16  & (0,01)  & \textbf{0,02}  & 1,48    & 1  & \textbf{5}  & (6)  & 2    & \textbf{2} & (2) & \textbf{2 }\\ 
		&20\_20\_20\_2 & 0,53 & (0,48) & \textbf{ 0,51}    & 12,20 & 11,09 & (11,03) & 11,07 & \textbf{1,54}    & 0  & 0  & (0)  & 0    & \textbf{3} & (3) & \textbf{3} \\ 
		&3\_10\_3\_2   & \textbf{2,66} & \textbf{2,66} & \textbf{2,66}    & \textbf{0,01}  & \textbf{0,01}  & \textbf{0,01}  & \textbf{0,01}    & 2,66    & 8  & \textbf{10} & 7    & 3    & \textbf{3} & \textbf{3}   & \textbf{3} \\ 
		&3\_5\_3\_2    & \textbf{1,21} & \textbf{1,21} &\textbf{ 1,21}    & \textbf{0,00}  & \textbf{0,00}  & \textbf{0,00}  & \textbf{0,00}    & 1,21    & \textbf{10} & \textbf{10} & \textbf{10}   & 6    & \textbf{6} & \textbf{6}   & \textbf{6} \\ 
		\hline
		\hline
		\multirow{8}{*}{$\FF_3$}
		& 10\_10\_10\_1 & - & 404,20& - & 1,97 & - & 412,79 & - & \textbf{0,27} & - & 0 & - & 0 & -&0 &- \\ 
		& 20\_20\_20\_1 & - & 484,57 & - & 1302,01 & - & 954,89 & - & \textbf{675,72} & - & 0 & - & 0 & - & 0 & - \\ 
		& 10\_3\_10\_1 & - & 907,85 & - & 0,62 & - & 914,15 & - & \textbf{0,00} & - & 0 & - & \textbf{10} & -&0 & -\\ 
		& 5\_3\_5\_1 & - & 257,07 & - & 0,21 & - & 257,83 & - & \textbf{0,00} & - & 0 & - & \textbf{10} & - & 0 & -\\ 
		\cline{2-17}
		& 10\_10\_10\_2 & - & 149,54 & - & 1,79 & - & 153,92 & - & \textbf{0,04} & - & 0 & - & 0 & -&0 & -\\ 
		& 20\_20\_20\_2 & - & 278,50 & - & 1909,44 & - & 304,09 & - &\textbf{ 1781,02} & - & 0 & - & 0 & -&0 & -\\ 
		& 10\_3\_10\_2 & - & 164,45 & - & 0,62 & - & 166,06 & - & \textbf{0,01} & - & 0 & - & \textbf{5} & -&0 &- \\ 
		& 5\_3\_5\_2 & - & 60,58 & - & 0,22 & - & 60,93 & - & \textbf{0,00} & - & 0 & - & \textbf{10} & -&0 & -\\ 
	\end{tabular}
	\caption{Results on the quality of bounds}
	\label{tbl:GapsTable}
\end{table}
\end{landscape}

\begin{landscape}
\begin{table}[htbp]
	\begin{tabular}{ll|rrr|rrr|rrr|rrr}
		& \multirow{2}{*}{\textbf{Instance-types}}  & \multicolumn{3}{c|}{\textbf{Solver time}}   & \multicolumn{3}{c|} {\textbf{Yalmip time}}   & \multicolumn{3}{c|}{\textbf{Model time}} &  \multicolumn{3}{c}{\textbf{Total} }  \\ 
		&  & \multicolumn{1}{c}{Exact} & \multicolumn{1}{c}{Inner} & \multicolumn{1}{c|}{Outer} & \multicolumn{1}{c}{Exact} & \multicolumn{1}{c}{Inner} & \multicolumn{1}{c|}{Outer} & \multicolumn{1}{c}{Exact} & \multicolumn{1}{c}{Inner} & \multicolumn{1}{c|}{Outer} & \multicolumn{1}{c}{Exact} & \multicolumn{1}{c}{Inner} &\multicolumn{1}{c}{Outer} \\ 
		\hline
		\hline
		\multirow{8}{*}{$\FF_1$}& 10\_10\_10\_1 & 300,088 & 1,269 & 0,149 & 0,126 & 0,280 & 0,132 & 0,229 & 1,419 & 0,176 & 300,443 & 2,969&0,457 \\ 
		& 20\_20\_20\_1 & 300,076 & 54,426 & 4,898 & 0,191 & 1,880 & 0,262 & 2,550 & 27,699 & 0,968 & 302,817 & 84,004&6,128 \\ 
		& 2\_10\_10\_1 & 300,339 & 0,102 & 0,036 & 0,105 & 0,176 & 0,108 & 0,097 & 0,638 & 0,116 & 300,540 & 0,916&0,259 \\ 
		& 2\_5\_5\_1 & 300,630 & 0,026 & 0,010 & 0,084 & 0,100 & 0,088 & 0,029 & 0,152 & 0,050 & 300,742 & 0,278&0,147 \\ 
		\cline{2-14}
		& 10\_10\_10\_2 & 300,052 & 1,093 & 0,159 & 0,098 & 0,228 & 0,106 & 0,176 & 1,072 & 0,148 & 300,326 & 2,394&0,412 \\ 
		& 20\_20\_20\_2 & 300,169 & 72,279 & 4,340 & 0,035 & 1,685 & 0,199 & 2,265 & 25,130 & 0,806 & 302,469 & 99,094&5,346 \\ 
		& 2\_10\_10\_2 & 300,767 & 0,103 & 0,043 & 0,102 & 0,203 & 0,111 & 0,118 & 0,710 & 0,130 & 300,987 & 1,016&0,283 \\ 
		& 2\_5\_5\_2 & 260,024 & 0,022 & 0,011 & 0,100 & 0,116 & 0,101 & 0,046 & 0,173 & 0,055 & 260,170 & 0,311&0,167 \\ 
		\hline
		\hline
		\multirow{8}{*}{$\FF_2$}& 10\_10\_10\_1 & 102,145 & 0,137 & 0,132 & 0,141 & 0,108 & 0,087 & 0,432 & 0,146 & 0,088 & 102,718 & 0,391&0,308 \\ 
		& 20\_20\_20\_1 & 311,688 & 4,922 & 4,596 & 0,882 & 0,358 & 0,117 & 9,657 & 0,600 & 0,291 & 322,228 & 5,879&5,004 \\ 
		& 3\_10\_3\_1 & 31,109 & 0,013 & 0,014 & 0,103 & 0,084 & 0,085 & 0,079 & 0,046 & 0,030 & 31,291 & 0,143&0,128 \\ 
		& 3\_5\_3\_1 & 0,298 & 0,006 & 0,008 & 0,086 & 0,081 & 0,080 & 0,042 & 0,043 & 0,026 & 0,427 & 0,129&0,113 \\ 
		\cline{2-14}
		& 20\_20\_20\_2 & 302,046 & 4,701 & 8,235 & 0,844 & 0,350 & 0,114 & 9,204 & 0,550 & 0,244 & 312,094 & 5,600&8,593 \\ 
		& 10\_10\_10\_2 & 168,211 & 0,148 & 0,216 & 0,146 & 0,112 & 0,089 & 0,459 & 0,149 & 0,087 & 168,816 & 0,409&0,392 \\ 
		& 3\_10\_3\_2 & 12,291 & 0,016 & 0,045 & 0,090 & 0,083 & 0,083 & 0,056 & 0,044 & 0,027 & 12,437 & 0,142&0,155 \\ 
		& 3\_5\_3\_2 & 0,130 & 0,007 & 0,015 & 0,088 & 0,080 & 0,082 & 0,035 & 0,044 & 0,026 & 0,252 & 0,131&0,122 \\ 
		\hline
		\hline
		\multirow{8}{*}{$\FF_3$}& 10\_10\_10\_1 & 300,266 & 0,247 & 0,116 & 0,102 & 0,216 & 0,097 & 0,221 & 1,294 & 0,056 & 300,589 & 1,758&0,269 \\ 
		& 20\_20\_20\_1 & 300,395 & 6,348 & 4,149 & 0,190 & 1,554 & 0,178 & 3,291 & 31,444 & 0,256 & 303,876 & 39,346&4,583 \\ 
		& 10\_3\_10\_1 & 300,270 & 0,062 & 0,027 & 0,099 & 0,193 & 0,092 & 0,134 & 0,940 & 0,050 & 300,503 & 1,194&0,169 \\ 
		& 5\_3\_5\_1 & 300,432 & 0,014 & 0,008 & 0,100 & 0,121 & 0,092 & 0,032 & 0,188 & 0,027 & 300,564 & 0,324&0,127 \\ 
		\cline{2-14}
		& 10\_10\_10\_2 & 300,194 & 0,279 & 0,125 & 0,098 & 0,224 & 0,104 & 0,221 & 1,416 & 0,056 & 300,513 & 1,919&0,284 \\ 
		& 20\_20\_20\_2 & 300,096 & 4,593 & 2,919 & 0,169 & 1,373 & 0,161 & 2,822 & 22,726 & 0,173 & 303,087 & 28,692 & 3,253 \\ 
		& 10\_3\_10\_2 & 300,422 & 0,074 & 0,032 & 0,103 & 0,207 & 0,100 & 0,118 & 1,015 & 0,057 & 300,643 & 1,295&0,189 \\ 
		& 5\_3\_5\_2 & 300,451 & 0,024 & 0,014 & 0,134 & 0,154 & 0,124 & 0,038 & 0,255 & 0,036 & 300,624 & 0,433&0,173 \\ 
	\end{tabular}
	\caption{Running times of the models }
	\label{tbl:TimesTable}
\end{table}
\end{landscape}

	\blue{
	\paragraph{Acknowledgments:} This publication was supported by the Institute of Operations Research at the Karlsruhe Institute of Technology (KIT). We also want acknowledge the members of the Vienna Graduate School of Computational Optimization (VGSCO) for their valuable discussion and input, in particular Prof. Günther Raidl, who suggested the experiments conducted in \cref{sec:Restricting Gurobi's time limit to the conic solution time}. Finally, we want to thank the anonymous referees for there diligence and for suggesting the experiments conducted in \cref{sec:Comparing the sparse and the full conic models}.
	}
	
	\bibliography{Literature}
	\bibliographystyle{abbrv}	
	\appendix

\end{document}